\newtheorem{thm}{Theorem}[section]
\newtheorem*{thm*}{Theorem}
\newtheorem{cor}[thm]{Corollary}
\newtheorem*{cor*}{Corollary}
\newtheorem{lem}[thm]{Lemma}
\newtheorem*{lem*}{Lemma}
\newtheorem{prop}[thm]{Proposition}
\newtheorem*{prop*}{Proposition}
\theoremstyle{definition}
\newtheorem{defn}[thm]{Definition}
\newtheorem*{defn*}{Definition}
\newtheorem*{conjecture*}{Conjecture}
\newtheorem*{condition*}{Condition}
\newtheorem*{assumption*}{Assumption}
\theoremstyle{remark}
\newtheorem{rem}[thm]{Remark}
\newtheorem*{rem*}{Remark}
\newtheorem*{problem*}{Problem}
\numberwithin{equation}{section}
\newcommand{\BQ}{\mathbb Q}
\newcommand{\BG}{\mathbb G}
\newcommand{\BC}{\mathbb C}
\newcommand{\BN}{\mathbb N}
\newcommand{\BA}{\mathbb A}
\newcommand{\BZ}{\mathbb Z}
\newcommand{\BP}{\mathbb P}
\newcommand{\Bmu}{{\pmb \mu}}
\newcommand{\Bk}{\mathbf k}
\newcommand{\CE}{\mathcal E}
\newcommand{\CF}{\mathcal F}
\newcommand{\CG}{\mathcal G}
\newcommand{\CO}{\mathcal O}
\newcommand{\CH}{\mathcal H}
\newcommand{\CT}{\mathcal T}
\newcommand{\CM}{\mathcal M}
\newcommand{\CX}{\mathcal X}
\newcommand{\CZ}{\mathcal Z}
\newcommand{\V}{\mathbf{V}}
\newcommand{\bfF}{\mathbf{F}}
\newcommand{\ev}{\mathrm {ev}}
\newcommand{\wt}{\widetilde}
\newcommand{\ol}{\overline}
\newcommand{\eps}{\epsilon}
\newcommand{\veps}{\varepsilon}
\newcommand{\parab}{\mathrm{par}}
\newcommand{\vir}{\mathrm{vir}}
\newcommand{\pure}{\mathrm{pure}}
\newcommand{\taut}{\mathrm{taut}}
\newcommand{\ellipt}{\mathrm{ell}}
\newcommand{\parell}{\mathrm{parell}}
\newcommand{\red}{\mathrm{red}}
\newcommand{\unred}{\mathrm{unred}}
\newcommand{\Mod}{\mathrm{Mod}}
\newcommand{\frakd}{\mathfrak{d}}
\newcommand{\frakh}{\mathfrak{h}}
\newcommand{\frake}{\mathfrak{e}}
\newcommand{\frakf}{\mathfrak{f}}
\newcommand{\Betti}{\mathrm{Betti}}
\newcommand{\Wred}{W_M}
\DeclareMathOperator{\Gr}{Gr}
\DeclareMathOperator{\Ad}{Ad}
\DeclareMathOperator{\ch}{ch}
\DeclareMathOperator{\GL}{GL}
\DeclareMathOperator{\Sp}{Sp}
\DeclareMathOperator{\liesl}{\mathfrak{sl}}
\DeclareMathOperator{\liegl}{\mathfrak{gl}}
\DeclareMathOperator{\PGL}{PGL}
\DeclareMathOperator{\Hom}{Hom}
\DeclareMathOperator{\Tot}{Tot}
\DeclareMathOperator{\Td}{Td}
\DeclareMathOperator{\Coh}{Coh}
\DeclareMathOperator{\res}{res}
\DeclareMathOperator{\trace}{Tr}
\DeclareMathOperator{\ASym}{ASym}
\DeclareMathOperator{\End}{End}
\DeclareMathOperator{\supp}{supp}
\DeclareMathOperator{\kernel}{Ker}
\DeclareMathOperator{\image}{Im}
\DeclareMathOperator{\cokernel}{Coker}
\DeclareMathOperator{\Id}{Id}
\DeclareMathOperator{\rank}{rk}
\title{$P=W$ via $\CH_2$}
\author{Tam\'as Hausel}
\address[Tam\'as Hausel]{IST Austria, Klosterneuburg, Austria}
\email{tamas.hausel@ist.ac.at}
\author{Anton Mellit}
\address[Anton Mellit]{Faculty of Mathematics, University of Vienna, Vienna, Austria}
\email{anton.mellit@univie.ac.at}
\author{Alexandre Minets}
\address[Alexandre Minets]{Max Planck Institute for Mathematics, Bonn, Germany}
\email{minets@mpim-bonn.mpg.de}
\author{Olivier Schiffmann}
\address[Olivier Schiffmann]{Laboratoire de Mathématiques d'Orsay, Université de Paris-Saclay, Orsay, France}
\email{olivier.schiffmann@universite-paris-saclay.fr}
\date{\today}
\begin{document}
	\onehalfspacing

	\begin{abstract}
		Let $\CH_2$ be the Lie algebra of polynomial Hamiltonian vector fields on the symplectic plane. Let $X$ be the moduli space of stable Higgs bundles of fixed relatively prime rank and degree, or more generally the moduli space of stable parabolic Higgs bundles of arbitrary rank and degree for a generic stability condition. Let $H^*(X)$ be the cohomology with rational coefficients. Using the operations of cup-product by tautological classes and Hecke correspondences we construct an action of $\CH_2$ on $H^*(X)[x,y]$, where $x$ and $y$ are formal variables. We show that the perverse filtration on $H^*(X)$ coincides with the filtration canonically associated to $\liesl_2\subset\CH_2$ and deduce the $P=W$ conjecture of de Cataldo-Hausel-Migliorini.
	\end{abstract}

	\maketitle
	

	\section{Introduction}
	
	\subsection{The $P=W$ conjecture.}

	Let $r>0$ and $d$ be relatively prime integers and let $C$ be a smooth projective algebraic curve over $\BC$. Following Hitchin \cite{hitchin1987self} and Simpson \cite{simpson1990harmonic,simpson1992higgs}, \emph{Higgs bundles} are pairs $(\CE, \theta)$ such that $\CE$ is an algebraic vector bundle on $C$ and $\theta:\CE\to \CE\otimes \Omega$, where $\Omega$ denotes the line bundle of differential forms on $C$. A Higgs bundle is \emph{stable} if for all proper subbundles $\CE'\subset \CE$ of rank $r'$ and degree $d'$ satisfying $\theta(\CE')\subset \CE'\otimes\Omega$ we have the inequality
	\[
	\frac{d'}{r'}< \frac{d}{r}.
	\]
    Let $M_{r,d}$ be the moduli space of stable Higgs bundles on $C$ of rank $r$ and degree $d$, which is a (smooth) quasi-projective symplectic variety of dimension $2(g-1)r^2+2$. By the non-abelian Hodge correspondence (\cite{hitchin1987self, simpson1991nonabelian, simpson1992higgs}), the space  $M_{r,d}$ is diffeomorphic to the moduli space $M_{r,d}^\Betti$ parameterizing certain twisted local systems on $C$. That latter space is an affine algebraic variety, but the complex structure is different from the one of $M_{r,d}$. By \cite{deligne1971theorie}, the cohomology with complex coefficients $H^*(M_{r,d}^\Betti)$ is equipped with a natural \emph{weight filtration}. It is natural to ask for a description of the weight filtration on $H^*(M_{r,d}) = H^*(M_{r,d}^\Betti)$ in terms of the algebraic geometry of the space $M_{r,d}$.
	
	A conjectural answer to this question was suggested in \cite{cataldo2012topology}. The moduli space $M_{r,d}$ is endowed with the \emph{Hitchin map} $\chi:M_{r,d}\to \BC^{(g-1)r^2+1}$ which reads off the coefficients of the characteristic polynomial of $\theta$. It turns out that $\chi$ is proper and applying the decomposition theorem \cite{beuilinson1982faisceaux} one constructs the perverse filtration, starting in degree $N=(g-1)r^2+1$. The $P=W$ conjecture then claims that the weight filtration and the perverse filtration are essentially equal:
\begin{conjecture*}[$P=W$,~\cite{cataldo2012topology}]
For $i=0,1,\ldots,(g-1)r^2+2$, we have $$P_{i-N} H^*(M_{r,d}) = W_{2i} H^*(M_{r,d}^\Betti).$$
\end{conjecture*}
In this paper, we prove this conjecture as well as its analog in which one replaces stable Higgs bundles by stable parabolic Higgs bundles. Our approach goes through a variant of the $P=W$ conjecture knows as the $P=C$ conjecture. As proven by \cite{markman2002generators}, the cohomology ring $H^*(M_{r,d})$ is generated by the tautological classes, which are the components of the K\"unneth decomposition of the Chern classes of the tautological sheaf on $M_{r,d}\times C$. By \cite{shende2016weights} the weight filtration may be entirely described in terms of tautological classes. Namely, assign a weight to each tautological class so that the classes coming from the $i$-th Chern class have weight $2i$. Then $W_m$ is spanned by products of tautological classes of total weight $\leq m$.
	
Our main result shows that the same kind of description holds for $P$:
\begin{thm*}[$P=C$ conjecture]
The subspace $P_m H^*(M_{r,d})$ is the span of products of tautological classes of total weight $\leq 2(m + N)$, where $N=(g-1)r^2+1$.
\end{thm*}

This immediately implies
\begin{cor*}
The $P=W$ conjecture holds for the spaces $M_{r,d}$.
\end{cor*}

As mentioned above, we work in a more general context of moduli spaces of stable parabolic Higgs bundles of arbitrary rank and degree for a generic stability condition, and prove the corresponding result in that context. In fact, it is essential for our proof to consider both parabolic and non-parabolic setups at once. 
	
The $P=W$ conjecture in the case of rank $2$ was proved already in~\cite{cataldo2012topology}. The case when $C$ has genus $2$ was established in~\cite{cataldo2022hitchin}. While the present paper was in the final stages of preparation, we learned of a proof of $P=W$ by Maulik and Shen using a different approach, see \cite{maulik2022p}. Since then, a third proof has appeared, see~\cite{maulik2023}. We refer the interested reader to \cite{hoskins23} for a survey and comparison of these proofs.
	
\subsection{The algebra $\CH_2$.} Let $M$ be the moduli space $M_{r,d}$, or more generally the moduli space of parabolic Higgs bundles. Our proof of $P=W$ proceeds by constructing an action of an interesting algebra on $H^*(M)$.
	
The Lie algebra $\CH_2$ of polynomial Hamiltonian vector fields on the plane with respect to the standard symplectic form has the following description. A basis is given by the fields $V_{m,n}$ with Hamiltonian $x^m y^n$; explicitly, we have
	\[
	V_{m,n} = n y^{n-1} x^{m} \frac{\partial}{\partial x} - m x^{m-1} y^{n} \frac{\partial}{\partial y}.
	\]
	The Lie bracket is given as follows:
	\[
	[V_{m,n}, V_{m', n'}] = (m'n - m n') V_{m+m'-1, n+n'-1}.
	\]
	
	It is expected by physicists (Lev Rozansky, private communication) that $\CH_2$ acts on many geometric invariants. For instance, in \cite{gorsky2021tautological}, in order to prove a Lefschetz property, certain operators acting on the so-called $y$-ified Khovanov-Rozansky homologies of links were constructed and it was speculated that these operations satisfy the relations of $\CH_2$. 
	This and some other similarities with the situation of homologies of links gave us a hint that $\CH_2$ may act on $H^*(M)$.	
	It is easy to check however, that $\CH_2$ does not have any non-zero finite-dimensional representation, so there is no hope that it acts directly on $H^*(M)$. Nevertheless, $\CH_2$ \textit{does} naturally act on the infinite-dimensional space $H^*(M)[x,y]$, where $x$ and $y$ are formal variables satisfying some natural commutation relations with $V_{m,n}$.
	
	
\subsection{The main construction}
	The construction of the action of $\CH_2$ begins with an explicit presentation of the cohomological Hall algebra (CoHA) of zero-dimensional sheaves on a smooth surface. This algebra describes the relations satisfied by all (punctual) Hecke correspondences on a given surface $S$. As shown in~\cite{MMSV}, it is isomorphic to a deformed $W_{1+\infty}$-algebra, modeled on the cohomology ring $H^*(S)$. The presentation drastically simplifies when the surface satisfies some cohomological vanishing properties, such as for open symplectic surfaces (which is the case of interest for Higgs bundles). 
	
	Now let $S=T^*C$ for $C$ a smooth complex projective curve. The CoHA of zero-dimensional sheaves on $S$ naturally acts on the (co)homology of the stack $\mathcal{C}oh_r=\bigsqcup_d \mathcal{C}oh_{r,d}$ of all Higgs bundles on $C$ (of fixed rank and varying degree), but as Hecke correspondences do not preserve (semi)stablity, there is no natural action on the cohomology of $M_r =\bigsqcup_{d} M_{r,d}$ or its parabolic version, much less so on each individual $H^*(M_{r,d})$. To circumvent this problem, we consider the \emph{elliptic locus} $M_{r}^\ellipt \subset M_{r}$, where the spectral curve is reduced and irreducible (which automatically implies stability). This property being preserved under Hecke correspondences, the \emph{Hecke operators} are well defined on $H^*(M^\ellipt_r)$, see Section \ref{sec:hecke}. Using \cite{MMSV}, the Hecke operators and the commutation relations between them and the tautological classes are computed explicitly. In particular, the action on the subring $H^*_\taut(M_r^\ellipt)\subset H^*(M_r^\ellipt)$ generated by the tautological classes is described in terms of the so-called Fock space representation (see Theorems~\ref{thm:surface W relations}, \ref{thm:W undeformed} formulated in a more general case of sheaves on a surface, and Corollary~\ref{cor:trigonometric} and Theorem~\ref{thm:rational parabolic} for Higgs bundles).

	Once we know the commutation relations between the Hecke operators, we use a specific degree $1$ Hecke operator to canonically identify the cohomology groups of $M_{r,d}^\ellipt$ for all values of the degree $d$, and then follow a certain degeneration procedure, analogous to the passage from the trigonometric Cherednik algebra to its rational version, see Section~\ref{sec:sl2}. We define in this way a family of operators on $H^*(X_{r,d}^\ellipt)$ for any \textit{fixed} $d$, satisfying relations which look almost like those of $\CH_2$, see Corollary~\ref{cor:rational reduced} and Proposition~\ref{prop:rational reduced twice}.
	
	This leads us to the construction of an $\liesl_2$-triple $(\frake,\frakh,\frakf)$ acting on $H^*(M_{r,d}^\ellipt)$, see Proposition~\ref{prop:sl2}. The operator $\frake$ is the multiplication by a tautological class, while $\frakf$ is a complicated Hecke operator. After applying a suitable gauge transformation (Proposition~\ref{prop:many sl_2})  we may even assume that $\frake$ is the multiplication by the class of a divisor ample relatively to the Hitchin morphism. It easily follows that the perverse filtration matches the natural filtration coming from the action of this $\liesl_2$ (Proposition~\ref{prop:P and sl2}). We deduce that the $P=C$ conjecture holds for the pure part of the cohomology of the elliptic locus (Theorem~\ref{prop:PW elliptic}).
	
	In the rest of Section~\ref{sec:PW} we show how the results for the elliptic locus of the parabolic moduli space imply the desired $P=C$ statement for the entire space of stable Higgs bundles. This goes through several reduction steps: from elliptic parabolic Higgs bundles to stable parabolic Higgs bundles, to nilpotent stable parabolic Higgs bundles, and finally to stable Higgs bundles. We refer to Section~\ref{sec:PW1} for a summary.
	
	\subsection{Some remarks}
	(1) The fact that the perverse filtration is controlled by Hecke operators is reminiscent of the construction in~\cite{oblomkov2016geometric}. There, the authors consider an action of trigonometric Cherednik algebra on the cohomology of affine Springer fibers by Hecke correspondences, and degenerate it to an action of rational Cherednik algebra by taking the associated graded with respect to the perverse filtration; the latter is inherited from a realization of certain affine Springer fibers as Hitchin fibers. In our situation, the algebra generated by the operators $D_{m,n}$ can be viewed as a global version of the (spherical) trigonometric Cherednik algebra, and the passage to the operators $\widetilde{D}_{m,n}$ in Section \ref{sec:sl2} can be understood as the rational degeneration.
	
	(2) The most conceptual framework for the study of the algebras occuring in this paper is that of CoHAs of curves and surfaces, as considered in e.g. \cite{minets2020cohomological, sala2020cohomological, kapranov2019cohomological,davison2022hennecart}. We expect that using the powerful structural results of Davison and Kinjo on the Lie algebra version of CoHAs (work in progress) together with the computation of the zero-dimensional CoHA in \cite{MMSV} one should be able to streamline our proof of $P=W$ and, for instance, avoid the use of the elliptic locus or parabolic Higgs bundles.

    (3) Our action of $\liesl_2$ on $H^*(M_{r,d})$ gives rise to an action of the Weyl group $S_2$, which exchanges (by conjugation) the operators of multiplication by tautological classes and the Hecke operators, as expected by mirror symmetry. 

	(4) The Higgs bundles appearing here are sometimes called $\GL_r$-Higgs bundles. We expect that our results, including the algebra action and $P=W$ translate to the $\PGL_r$ case with only slight modifications.
    
	(5) The original motivation for the $P=W$ conjecture came from the observation that the E-polynomials of character varieties and the conjectural mixed Hodge polynomials have an interesting symmetry, which would be geometrically explained by the so-called \emph{curious Poincar\'e duality} or \emph{curious hard Lefschetz theorem}, see \cite{hausel2008mixed}, \cite{hausel2011arithmetic}. For the perverse filtration, the corresponding symmetry is explained by the relative hard Lefschetz theorem, see \cite{cataldo2005hodge}, \cite{cataldo2012topology}, and Theorem \ref{thm:decomposition} here. The curious hard Lefschetz theorem was established in \cite{mellit2019cell} by working on the Betti side. The present proof of $P=W$ now implies another, independent proof of the curious hard Lefschetz theorem (in the usual and the parabolic setups).
	
    \section{Topological prerequisites}\label{sec:topological}
    All the varieties we consider in this paper are defined over $\mathbb{C}$.
By a \emph{space} we mean an algebraic variety or a (classical) Artin stack $X$ of finite type. All our stacks are in fact of the form $\CX=X\times B\BG_m$ where $X$ is an algebraic variety. We work with the cohomology $H^*(X) = H^*(X,\BQ)$, which is a ring, and the Borel-Moore homology $H_*(X)=H_*(X,\BQ)$, which is a module over $H^*(X)$. We refer to \cite[Ch.~2]{CG} for standard properties of $H^*$ and $H_*$. If $X$ is of pure dimension $n$, we denote by $[X]\in H_{2n}(X)$ the fundamental class; it gives rise to a map
\[
H^i(X) \to H_{2n-i}(X),\qquad \alpha \to [X]\cap \alpha.
\]
When $X$ is smooth, this is an isomorphism, and we will sometimes use it implicitly to identify $H^*(X)$ and $H_{2n-*}(X)$. If $X$ is not smooth, consider a resolution of singularities $\pi:\wt X\to X$. By the projection formula, the above map factors as
\begin{equation}\label{Eq:projformula}
H^i(X) \xrightarrow{\pi^*} H^i(\wt X) \cong H_{2n-i}(\wt X) \xrightarrow{\pi_*} H_{2n-i}(X).
\end{equation}

If $X,Y$ are smooth spaces of dimensions $n,m$ respectively, and $f:X\to Y$ is proper, we have the Gysin map in cohomology
\[
H^i(X) \cong H_{2n-i}(X) \xrightarrow{f_*} H_{2n-i}(Y) \cong H^{2m-2n+i}(Y).
\]
We denote the Gysin map by $f_*$ as well.

\subsection{Correspondences}
Let $Z$ be a space of pure dimension $n$, together with maps $\pi_1:Z\to X$ and $\pi_2:Z\to Y$ such that $\pi_2$ is proper. We call this collection of data a \emph{correspondence}, and consider the \emph{induced map}
\[
H^i(X) \to H_{2n-i}(Y),\qquad \alpha \to \pi_{2*}([Z]\cap \pi_1^*\alpha).
\]
If $Y$ is smooth of dimension $m$ this gives a map $H^i(X) \to H^{2m-2n+i}(Y)$. 
The factorization~\eqref{Eq:projformula} implies that $Z$ can be replaced by a resolution of singularities without changing the induced map. 
If $X$ is proper, we can decompose the induced map as follows:
\[
H^i(X) \xrightarrow{\pi_X^*} H^i(X\times Y) \xrightarrow{(\pi_1\times\pi_2)_* [Z] \cap} H_{2n-i}(X\times Y) \xrightarrow{\pi_{Y*}} H_{2n-i}(Y).
\]
Notice that since $\pi_2$ is proper, $\pi_1\times\pi_2$ is also proper. If moreover $X$ and $Y$ are smooth, the class $(\pi_1\times\pi_2)_* [Z]$ corresponds to a class in $H^*(X\times Y)=H^*(X)\otimes H^*(Y)$, which using the Poincar\'e duality on $X$ and a choice of basis in $H^*(X), H^*(Y)$ can be identified with a matrix so that the induced map of the correspondence is the action of this matrix.

\subsection{Purity}
Let $X$ be a smooth space. Consider any smooth compactification $\ol X\supset X$. The \emph{pure part} $H_\pure^*(X) \subset H^*(X)$ is the image of the restriction map $H^*(\ol X)\to H^*(X)$. This definition is independent of the choice of a smooth compactification. Indeed, for any two smooth compactifications $\ol X$, $\ol X'$, let $\ol X''$ be a resolution of singularities of the closure of $X$ in $\ol X\times \ol X'$. Then we have a diagram
\[
\begin{tikzcd}
    & X \arrow{d}{\iota} \ar[r,equal] & X \arrow{d}{\pi_2\iota}\\
    \ol X & \ol X'' \ar[l,"\pi_1"'] \arrow {r}{\pi_2} & \ol X'
\end{tikzcd}
\]
with cartesian square, and therefore by base change we have $\iota^* = (\pi_2\iota)^* \pi_{2*}$. Thus
\[
\image (\pi_1\iota)^* \subset \image \iota^* = \image (\pi_2\iota)^* \pi_{2*} \subset \image (\pi_2\iota)^*,
\]
from which the result follows. 
The following claim is standard; we include the proof for completeness.
\begin{prop}
Let $Z \to X \times Y$ be a correspondence between smooth spaces. The induced map in cohomology preserves the pure part. 
In particular, if $X$ is proper then the image of the induced map is contained in $H^*_\pure(Y)$.
\end{prop}

\begin{proof} We may assume that $Z$ is smooth. Hence it is enough to prove that the pure part is preserved under pullbacks and proper pushforwards by maps between smooth spaces. Let $f: Z \to X$ be such a map. Pick a smooth compactification of $X$, a smooth compactification of $Z$, and resolve singularities of the closure of $Z$ in the corresponding product of compactifications to obtain a commutative diagram
\[
\begin{tikzcd}
    Z \arrow{d}{\iota_Z} \arrow{r}{f} & X \arrow{d}{\iota_X}\\
    \ol Z \arrow{r}{\ol f} & \ol X
\end{tikzcd}
\]
where $\iota_Z$, $\iota_X$ are smooth compactifications. Taking pullbacks implies that $f^* H_\pure^*(X) \subset H_\pure^*(Z)$.
Next, assume $f:Z \to Y$ is proper,  and consider a commutative square as above. The canonical map $j:Z\to \ol{Z} \times_{\ol Y} Y$ is both proper and an open embedding, hence it is an isomorphism, and the base change implies $f_* H_\pure^*(Z) \subset H_\pure^*(Y)$.
\end{proof}

Note in particular that the Chern classes of any vector bundle (or a bounded complex of vector bundles) on a smooth space $X$ are pure. 

\subsection{Diagonal class}
Let $X$ be smooth of pure dimension $n$ and let $\ol X\supset X$ be a smooth compactification. The identity map $X\to X$ factors as a composition of proper maps $X\to \ol X \times X \to X$. By~\eqref{Eq:projformula}, we can rewrite the restriction map $H^*(\ol X)\to H^*_\pure(X)$ as the composition
\[
H^*(\ol X) \xrightarrow{\pi_1^*} H^*(\ol X \times X) \xrightarrow{[\Delta]\cap} H^*(\ol X \times X) \xrightarrow{\pi_{2*}} H^*_\pure(X),
\]
where $\Delta\subset \ol X \times X$ is the diagonal. Thus the classes in $H^*_\pure(X)$ of any K\"unneth decomposition of $[\Delta]$ span $H^*_\pure(X)$.

\subsection{Virtual fundamental class}\label{sec:virtual fund class}
Suppose $X$ is smooth of dimension $n$ and let $\CE$ be a complex vector bundle on $X$ of rank $r$. We have the Thom class
\[
\tau_\CE\in H^{2r}(\Tot_\CE, \Tot_\CE\setminus X),
\]
where $\Tot_\CE$ is the total space of $\CE$. If $s:X\to \Tot_\CE$ is a section, by pullback we obtain a class in $H^{2r}(X, X\setminus Z) \cong H_{2n-2r}(Z)$, where $Z=X\cap s(X)$ is the zero set of $s$, which we call the \textit{virtual fundamental class} and denote $[Z]^\vir$. The image of $[Z]^\vir$ under the pushforward $H_*(Z)\to H_*(X)$ is $[X]\cap c_r(\CE)$. Note that the class $[Z]^\vir$ does depend on the realization of $Z$ as the zero set of a vector bundle. However, if $Z$ has pure dimension $n-r$ and the intersection $X\cap s(X)$ is generically transversal over $Z$, we have $[Z]^\vir=[Z]$. 


\section{Relative Lefschetz theory, perverse filtration and Hecke operators}\label{sec:Relative Hard Lefschetz}
In this section we review some notions of the relative Hard Lefschetz theorem of de Cataldo and Migliorini, and describe its behaviour with respect to correspondences. 

\subsection{Relative Lefschetz theory}
Before summarizing the relative Hard Lefschetz theorem from \cite{cataldo2005hodge} we need to develop some algebraic framework.
\begin{defn}
	A \textit{Lefschetz structure} is a finite dimensional vector space $V$ endowed with a linear endomorphism $\omega$ and a finite increasing filtration $P_\bullet V$ such that we have $\omega P_i V\subset P_{i+2} V$ for all $i$, and for all $k\geq 0$ the map
	\[
	\omega^k: P_{-k} V / P_{-k-1} V \to P_k V / P_{k-1} V
	\]
	is an isomorphism. A map of Lefschetz structures is a map of vector spaces compatible with the filtration and commuting with $\omega$.
\end{defn}
Denote by $\Gr V$ the associated graded
\[
\Gr V = \bigoplus_i \Gr_i V, \qquad \Gr_i V = P_i V / P_{i-1} V.
\]
This is a graded vector space on which $\omega$ induces an operator of degree $2$. A map of Lefschetz structures $\varphi:U\to V$ induces a map $\Gr \varphi: \Gr U \to \Gr V$. The following is well-known.
\begin{prop}
	The triple of a vector space $V$, an increasing filtration $P_\bullet V$ and $\omega:V\to V$ satisfying $\omega P_i V\subset P_{i+2} V$ is a Lefschetz structure if and only if there exists an action of $\liesl_2$ on $\Gr V$ for which $\frake = \omega$ and $\frakh$ acts on $\Gr_i V$ as the multiplication by $i$. For any map of Lefschetz structures $\varphi: U\to V$ the induced map $\Gr \varphi$ commutes with $\frakf$. In particular, the $\liesl_2$-action above is unique.\qed
\end{prop}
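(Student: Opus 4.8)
The plan is to reduce everything to a statement about the single graded space $\Gr V$ equipped with the degree-two operator induced by $\omega$, and then to invoke the standard $\liesl_2$-lemma. First I would observe that $P_{-k}V/P_{-k-1}V = \Gr_{-k} V$ and that the map $\omega^k$ occurring in the definition of a Lefschetz structure is precisely the $k$-th power of the induced degree-two operator $\frake := \omega$ on $\Gr V$, restricted to $\Gr_{-k} V$. Thus the Lefschetz condition is equivalent to the assertion that $\frake^k \colon \Gr_{-k} V \to \Gr_k V$ is an isomorphism for every $k \geq 0$. On the other hand, demanding that $\frakh$ act as multiplication by $i$ on $\Gr_i V$ forces $[\frakh, \frake] = 2\frake$ and $[\frakh, \frakf] = -2\frakf$ automatically by degree, so producing an $\liesl_2$-action with the prescribed $\frake, \frakh$ amounts exactly to producing an operator $\frakf$ of degree $-2$ with $[\frake, \frakf] = \frakh$. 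Hence it suffices to prove the purely algebraic claim: for a finite-dimensional graded space $W = \bigoplus_{i\in\BZ} W_i$ with $\frake \colon W_i \to W_{i+2}$ and $\frakh$ acting by $i$ on $W_i$, such an $\frakf$ exists if and only if $\frake^k \colon W_{-k} \to W_k$ is an isomorphism for all $k$, and that it is then unique.

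For the easy implication I would assume the $\liesl_2$-action exists and decompose $W$ into irreducible $\liesl_2$-summands. On an irreducible of highest weight $m$ the weight-$(-k)$ and weight-$k$ spaces vanish simultaneously unless $0 \le k \le m$ and $k \equiv m \pmod 2$, in which case the classical computation shows $\frake^k$ restricts to an isomorphism between them; summing over all summands gives that $\frake^k \colon W_{-k} \to W_k$ is an isomorphism, i.e.\ the Lefschetz condition.

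The substantive direction is the converse, where I would build $\frakf$ from the hard Lefschetz isomorphisms via the primitive (Lefschetz) decomposition. Setting $\mathrm{Prim}_{-i} = \kernel\big(\frake^{i+1} \colon W_{-i} \to W_{i+2}\big)$ for $i \ge 0$, the isomorphisms $\frake^k \colon W_{-k} \xrightarrow{\sim} W_k$ yield, by a standard induction on the degree, a direct sum decomposition $W = \bigoplus_{i \ge 0} \bigoplus_{0 \le j \le i} \frake^j\,\mathrm{Prim}_{-i}$, in which each nonzero $v \in \mathrm{Prim}_{-i}$ spans under $\frake$ a string $v, \frake v, \dots, \frake^i v$ of nonzero vectors (note $\frake^i v \neq 0$ since $\frake^i$ is injective on $W_{-i}$, while $\frake^{i+1} v = 0$). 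I would then define $\frakf$ by the usual formula $\frakf(\frake^j v) = j(i-j+1)\,\frake^{j-1} v$ on each such string; since $W$ is the direct sum of these $\frake$-strings, the relations $[\frake,\frakf]=\frakh$ and $[\frakh,\frakf]=-2\frakf$ may be checked one string at a time, where they reduce to the defining relations of the $(i+1)$-dimensional irreducible representation. I expect the main obstacle to be exactly the verification that the primitive decomposition is a direct sum spanning all of $W$: this is the linear-algebra heart of hard Lefschetz and requires using the isomorphisms $\frake^k$ carefully at each graded degree.

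Finally, for functoriality and uniqueness I would use one uniform weight argument. Given a map of Lefschetz structures $\varphi \colon U \to V$, the induced map $\psi := \Gr\varphi$ commutes with $\frake$ (since $\varphi$ commutes with $\omega$) and with $\frakh$ (since it preserves degree); a direct manipulation using $\frake\frakf - \frakf\frake = \frakh$ then shows that $\theta := \psi\frakf - \frakf\psi$ satisfies $\frake\,\theta = \theta\,\frake$ while lowering degree by $2$. Viewing $\theta$ inside the $\liesl_2$-module $\Hom(\Gr U, \Gr V)$, with $\liesl_2$ acting by the commutator action, it is annihilated by $\ad_\frake$ and is homogeneous of $\ad_\frakh$-weight $-2$; but the kernel of $\frake$ in a finite-dimensional $\liesl_2$-module consists of highest weight vectors and so lies in nonnegative weights, forcing $\theta = 0$. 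Thus $\Gr\varphi$ commutes with $\frakf$. The same argument applied to the difference $g := \frakf_1 - \frakf_2$ of two candidate completions (which satisfies $\frake\,g = g\,\frake$, lowers degree by $2$, and hence is a weight-$(-2)$ element of $\kernel(\ad_\frake)$ in $\End(\Gr V)$) gives $g = 0$, establishing uniqueness of the $\liesl_2$-action.
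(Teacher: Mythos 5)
Your proof is correct. The paper gives no argument for this proposition at all --- it is introduced with ``The following is well-known'' --- so there is no proof to compare against; your write-up (reduction to the graded space, the primitive/Lefschetz decomposition to construct $\frakf$ string by string, and the weight argument in $\Hom(\Gr U,\Gr V)$ killing a degree $-2$ element of $\kernel(\ad_\frake)$ for functoriality and uniqueness) is precisely the standard argument the authors are implicitly invoking, and all the steps you flag as needing care (the direct-sum property of the primitive decomposition, the verification of $[\frake,\frakf]=\frakh$ on each string) go through as you describe.
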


Lefschetz structures are similar to mixed Hodge structures~\cite{deligne1971theorie} in the following sense:
\begin{prop}
	The category of Lefschetz structures is abelian and the functor $V \to \Gr V$ is an exact faithful functor from the category of Lefschetz structures to the category of finite dimensional representations of $\liesl_2$.
\end{prop}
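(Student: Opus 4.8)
The plan is to reduce all three assertions to a single statement: every morphism $\varphi\colon U\to V$ of Lefschetz structures is \emph{strict}, i.e.\ $\varphi(P_iU)=\image\varphi\cap P_iV$ for all $i$. Granting strictness, the abelian structure is built exactly as for filtered vector spaces, the point being that automatic strictness repairs the usual failure of the naive filtered category to be abelian. Explicitly, I would equip $\kernel\varphi$ with $\omega|_{\kernel\varphi}$ (well defined since $\omega$ commutes with $\varphi$) and the induced filtration $P_i\kernel\varphi=\kernel\varphi\cap P_iU$, and $\cokernel\varphi=V/\image\varphi$ with the induced $\omega$ and the quotient filtration. Strictness yields $\Gr\kernel\varphi=\kernel\Gr\varphi$ and $\Gr\cokernel\varphi=\cokernel\Gr\varphi$ as a graded subspace of $\Gr U$ and a graded quotient of $\Gr V$; since kernels and cokernels of $\liesl_2$-maps are again $\liesl_2$-representations, the previous proposition shows that $\kernel\varphi$ and $\cokernel\varphi$ are Lefschetz structures, and they are visibly the categorical kernel and cokernel. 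Strictness also identifies the filtration on $\mathrm{coim}\,\varphi=U/\kernel\varphi$ with that on $\image\varphi$, so the canonical map $\mathrm{coim}\,\varphi\to\image\varphi$ is an isomorphism of Lefschetz structures; hence the category is abelian.

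Exactness of $\Gr$ is then immediate from $\Gr\kernel\varphi=\kernel\Gr\varphi$ and $\Gr\cokernel\varphi=\cokernel\Gr\varphi$, since every short exact sequence is assembled from kernels and cokernels. For faithfulness I would argue directly from strictness: if $\Gr\varphi=0$ then $\varphi(P_iU)\subseteq P_{i-1}V$, while strictness gives $\image\varphi\cap P_iV=\varphi(P_iU)\subseteq P_{i-1}V$, so $\image\varphi\cap P_iV=\image\varphi\cap P_{i-1}V$ for all $i$; as $P_\bullet V$ is finite this forces $\image\varphi\cap P_iV=0$ for every $i$, and taking $i$ large gives $\varphi=0$.

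The hard part is the Key Lemma, strictness itself, and this is where the hypotheses enter essentially. I would build on the observation that $\omega$ is nilpotent (the filtration is finite and $\omega$ raises it by $2$) and that the defining condition of a Lefschetz structure says precisely that, after reindexing $P_i\leftrightarrow W_{-i}$, the filtration $P_\bullet$ is the \emph{monodromy weight filtration} $W(\omega)$: the unique increasing filtration with $\omega P_i\subseteq P_{i+2}$ for which $\omega^k\colon\Gr_{-k}\to\Gr_k$ is an isomorphism for all $k\ge 0$. Thus $P_\bullet$ depends only on the operator $\omega$, and is computed by the universal formula $W_k(\omega)=\sum_{i\ge 0}\bigl(\kernel\omega^{i+1}\cap\image\omega^{\,i-k}\bigr)$ (with $\image\omega^{m}:=V$ for $m\le 0$), which one verifies on a single Jordan block and extends by additivity over the Jordan decomposition. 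Because this formula involves only kernels and images of powers of $\omega$, any $\varphi$ with $\varphi\omega=\omega\varphi$ automatically preserves $P_\bullet$; the remaining, genuinely non-formal, point is that such $\varphi$ is moreover \emph{strict} for $W(\omega)$. I expect this to be the main obstacle. I would obtain it from the standard strictness of maps with respect to monodromy weight filtrations (the single-nilpotent analogue of Deligne's two-filtration lemma for mixed Hodge structures in \cite{deligne1971theorie}), using the input from the previous proposition that $\Gr\varphi$ commutes with $\frakf$: concretely, one decomposes $\Gr U$ and $\Gr V$ into primitive parts under the $\liesl_2$-action, notes that the $\liesl_2$-map $\Gr\varphi$ respects this Lefschetz decomposition, and lifts the resulting splitting through the filtration by induction on the weight, the base case living in the bottom step $P_{\min}$ where the filtration and its associated graded coincide. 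Controlling the filtration-decreasing lower-order terms of $\varphi$ in this induction is the delicate point that the $\frakf$-compatibility of $\Gr\varphi$ is designed to handle.
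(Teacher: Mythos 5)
Your overall architecture --- reduce everything to strictness of morphisms, then build kernels, cokernels, exactness and faithfulness formally from it --- is sound and is in substance what the paper does: the vanishing of the connecting homomorphism in the paper's snake-lemma argument is exactly the strictness statement, and the formal consequences you draw from it are correct. The problem is your proof of the Key Lemma itself, which rests on a false premise. The filtration of a Lefschetz structure is \emph{not} the monodromy weight filtration of $\omega$ and is \emph{not} determined by $\omega$ alone. The uniqueness of the monodromy filtration holds for the \emph{lowering} convention ($N W_k \subset W_{k-2}$, $N^k:\Gr_k \toiso \Gr_{-k}$), and that is the filtration your formula $\sum_i \kernel\omega^{i+1}\cap\image\omega^{i-k}$ computes. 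Here $\omega$ \emph{raises} the filtration by $2$ and the Lefschetz isomorphisms go from $\Gr_{-k}$ up to $\Gr_k$, and such a filtration is genuinely non-unique. Concretely, take $V=\BC^2$ with $\omega e_1=e_2$, $\omega e_2=0$: any line $L$ with $L\neq\kernel\omega$ gives a valid Lefschetz structure via $P_{-1}V=P_0V=L$, $P_1V=V$, whereas your formula outputs $\kernel\omega=\image\omega=\mathrm{span}(e_2)$, which is precisely the one line that \emph{fails} the axiom ($\omega$ kills it, so $\omega:\Gr_{-1}\to\Gr_1$ is zero). Consequently an $\omega$-commuting map need not preserve the filtration, let alone be strict for it --- this is why ``compatible with the filtration'' is an explicit clause in the paper's definition of a morphism rather than an automatic consequence. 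The same two-dimensional example gives an $\omega$-equivariant identity map between two Lefschetz structures that does not respect the filtrations.

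So the heart of the argument is missing: you must prove strictness using the full data of the morphism (compatibility with $P_\bullet$) together with the induced $\liesl_2$-action on the associated graded, not from $\omega$ alone. Your fallback sketch (decompose $\kernel\Gr\varphi$ and $\cokernel\Gr\varphi$ into primitives and induct on the weight) points in the right direction and is close to what the paper actually does: there one shows the connecting map $\kernel\varphi_{P_{i+1}/P_i}\to\cokernel\varphi_{P_i/P_{i-k}}$ vanishes by checking it on primitive elements of $\kernel\Gr\varphi$ and proving, by a separate induction, that $\omega^{i}$ is injective on $\cokernel\varphi_{P_{-i}/P_{-i-k}}$ for $i>0$. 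But as written that step is not carried out, and the justification you do supply in its place is incorrect. You would need to replace the ``universal formula'' argument entirely with an argument of this primitive-decomposition type.
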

\begin{proof}
Let $\varphi:U\to V$ be a map of Lefschetz structures. By the snake lemma applied to the short exact sequence
	\[
	0 \to P_i U / P_{i-k} U \to P_{i+1} U / P_{i-k} U \to P_{i+1} U / P_{i} U \to 0,
	\]
	the corresponding sequence for $V$ and the map between them induced by $\varphi$ we obtain a long exact sequence
	\[
	0 \to \kernel \varphi_{P_i / P_{i-k}} \to \kernel \varphi_{P_{i+1} / P_{i-k}} \to \kernel\varphi_{P_{i+1} / P_{i}} \to \cokernel \varphi_{P_i / P_{i-k}}
	\]
	\[
	 \to \cokernel \varphi_{P_{i+1} / P_{i-k}} \to \cokernel\varphi_{P_{i+1} / P_{i}} \to 0.
	\]
	Here we denote by $\varphi_{P_i/P_j}$ the induced map $P_i U/P_j U \to P_i V/P_j V$.
    
\begin{lem}
    The connecting map $\kernel\varphi_{P_{i+1} / P_{i}} \to \cokernel \varphi_{P_i / P_{i-k}}$ is zero. 
\end{lem}
\begin{proof}
    We proceed by induction. Notice that $\bigoplus_{i} \kernel\varphi_{P_{i+1} / P_{i}} = \kernel \Gr\varphi$ is a representation of $\liesl_2$, and therefore is generated over $\omega$ by \emph{primitive} elements, i.e. elements $x\in\kernel\varphi_{P_{-i} / P_{-i-1}}$ for $i\geq 0$ satisfying $\omega^{i+1} x=0$. The image of such $x$ under the connecting map is some  $y\in \cokernel \varphi_{P_{-i-1}/P_{-i-1-k}}$ satisfying $\omega^{i+1} y=0$.

	Let us show that $\omega^{i}$ is injective on $\cokernel \varphi_{P_{-i}/P_{-i-k}}$ for all $i>0$. By the induction assumption we have a short exact sequence
	\[
	0 \to \cokernel \varphi_{P_{-i-1} / P_{-i-k}} \to \cokernel \varphi_{P_{-i} / P_{-i-k}} \to \cokernel\varphi_{P_{-i} / P_{-i-1}} \to 0,
	\]
	and the same short exact sequence for $i$ instead of $-i$. By snake lemma applied to the maps induced by $\omega^i$, we have a short exact sequence
	\[
	0 \to \kernel(\cokernel \varphi_{P_{-i-1} / P_{-i-k}} \to \cokernel \varphi_{P_{i-1} / P_{i-k}})
	\to
	\]
	\[
	\kernel(\cokernel \varphi_{P_{-i} / P_{-i-k}} \to \cokernel \varphi_{P_{i} / P_{i-k}})\to \kernel(\cokernel \varphi_{P_{-i} / P_{-i-1}} \to \cokernel \varphi_{P_{i} / P_{i-1}}).
	\]
	Since $\cokernel \Gr \varphi$ is a representation of $\liesl_2$, the map $\omega^i:\cokernel \varphi_{P_{-i} / P_{-i-1}} \to \cokernel \varphi_{P_{i} / P_{i-1}}$ is an isomorphism, and therefore its kernel vanishes. We know that $\omega^{i+1}$ on $\cokernel \varphi_{P_{-i-1} / P_{-i-k}}$ is injective, since this was already proved when treating the $k-1$ case. Hence $\omega^{i}$ on it is injective, and therefore $\kernel(\cokernel \varphi_{P_{-i-1} / P_{-i-k}} \to \cokernel \varphi_{P_{i-1} / P_{i-k}})$ vanishes, and we conclude that the remaining kernel also has to vanish.

	Coming back to the element $y\in \cokernel \varphi_{P_{-i-1}/P_{-i-1-k}}$, we see that $\omega^{i+1} y=0$ implies $y=0$, and therefore the connecting map vanishes on all primitive elements, and therefore on all elements.
\end{proof}

	Having established the vanishing of connecting map for all $k$, taking $k$ sufficiently large we see that for all $i$ the map
	\[
	\kernel \varphi \cap P_i U \to \kernel \varphi_{P_{i}/P_{i-1}},
	\]
	is surjective, which implies that the natural map
	\[
	\kernel \varphi \cap P_i U / \kernel \varphi \cap P_{i-1} U \to \kernel \varphi_{P_{i}/P_{i-1}}
	\]
	is an isomorphism; equivalently, $\Gr \kernel \varphi \to \kernel \Gr \varphi$ is an isomorphism. Thus $\kernel\varphi$ is a Lefschetz structure. Similarly, we obtain that $\cokernel \Gr \varphi \to \Gr \cokernel \varphi$ is an isomorphism, and so $\cokernel\varphi$ is a Lefschetz structure as well. This implies that the category of Lefschetz structures has kernels and cokernels given by the usual kernels and cokernels together with the induced filtrations. To see that it is abelian, we use the commutative square
	\[
	\begin{tikzcd}
		\cokernel \kernel \Gr \varphi \ar{r}{\sim} \ar[d, "\sim" {anchor=south, rotate=90, inner sep=.5mm}] & \kernel \cokernel \Gr \varphi \\
		\Gr \cokernel \kernel \varphi \ar{r} & \Gr \kernel \cokernel \varphi \ar[u, "\sim" {anchor=south, rotate=90, inner sep=.5mm}]\\
	\end{tikzcd}
	\]
	to see that $\Gr \cokernel \kernel \varphi \to \Gr \kernel \cokernel \varphi$ is an isomorphism. This implies that the two filtrations on $\image\varphi$ coincide. We have shown that the category of Lefschetz structures is abelian. We have also shown that kernels and cokernels commute with $\Gr$, which means that $\Gr$ is exact. Finally, it is faithful because $\Gr V=0$ and finiteness of the filtration implies $V=0$.
\end{proof}

\begin{defn}
For a nilpotent operator $N$ acting on a vector space the \emph{canonical filtration} or \emph{weight filtration} is the increasing filtration $W_\bullet$ defined by
\[
W_k = \sum_{i\geq \max(0,k)} \kernel N^{i+1} \cap \image N^{i-k}
\]
\end{defn}

We refer to e.g. \cite{cataldo2005hodge} for the definition of the perverse filtration associated to a projective morphism. We will only use the following properties of the perverse filtration, proved by de Cataldo and Migliorini:
\begin{thm}[See \cite{cataldo2005hodge}, Theorems 2.1.1, 2.1.4, 2.3.3 and Sections 4.5, 4.6]\label{thm:decomposition}
Let $f:X\to Y$ be a projective map of algebraic varieties. Suppose that $X$ is smooth of dimension $n$. Let $\omega\in H^2(X)$ be a relative ample class, i.e. a class whose positive multiple comes from the embedding $X\to Y\times \BP^N$. Then the associated perverse filtration $P_\bullet H^*(X)$ together with the operator of cup product by $\omega$ form a Lefschetz structure on $H^*(X)$. Moreover, 
\begin{enumerate}
    \item If $U\subset Y$ is open, then the restriction $H^*(X)\to H^*(f^{-1} (U))$ is a map of Lefschetz structures,
    \item If $Y$ is projective and $L$ is the pullback of an ample class on $Y$, then $P_i H^*(X) = \bigoplus_{b\in\BZ} W_{i-b} H^{n+b}(X)$, where $W_\bullet$ is the canonical filtration induced by the nilpotent operator of cup product by $L$.
\end{enumerate} 
\end{thm}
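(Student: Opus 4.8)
The plan is to derive all three assertions from the decomposition theorem and the relative hard Lefschetz theorem of Beilinson--Bernstein--Deligne and Gabber, applied to the complex $\BFR f_* \BQ_X[n]$ on $Y$. Recall that the perverse (Leray) filtration on $H^*(X)$ is defined by
\[
P_i H^{n+b}(X) = \image\left( \BH^{b}(Y, {}^{p}\tau_{\leq i} \BFR f_* \BQ_X[n]) \to \BH^{b}(Y, \BFR f_* \BQ_X[n]) \right),
\]
where ${}^{p}\tau_{\leq i}$ is the perverse truncation and $\BH$ denotes hypercohomology. Since $X$ is smooth, $\BQ_X[n]$ is a self-dual pure perverse sheaf, so the decomposition theorem yields a (noncanonical) splitting $\BFR f_* \BQ_X[n] \cong \bigoplus_i {}^{p}\CH^i(\BFR f_* \BQ_X[n])[-i]$ into shifted semisimple perverse sheaves, self-dual about $i=0$, and the associated graded $\Gr^P_i H^*(X)$ is identified with $\bigoplus_b \BH^{b}(Y, {}^{p}\CH^i(\BFR f_* \BQ_X[n])[-i])$.

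For the first assertion I would use that cup product by a class in $H^2(X)$ raises the perverse filtration by $2$, a standard multiplicativity property of the perverse Leray filtration, so that $\omega P_i V \subset P_{i+2} V$. The relative hard Lefschetz theorem asserts that $\omega^k$ induces isomorphisms ${}^{p}\CH^{-k}(\BFR f_* \BQ_X[n]) \toiso {}^{p}\CH^{k}(\BFR f_* \BQ_X[n])$ of perverse cohomology sheaves. Passing to hypercohomology and using the splitting above, the induced map $\omega^k: \Gr^P_{-k} H^*(X) \to \Gr^P_{k} H^*(X)$ is an isomorphism, which is exactly the condition that $(H^*(X), \omega, P_\bullet)$ be a Lefschetz structure.

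For the restriction to an open $U\subset Y$, the key point is that the perverse $t$-structure is local: the open restriction $j^*$ along $j: U\hookrightarrow Y$ is $t$-exact, hence commutes with the perverse truncation, and $(\BFR f_* \BQ_X[n])|_U \cong \BFR f_{U*} \BQ_{f^{-1}U}[n]$ where $f_U: f^{-1}U\to U$. It follows that the restriction $H^*(X)\to H^*(f^{-1}U)$ carries $P_i$ into $P_i$. Since $\omega$ restricts to a relative ample class for $f_U$ and cup product commutes with pullback, the restriction commutes with $\omega$, so it is a map of Lefschetz structures.

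The third and hardest assertion is the identification of the perverse filtration with the weight filtration $W_\bullet$ of the nilpotent operator $L = f^* A$, with $A$ ample on the projective base $Y$; here $L$ is nilpotent because $A^{\dim Y + 1}=0$. The plan is to combine the relative hard Lefschetz for $\omega$ with the absolute hard Lefschetz for $A$ on $Y$: applied to each semisimple summand ${}^{p}\CH^i(\BFR f_* \BQ_X[n])$, hard Lefschetz for semisimple perverse sheaves on the projective variety $Y$ gives isomorphisms $A^j: \BH^{-j}(Y, {}^{p}\CH^i) \toiso \BH^{j}(Y, {}^{p}\CH^i)$. These two commuting Lefschetz operators organize $H^*(X)$ into a bigraded Lefschetz package indexed by the perverse degree $i$ and the cohomological degree within each summand; the monodromy weight filtration attached to the single nilpotent $L$, shifted on $H^{n+b}(X)$ by $b$, then reconstructs the perverse filtration, yielding $P_i H^*(X) = \bigoplus_b W_{i-b} H^{n+b}(X)$. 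The hard part will be exactly this last matching: verifying that the Jacobson--Morozov weight filtration of $L$ agrees degree by degree with the perverse filtration is the genuine Hodge-theoretic content of de Cataldo--Migliorini, going well beyond the formal consequences of the decomposition theorem, and it is where the real work lies.
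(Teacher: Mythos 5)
The paper does not prove this statement: it is imported verbatim from de Cataldo--Migliorini, with the sentence ``We do not define the perverse filtration here, but we list all the properties we need, proved by de Cataldo and Migliorini'' and pointers to the precise theorems of \cite{cataldo2005hodge}. So there is no in-paper argument to compare yours against; what can be assessed is whether your sketch would actually establish the three assertions. The first two parts are fine in outline and are the standard arguments: $\omega\in H^2(X)=\Hom(\BQ_Y,\BFR f_*\BQ_X[2])$ acts on $\BFR f_*\BQ_X[n]$, hence shifts perverse truncations by $2$, giving $\omega P_i\subset P_{i+2}$; relative hard Lefschetz on the perverse cohomology sheaves plus the decomposition-theorem splitting gives the isomorphisms on the associated graded; and $t$-exactness of $j^*$ together with base change gives compatibility of the filtration with restriction to $f^{-1}U$.

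The genuine gap is in the third assertion, which you state but do not prove, and which you mischaracterize as ``going well beyond the formal consequences of the decomposition theorem.'' It does not: once you have (a) the splitting $H^{n+b}(X)\cong\bigoplus_{i'}\BH^{b-i'}(Y,{}^{p}\CH^{i'})$ with $P_iH^{n+b}(X)=\bigoplus_{i'\le i}\BH^{b-i'}(Y,{}^{p}\CH^{i'})$, and (b) hard Lefschetz on the projective $Y$ for each semisimple summand, $L^j:\BH^{-j}(Y,{}^{p}\CH^{i'})\toiso\BH^{j}(Y,{}^{p}\CH^{i'})$, the identification is pure bookkeeping with the canonical filtration of a nilpotent operator. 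Indeed, $L$ is pulled back from $Y$, so it preserves each summand and is homogeneous for the grading $m=b-i'$; hard Lefschetz says the summand indexed by $i'$ is a direct sum of $L$-strings symmetric about $m=0$, so the (unique, direct-sum-compatible) weight filtration of $L$ on that summand is $W_k=\bigoplus_{m\ge -k}$. A piece of $H^{n+b}(X)$ in the $i'$-summand has $m=b-i'$, hence lies in $W_{i-b}$ exactly when $b-i'\ge -(i-b)$, i.e.\ $i'\le i$, which is precisely $P_i$. You should carry out this computation (and note that the conclusion is independent of the non-canonical choice of splitting, since both $P_\bullet$ and $W_\bullet$ are intrinsic); as written, the hardest of the three claims is asserted rather than derived, and the reader is pointed toward a difficulty that is not actually where the content lies. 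The deep input is hard Lefschetz for the semisimple perverse summands on $Y$, which is part of the same package you already invoked.
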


\subsection{Functoriality}
In this section we analyze the effect of pullback and Gysin maps on the perverse filtration. We assume that a quasi-projective base $A\subset \BP^M$ is fixed. Let $\ol A$ be the closure of $A$. Any projective map $\pi:X\to A$ induces an embedding $X\subset \BP^N\times\BP^M$ and resolving the singularities of the closure we obtain a smooth projective compactification $\iota:X\to \ol X$ together with a map $\ol X\to \BP^N\times \BP^M$. Let $\omega$ be an ample class of $\ol X$. By pullback we obtain a map of Lefschetz structures
\[
\iota^*: H^*(\ol X) \to H^*(X),
\]
which induces a Lefschetz structure on $H^*_\pure(X)$. In particular, we have
\[
P_k H^*_\pure(X) = P_k H^*(X) \cap H^*_\pure(X)  = \iota^* P_k H^*(\ol X).
\]
Any $\alpha\in H^i(\ol X)$ induces the cup-product map $H^*(\ol X)\to H^{*+i}(\ol X)$, which commutes with $L$, which is the pullback of the hyperplane class from $\BP^M$. So $\alpha$ preserves the canonical filtration associated with $L$ and therefore we have
\[
\alpha P_k H^*(\ol X) \subset P_{k+i} H^{*+i}(\ol X).
\]
Restricting to $H^*(X)$, we obtain the following:
\begin{prop}\label{prop:perverse cup product}
	For any $\alpha\in H^i_\pure(X)$ we have $\alpha P_k H^*_\pure(X)\subset P_{k+i}H^{*+i}_\pure(X)$.\qed
\end{prop}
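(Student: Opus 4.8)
The plan is to reduce everything to the smooth projective compactification $\ol X$, where the perverse filtration is computed by Theorem \ref{thm:decomposition}, and then transport the conclusion back along $\iota^*$. First I would use that, by the discussion preceding the statement, the restriction $\iota^*\colon H^*(\ol X)\to H^*_\pure(X)$ is surjective and satisfies $P_k H^*_\pure(X)=\iota^* P_k H^*(\ol X)$. Hence, given $\alpha\in H^i_\pure(X)$ and $\beta\in P_k H^*_\pure(X)$, I can choose lifts $\wt\alpha\in H^i(\ol X)$ and $\wt\beta\in P_k H^*(\ol X)$ with $\iota^*\wt\alpha=\alpha$ and $\iota^*\wt\beta=\beta$. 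Since pullback is a ring homomorphism, $\alpha\cup\beta=\iota^*(\wt\alpha\cup\wt\beta)$, so it suffices to prove the cup-product bound on $\ol X$ and then restrict.

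The core step is the cup-product claim on $\ol X$. Because $\ol A$ is projective, the last assertion of Theorem \ref{thm:decomposition} identifies the perverse filtration with the canonical (weight) filtration of the nilpotent operator $L$, the pullback of the hyperplane class of $\BP^M$: concretely $P_j H^*(\ol X)=\bigoplus_b W_{j-b}H^{n+b}(\ol X)$ with $n=\dim\ol X$. Cup product by $\wt\alpha$ commutes with $L$ inside the graded-commutative cohomology ring (as $L$ has even degree and is thus central), hence preserves each $W_\bullet H^*(\ol X)$; combined with the degree shift $H^{n+b}\to H^{n+b+i}$ this carries $W_{j-b}H^{n+b}$ into $W_{j-b}H^{n+b+i}=P_{j+i}H^{n+b+i}(\ol X)$. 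Summing over $b$ yields $\wt\alpha\, P_j H^*(\ol X)\subset P_{j+i}H^{*+i}(\ol X)$, which is the required inclusion when $j=k$.

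Finally I would apply $\iota^*$ to $\wt\alpha\cup\wt\beta\in P_{k+i}H^{*+i}(\ol X)$ and invoke $\iota^* P_{k+i}H^{*+i}(\ol X)=P_{k+i}H^{*+i}_\pure(X)$ to conclude $\alpha\cup\beta\in P_{k+i}H^{*+i}_\pure(X)$. The only genuinely delicate point is the index bookkeeping in the middle step: one must match the shift of perverse degree by $i$ against the shift of cohomological degree by $i$ so that the weight index $j-b$ is left unchanged, which is exactly what makes the weight-preservation of $\wt\alpha$ translate into a perverse shift by $i$. Everything else, namely the surjectivity of $\iota^*$ onto $H^*_\pure(X)$, its multiplicativity, and the existence of lifts staying inside $P_k$, is formal and has already been recorded above.
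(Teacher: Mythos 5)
Your proposal is correct and follows essentially the same route as the paper: lift to the smooth projective compactification $\ol X$, identify the perverse filtration there with the canonical filtration of the nilpotent operator $L$ via Theorem \ref{thm:decomposition}, observe that cup product commutes with $L$ and hence preserves that filtration up to the degree shift, and restrict back along $\iota^*$ using $P_k H^*_\pure(X)=\iota^* P_k H^*(\ol X)$. The index bookkeeping you flag as the delicate point is handled exactly as you describe.
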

Next, suppose we have a commutative diagram
\begin{equation}\label{eq:cd1}
\begin{tikzcd}
	X \arrow{r}{f} \arrow{d}{\pi_X} & Y \arrow{ld}{\pi_Y}\\
	A
\end{tikzcd},
\end{equation}
where $\pi_X$ and $\pi_Y$ are projective and $X$, $Y$ are smooth of dimensions $d_X, d_Y$. Note that the map $f$ is automatically projective. We pick smooth compactifications $\iota_X:X\to \ol X$, $\iota_Y:Y\to \ol Y$ and a commutative diagram
\[
\begin{tikzcd}
	\ol X \arrow{r}{\ol f} \arrow{d}{\pi_{\ol X}} & \ol Y \arrow{ld}{\pi_{\ol Y}}\\
	\ol A
\end{tikzcd},
\]
which produces \eqref{eq:cd1} when restricted to $A$. The pullback $H^*(\ol Y)\to H^*(\ol X)$ and the Gysin map $H^*(\ol X) \to H^{*+2d_Y-2d_X}(\ol Y)$ commute with the multiplication by $L$ and therefore preserve the canonical filtration associated to it. So we obtain
\begin{prop}\label{prop:functoriality}
	For a commutative diagram of the form \eqref{eq:cd1} with $\pi_X$, $\pi_Y$ projective and $X$, $Y$ smooth of dimensions $d_X, d_Y$ the pullback map and the Gysin map satisfy
	\[
	f^* P_k H^*_\pure (Y) \subset P_{k+d_Y-d_X} H^*_\pure(X),\qquad f_* P_k H^*_\pure (X) \subset P_{k+d_Y-d_X} H^{*+2 d_Y - 2 d_X}_\pure(Y).
	\]
\end{prop}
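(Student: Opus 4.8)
The plan is to transfer the statement to the smooth projective compactifications and there reduce it to the fact that pullback and Gysin maps commute with a single relative ample class. First I would fix, as in the construction preceding the statement, a projective $\ol A\supset A$ with a fixed ample class $h$, and choose smooth projective compactifications $\iota_X\colon X\to\ol X$, $\iota_Y\colon Y\to\ol Y$ fitting in a commutative square
\[
\begin{tikzcd}
	X \arrow{r}{f} \arrow{d}{\iota_X} & Y \arrow{d}{\iota_Y}\\
	\ol X \arrow{r}{\ol f} & \ol Y
\end{tikzcd}
\]
over $\ol A$, so that $\pi_{\ol X}=\pi_{\ol Y}\ol f$. Writing $L_{\ol X}=\pi_{\ol X}^*h$ and $L_{\ol Y}=\pi_{\ol Y}^*h$, commutativity over $\ol A$ gives $L_{\ol X}=\ol f^*L_{\ol Y}$. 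By Theorem \ref{thm:decomposition}, on $\ol X$ and $\ol Y$ the perverse filtration is recovered from the canonical filtration $W_\bullet$ of the respective $L$ by $P_iH^*(\ol X)=\bigoplus_b W_{i-b}H^{d_X+b}(\ol X)$ and $P_iH^*(\ol Y)=\bigoplus_b W_{i-b}H^{d_Y+b}(\ol Y)$; and by the discussion preceding Proposition \ref{prop:perverse cup product} one has $P_kH^*_\pure(X)=\iota_X^*P_kH^*(\ol X)$, similarly for $Y$.

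The key point is that both $\ol f^*$ and the Gysin map $\ol f_*$ intertwine the operators $L$, hence preserve $W_\bullet$. For $\ol f^*$ this follows from $L_{\ol X}=\ol f^*L_{\ol Y}$, and for $\ol f_*$ from the projection formula $\ol f_*(L_{\ol X}\cup\beta)=\ol f_*(\ol f^*L_{\ol Y}\cup\beta)=L_{\ol Y}\cup\ol f_*\beta$. Since the canonical filtration of a nilpotent operator is functorial under maps commuting with it, $\ol f^*$ and $\ol f_*$ carry $W_k$ into $W_k$. Feeding this into the two displayed identities and tracking the cohomological degree---unchanged by $\ol f^*$, raised by $2d_Y-2d_X$ by $\ol f_*$---the index $b$ shifts by $d_Y-d_X$ in both cases, giving
\[
\ol f^*P_kH^*(\ol Y)\subset P_{k+d_Y-d_X}H^*(\ol X),\qquad \ol f_*P_kH^*(\ol X)\subset P_{k+d_Y-d_X}H^{*+2d_Y-2d_X}(\ol Y).
\]

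Finally I would descend to the open parts. The pullback case is immediate: $\iota_X^*\ol f^*=f^*\iota_Y^*$ by functoriality, so $f^*P_kH^*_\pure(Y)=\iota_X^*\ol f^*P_kH^*(\ol Y)\subset\iota_X^*P_{k+d_Y-d_X}H^*(\ol X)=P_{k+d_Y-d_X}H^*_\pure(X)$. I expect the Gysin case to be the main obstacle, since it requires knowing that $f_*$ on the pure part is computed by restricting $\ol f_*$, i.e.\ $f_*\iota_X^*=\iota_Y^*\ol f_*$. To obtain this I would argue as in the Purity section: since $f$ is proper (it is automatically projective), the natural map $X\to\ol X\times_{\ol Y}Y$ is an isomorphism, so the square above is cartesian with $\iota_X,\iota_Y$ open and $f,\ol f$ proper; base change for proper pushforward along open pullback then gives $\iota_Y^*\ol f_*=f_*\iota_X^*$ in Borel-Moore homology, which---all four spaces being smooth, so that open restriction agrees with cohomological restriction and proper pushforward with the Gysin map---becomes the desired cohomological identity. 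Combined with $P_kH^*_\pure(X)=\iota_X^*P_kH^*(\ol X)$ and the inclusion for $\ol f_*$, this yields the second inclusion. The two loose ends to verify are that the compactifications can indeed be chosen simultaneously cartesian and compatible over $\ol A$, and that the intrinsic filtration $P_kH^*_\pure(X)$ is independent of this choice so that the specially adapted $\ol X$ is legitimate.
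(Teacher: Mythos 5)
Your proof is correct and follows the same route as the paper: compactify the diagram over $\ol A$, observe that $\ol f^*$ and $\ol f_*$ commute with $L$ and hence preserve its canonical filtration, translate back to the perverse filtration via Theorem \ref{thm:decomposition} (picking up the shift $d_Y-d_X$ from the degree bookkeeping), and restrict to the pure parts. Your two ``loose ends'' are already settled by earlier parts of the paper: the graph-closure construction in the Purity section produces a compatible cartesian square over $\ol A$ (since $f$ is proper, $X\to\ol X\times_{\ol Y}Y$ is an isomorphism), and the identity $P_kH^*_\pure(X)=P_kH^*(X)\cap H^*_\pure(X)$ stated before Proposition \ref{prop:perverse cup product} shows the filtration on the pure part is intrinsic to $\pi_X$, independent of the chosen compactification.
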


More generally, suppose that we have a commutative diagram
\begin{equation}\label{eq:cd2}
	\begin{tikzcd}
		X \arrow[swap]{dr}{\pi_X} & Z \arrow{d}{\pi_Z} \arrow[swap]{l}{\pi_1} \arrow{r}{\pi_2} & Y \arrow{dl}{\pi_Y}\\
		& A &
	\end{tikzcd},
\end{equation}
where $X$, $Y$, $Z$ are smooth and $\pi_X$, $\pi_Y$, $\pi_Z$ are projective. Let us also assume that $Z$ carries a virtual fundamental class $[Z]^\vir \in H_*(Z)$, see Section~\ref{sec:virtual fund class}. Consider the action of the correspondence
\begin{equation}\label{eq:correspondence 2}
\alpha \to \pi_{2*}(\pi_1^* \alpha \cap [Z]^\vir).
\end{equation}
By~\eqref{Eq:projformula}, we can assume that $Z$ is smooth.
Suppose the dimensions of $X$, $Y$, $Z$ are $d_X$, $d_Y$, $d_Z$ respectively. Then Proposition~\ref{prop:functoriality} implies
\begin{prop}
	The correspondence~\eqref{eq:correspondence 2} sends $P_i H^*(X)$ to $P_{i+d_X+d_Y - 2 d_Z + 2k} H^{*+2 d_Y - 2 d_Z + 2 k}(Y)$.
\end{prop}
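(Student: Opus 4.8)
The plan is to factor the correspondence $\alpha\mapsto \pi_{2*}(\pi_1^*\alpha\cup[Z]^\vir)$ into three elementary operations whose effect on the perverse filtration has already been recorded, and then add up the three shifts. As remarked just before the statement, I would first replace $Z$ by a smooth variety that is projective over $A$ (via Hironaka resolution and Chow's lemma); this changes neither the class $[Z]^\vir\in H^{2k}_\pure(Z)$ nor the induced map, so I assume henceforth that $Z$ is smooth of dimension $d_Z$ and $\pi_Z$ is projective. The two triangles in \eqref{eq:cd2} then each have the shape of \eqref{eq:cd1}: the map $\pi_1\colon Z\to X$ satisfies $\pi_X\circ\pi_1=\pi_Z$ and $\pi_2\colon Z\to Y$ satisfies $\pi_Y\circ\pi_2=\pi_Z$, both over $A$, and both are projective because $\pi_Z$ is.

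First I would apply the pullback part of Proposition \ref{prop:functoriality} to $\pi_1\colon Z\to X$, of source and target dimensions $d_Z$ and $d_X$, which yields
\[
\pi_1^* P_i H^*_\pure(X)\subset P_{i+d_X-d_Z} H^*_\pure(Z).
\]
Next, since $[Z]^\vir\in H^{2k}_\pure(Z)$, Proposition \ref{prop:perverse cup product} shows that cup product with it raises the perverse index by $2k$ and the cohomological degree by $2k$:
\[
[Z]^\vir\cup P_j H^*_\pure(Z)\subset P_{j+2k} H^{*+2k}_\pure(Z).
\]
Finally I would apply the Gysin part of Proposition \ref{prop:functoriality} to $\pi_2\colon Z\to Y$, of dimensions $d_Z$ and $d_Y$:
\[
\pi_{2*} P_l H^*_\pure(Z)\subset P_{l+d_Y-d_Z} H^{*+2d_Y-2d_Z}_\pure(Y).
\]
Composing the three inclusions with $j=i+d_X-d_Z$ and $l=i+d_X-d_Z+2k$, the total shift of the perverse index is $(d_X-d_Z)+2k+(d_Y-d_Z)=d_X+d_Y-2d_Z+2k$, while the cohomological degree is raised by $2k+(2d_Y-2d_Z)=2d_Y-2d_Z+2k$, exactly as claimed. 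The image lies automatically in the pure part, since $[Z]^\vir$ is pure and both pullback and Gysin maps preserve purity.

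I do not expect a serious obstacle: the content is entirely the bookkeeping of the three shifts together with the reduction to a smooth $Z$ projective over $A$. The one point deserving care is checking that $\pi_1$ and $\pi_2$ genuinely fit into commutative triangles over $A$ of the form \eqref{eq:cd1}, so that Proposition \ref{prop:functoriality} applies in each case; this is immediate from $\pi_Z=\pi_X\pi_1=\pi_Y\pi_2$ and the projectivity of $\pi_Z$. If one wants the statement for the full cohomology $H^*(X)$ rather than its pure part, the identical factorisation applies once the functoriality of Proposition \ref{prop:functoriality} is upgraded to all of $H^*$, which is part of the de Cataldo--Migliorini theory underlying Theorem \ref{thm:decomposition}.
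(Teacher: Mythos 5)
Your factorisation of the correspondence as $\pi_{2*}\circ([Z]^\vir\cup)\circ\pi_1^*$, with the three perversity shifts $d_X-d_Z$, $2k$, and $d_Y-d_Z$ supplied by Propositions \ref{prop:functoriality} and \ref{prop:perverse cup product}, is exactly the argument the paper intends (it states the proposition without proof, as an immediate consequence of those two propositions), and your bookkeeping of both the perverse and cohomological shifts is correct. The reduction to $Z$ smooth and projective over $A$ and the remark about the pure part are also consistent with the paper's setup.
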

Notice that $d_Z$ and $k$ enter the statement via the difference $d_Z-k$, which corresponds to the difference between the (naive) dimension and the virtual dimension of $Z$.

\begin{rem}
(i) When $2 d_Z - 2k = d_X + d_Y$, e.g. for $k=0$, $X$ and $Y$ symplectic, and $Z$ a Lagrangian in $X\times Y$, the correspondence preserves the perverse filtration.\\
(ii) When $d_X=d_Y$, a correspondence of cohomological degree $j$ sends $P_i$ to $P_{i+j}$.
\end{rem}

\section{Hecke correspondences}\label{sec:hecke}
\subsection{Notations}\label{ssec:hecke notations}
Let $S$ be a smooth projective surface, and set $H=H^*=H^*(S,\mathbb{Q})$. Let $\delta\in H^4$ be the class of a point; by abuse of notation we write $\Delta\in \bigoplus_{i} H^i\otimes H^{4-i}$ for the class of the diagonal. Denote the Chern roots of the tangent bundle of $S$ by $t_1, t_2$. We then have
\[
\Td^{-1}_S = \frac{(1-e^{-t_1})(1-e^{-t_2})}{t_1 t_2} = 1 - \frac{t_1+t_2}{2} + \frac{2t_1^2 + 2t_2^2 -3t_1t_2}{12}.
\]

In this section we consider actions of correspondences on moduli stacks of coherent sheaves on $S$ induced by pointwise modifications. Such actions are best studied using CoHAs, as in \cite{MMSV}, \cite{negut2019shuffle}. However we will not need the full generality of \textit{loc.cit.} and thus only cite the relevant results. In particular, as we will consider correspondences over \textit{smooth} stacks only, we will freely pass between cohomology and Borel-Moore homology. 

Fix a non-zero $\alpha_0\in H^\ev \coloneqq H^0 \oplus H^2 \oplus H^4$. For any $\alpha=\alpha_0 + n\delta$ with $n \in \BN$ we fix a smooth open substack $\CM_\alpha$ of the (classical) moduli stack of coherent sheaves on $S$ of Chern character $\alpha$.
We denote by $\Coh_\delta$ the stack of length $1$ sheaves on $S$; it is explicitly given by $\Coh_\delta=S\times B\BG_m$. We denote by $\CF_\alpha$ the universal coherent sheaf on $\CM_\alpha\times S$.

We make the following two running assumptions:
\begin{enumerate}[label=(A\arabic*),ref=A\arabic*]
\item \label{assumption-a} For any $\alpha$, $\CM_\alpha$ parametrizes coherent sheaves on $S$ with no finite length subsheaves,
\item \label{assumption-b} For any $n>0$, a coherent sheaf $\CF \in \CM_{\alpha_0+n\delta}$ and a colength one subsheaf $\CG \subset \CF$ we have $\CG \in \CM_{\alpha_0+(n-1)\delta}$.
\end{enumerate}

Such assumptions are verified in several examples of interest, e.g. for stable Higgs bundles over the elliptic locus, see Section~\ref{sec:Higgs-bun}. 

\subsection{Length one Hecke correspondences}\label{ssec:hecke hecke}
By assumption~\eqref{assumption-a}, the universal sheaf $\CF_\alpha$ over $\CM_\alpha$ admits a length two resolution $\CE_1\to\CE_0 \to \CF_\alpha$
by vector bundles. We may thus consider the projectivization  
$\CZ_\alpha\coloneqq\BP(\CF_\alpha)$, which may be explicitly realized as the zero set of the canonical section of the vector bundle $\CE_1^*(1)$ on the projective bundle $\BP(\CE_0)$. The stack $\CZ_\alpha$ parameterizes colength $1$ subsheaves of $\CF_\alpha$, c.f.~\cite[Sec. 8]{Jiang}, or equivalently flags $F'\subset F$ of sheaves on $S$ with $\ch F = \alpha$, and quotient $F/F'$ of length $1$.

By assumption~\eqref{assumption-b}, we have a natural map $\CZ_\alpha\to \CM_{\alpha-\delta}$.
Consider the following \textit{Hecke correspondence}:
\begin{equation}\label{diag:Hecke corr}
\begin{tikzcd}
    & \CZ_\alpha \arrow[dl,"\pi_1\times\pi_2"'] \arrow{rd}{\pi_3} & \\
    \Coh_\delta \times \CM_{\alpha-\delta}& & \CM_\alpha
\end{tikzcd}\qquad
\begin{aligned}
    \pi_1&\times \pi_2: (F'\subset F)\mapsto (F/F',F'),\\
    \pi_3&: (F'\subset F) \mapsto F.
\end{aligned}
\end{equation}
The map $\pi_3$ is proper by construction.
Therefore, as explained in Section~\ref{sec:topological}, we obtain the following map, which we denote by $T$:
\begin{equation*}
\begin{split}
H^*(\Coh_\delta) \otimes H^*(\CM_{\alpha-\delta}) &\to H^*(\CM_\alpha),\\ \xi\otimes c &\mapsto T(\xi)(\eta) = \pi_{3*}([Z_\alpha]^\vir \cap \pi_1^*(\xi) \cap \pi_2^*(c)).
\end{split}
\end{equation*}

\subsection{The tautological classes}\label{subs:taut-classes}
Denote by $\Lambda$ the Macdonald ring of symmetric functions in infinitely many variables.
We have $\Lambda=\BC[p_1,p_2,\ldots]$, where $p_k$ is the power sum function of degree $k$.
For each $\alpha$ there is a ring homomorphism $\Lambda\to H^*(\CM_\alpha\times S)$, which sends $p_k$ to the element $p_k(\CF_\alpha)\in H^{2k}(\CM_\alpha\times S)$ defined via the generating series:
\[
\ch(\CF_\alpha) = \rank \CF_\alpha + \sum_{n=1}^\infty \frac{p_n(\CF_\alpha)}{n!}.
\]
It is convenient to denote $p_0(\CF_\alpha)=\rank \CF_\alpha$. For any symmetric function $f$ of degree $k$ and any $\gamma\in H^m$ we set
\[
f(\gamma) = \pi_{\CM_\alpha *} \left(f(\CF_\alpha) \cup \pi_{S}^* \gamma\right) \in H^{2k+m-4}(\CM_\alpha),
\]
where $\pi_{\CM_\alpha}:\CM_\alpha\times S\to \CM_\alpha$, $\pi_{S}:\CM_\alpha\times S\to S$ are the projections.

The classes $f(\gamma)$ generate a subring $H^*_\taut(\CM_\alpha)\subset H^*_\pure(\CM_\alpha)$, which we call the \emph{tautological ring}. 

Let us write $\Delta\gamma\coloneqq\Delta (\gamma\otimes 1) = \Delta (1\otimes \gamma)$.
The product of symmetric functions is related to the product in $H^*_\taut(\CM_\alpha)$ by
\begin{equation}\label{eq:prod-of-sym-fts}
    (fg)(\gamma) = (f\otimes g)(\Delta\gamma),
\end{equation}
where the right hand side is defined via $(f\otimes g)(\gamma_1\otimes\gamma_2) = f(\gamma_1) g(\gamma_2)$.

Recall the tautological sheaf $\CF_\delta$ on $\Coh_\delta\times S$.
We have $\CF_\delta = \CO_\Delta(1)$, where $\CO_\Delta$ is the structure sheaf of the diagonal in $S\times S$ and $(1)$ means tensoring with the weight $1$ character of $\mathbb{G}_m$. The Chern character of $\CO_\Delta$ is $\Delta \Td^{-1}_S$, therefore
\[
\ch(\CF_\delta) = \ch(\CO_\delta)\ch(\CO(1)) = \Delta \Td^{-1}_S e^u,
\]
where $u=c_1(\CO(1))$ is the equivariant parameter. We deduce that
\[
p_n(\CF_\delta)(\eta) = \frac{u^n - (u-t_1)^n - (u-t_2)^n + (u-t_1-t_2)^n}{t_1 t_2} \eta.
\]

\subsection{The action of the Hecke operators on $1$}\label{sec: hecke on one}
Let us explicitly compute the action of Hecke correspondences $T(\xi u^n)$ on the subspace $\bigoplus_n H^*_{\taut}(\CM_{\alpha_0+n\delta})$. Consider the space $\CZ_\alpha\times S$ equipped with the maps $\pi_{i4}=\pi_i\times\pi_{S} $, ($i=1,2,3$).
The projection formula together with the identity $[\pi_{14}^* \CF_\delta] + [\pi_{24}^* \CF_{\alpha-\delta}] = [\pi_{34}^* \CF_{\alpha}]$ in $K_0(\CZ_\alpha\times S)$ imply the following identity of operators $H^*(\CM_{\alpha-\delta}) \to H^*(\CM_\alpha)$:
\begin{equation}\label{eq:hecke commutation}
    [p_n(\eta), T(\xi)] = T\left(\frac{u^n - (u-t_1)^n - (u-t_2)^n + (u-t_1-t_2)^n}{t_1 t_2} \eta\xi \right)
\end{equation}
for any $\eta \in H$, $\xi \in H^*(\Coh_\delta)$ and $n \geq 0$.
The commutator on the left hand side is \textit{graded}, i.e. if both $\eta$ and $\xi$ are odd we take the sum instead of the difference.

Thanks to the commutation relations~\eqref{eq:hecke commutation}, in order to fully determine the action of the Hecke operators on the tautological ring $H^*_\taut(\CM_\alpha)$ we only need to compute the action of each Hecke operator on $1$.

The map $\CZ_\alpha\to \CM_\delta = S \times B\BG_m$ has two components. The component corresponding to $S$ comes from the projection $\BP(\CE_0)\to S$. The component corresponding to $B\BG_m$ is given by the tautological line bundle $L$ on $\BP(\CE_0)$. The class $c_1(L)$ thus gets identified with the pullback of $u$. We will denote $c_1(L)=u$ by abuse of notation.

Recall that $\CZ_\alpha$ is presented as the zero set of a section of the vector bundle $\CE_1^*(1)$ on $\BP(\CE_0)$. The projection $\pi_3$ factors as the composition of proper maps
\[
\CZ_\alpha \to \BP(\CE_0) \to \CM_\alpha \times S \to \CM_\alpha.
\]
 We have
\[
H^*(\BP(\CE_0)) = H^*(\CM_\alpha \times S)[u]/\left(u^m - c_1(\CE_0) u^{m-1} + \cdots + (-1)^m c_m(\CE_0)\right),
\]
where $m$ is the rank of $\CE_0$.
The Gysin map $H^*(\BP(\CE_0))\to H^*(\CM_\alpha \times S)$ applied to a polynomial $p(u)$ can be computed as follows: expand
\[
\frac{p(u)}{u^m - c_1(\CE_0) u^{m-1} + \cdots \pm c_m(\CE_0)}
\]
as a power series in $u^{-1}$, and take the coefficient of $u^{-1}$. On the other hand, the Euler class of the bundle $\CE_1^*(1)$ is given by the expression
\[
u^{m'} - c_1(\CE_1) u^{m'-1} + \cdots + (-1)^m c_{m'}(\CE_1),
\]
where $m'$ is the rank of $\CE_1$.
So the result of taking an element $u^n$, multiplying it by the Euler class of $\CE_1^*(1)$ and applying the Gysin map to $H^*(\CM_\alpha \times S)$ is the coefficient of $u^{-n-1}$ in the expansion of
\[
\frac{u^{m'} - c_1(\CE_1) u^{m'-1} + \cdots \pm c_{m'}(\CE_1)}{u^m - c_1(\CE_0) u^{m-1} + \cdots \pm c_m(\CE_0)} = \sum_i u^{-\rank \CF_\alpha-i} s_i(\CF_\alpha),
\]
which is the generating series of the Segre classes of $\CF_\alpha = \CE_0/\CE_1$\footnote{This also follows from the general theory of Segre classes, see \cite{fulton1998intersection}.}. 
In our notation, the Segre classes correspond to the complete homogeneous symmetric functions $h_n$. Hence we obtain
\begin{equation}\label{eq:hecke segre}
    T(\xi u^n)(1) = h_{n+1-\rank\CF_\alpha}(\xi),\qquad \xi\in H,\; n\geq 0.
\end{equation}

\subsection{The action of the Hecke operators on tautological classes}
Equations \eqref{eq:hecke commutation} and \eqref{eq:hecke segre} may be used to write the action of Hecke operators in a closed form. For this, let us introduce the super-commutative ring
\begin{equation}\label{eq:def Lambda S}
\Lambda_S\coloneqq \BQ[p_k(\eta)\;;\; k \geq 0, \eta \in H]/R
\end{equation}
where $\deg(p_k(\eta))=2k-4+\deg\eta$, and
the ideal $R$ is generated by the following relations: 
\begin{equation}\label{eq:taut-ring-rels}
\begin{aligned}
    p_k&(\eta+\lambda)=p_k(\eta)+p_k(\lambda), \quad p_k(a\eta)=ap_k(\eta),\\
    p_0&([\mathrm{pt}])=\rank \CF_\alpha, \qquad p_k(\lambda) =0 \;\text{if}\; 2k+\deg \lambda <4
\end{aligned}
\end{equation}
for any $k \geq 0$, $\eta,\lambda \in H$ and $a\in \BQ$.
The ring $\Lambda_S$ may be interpreted as the universal ring of tautological classes, that is, for any $\alpha$ there is a canonical evaluation morphism $\Lambda_S \to H^*(\CM_\alpha)$ sending $p_k(\lambda)$ to $p_k(\lambda)$. 
Set $\V_S=\bigoplus_{\alpha \in \alpha_0+\BN\delta} H^*(\CM_\alpha)$ and consider the linear map
\begin{equation*}
\begin{split}
\Phi\colon\Lambda_S\otimes H[u] &\to \End(\V_S),\\
f\otimes \xi u^n &\to f T(\xi u^n)
\end{split}
\end{equation*}
for $f\in\Lambda_S,\;\xi\in H,\; n\geq 0$. Note that $\Phi$ is \textit{not} a ring homomorphism, but a morphism of $\Lambda_S$-modules for the action of $\Lambda_S$ on $\Lambda_S\otimes H[u]$ by left by multiplication. Next, let us define a ring homomorphism
\begin{equation*}
\begin{split}
R\colon\Lambda_S &\to \Lambda_S\otimes H[u],\\
p_n(\eta) &\to p_n(\eta)-\frac{u^n - (u-t_1)^n - (u-t_2)^n + (u-t_1-t_2)^n}{t_1 t_2} \eta,
\end{split}
\end{equation*}
and a linear map
$$Q: \Lambda_S \otimes H[u] \to \Lambda_S, \qquad f\otimes \xi u^n \mapsto f h_n(\xi).$$
In view of \eqref{eq:hecke commutation} and \eqref{eq:hecke segre}, we have
\begin{equation}\label{eq:T from Q R}
T(\xi u^n)(f\cdot 1)=\Phi(\xi u^n\cdot R(f))(1)=Q(\xi u^{n-\rank\;\CF_\alpha+1} R(f))
\end{equation}
for any $f \in \Lambda_S, \xi \in H$ and $n\geq 0$. This completely describes the Hecke action on $\V_S^{\taut}\coloneqq\bigoplus_{\alpha \in \alpha_0+\BN\delta} H^*_{\taut}(\CM_\alpha)$.

\subsection{The case of an open surface}\label{sec:open surface}
So far, we have only considered the case of a projective surface $S$. All of the results above naturally extend to the quasiprojective case, see \cite{MMSV}. More precisely, let $S_0\subsetneq S$ be an open subset and assume that $\CM_\alpha$ parameterizes sheaves with (proper) support in $S_0$. This automatically implies that such sheaves are of rank zero.
In this situation, one can still define Hecke correspondences
$$H^*(\Coh(S_0)_\delta) \otimes H^*(\CF_\alpha) \to H^*(\CF_{\alpha+\delta})$$ 
via the diagram \eqref{diag:Hecke corr}, provided that the assumptions~\eqref{assumption-a}, \eqref{assumption-b} hold. The tautological class $f(\gamma) \in H^*(\CM_\alpha)$ only depends the restriction of $\gamma$ to $H^*_{\pure}(S_0)$; indeed, 
if $\gamma\in\kernel (H^*(S) \to H^*(S_0))$ then the long exact sequence in Borel-Moore homology shows that $\pi_{S}^* \gamma$ comes from $H_*(\CM_\alpha\times (S\setminus S_0))$, on which the cap product by $\ch \CF_\alpha$ vanishes. This allows us to consider a ring of tautological classes $\Lambda_{S_0}$ defined as in \eqref{eq:def Lambda S} by replacing $H^*(S)$ with $H^*_{\pure}(S_0)$, as well as a family of Hecke operators $T(\xi u^n)$ for $\xi \in H^*_{\pure}(S_0)$ and $n\geq 0$ acting on $\V_S$. In that setting, the formula~\eqref{eq:T from Q R} remains valid by an identical argument.

\begin{rem}
    One caveat to keep in mind is that the value $h_0(\gamma)=1(\gamma)$ is undefined, but because $\rank\CF_\alpha=0$, the argument of $Q$ in \eqref{eq:T from Q R} has only strictly positive powers of $u$, so that $h_0(\gamma)$ never occurs.
\end{rem}

\section{W-algebras and the Fock space representation}\label{sec:fock space}
\subsection{The Fock space}\label{sec:Fock general}
In order to study the relations between the Hecke operators $T(\xi u^n)$ acting on $\V_S$, it is useful to recast the results of the previous section in a more general, purely algebraic context. We do this by studying an algebra of operators defined by \eqref{eq:hecke segre} on a polynomial ring $\Lambda_S$.

 Let us summarize the data upon which our constructions depend.
\begin{itemize}
    \item A base ring $\Bk$, which is graded super-commutative;\footnote{In the classical setup we have $\Bk=\BQ$, but when a group $G$ acts on $S$ preserving $\CM_\alpha$ we take $\Bk=H^*(BG)$. Another possibility is a relative situation when $S$ is replaced by a family of surfaces and $\Bk$ is the cohomology ring of the base.}
    \item A graded super-commutative ring $H$ over $\Bk$;
    \item An augmentation map $\eps:H\to \Bk$, which is a $\Bk$-linear map of degree $-4$;
    \item An element $\Delta\in H\otimes_\Bk H$ of degree $4$ satisfying $(\veps\otimes\Id)(\Delta) = (\Id\otimes\veps)(\Delta) = 1$ and $(\xi\otimes 1) \Delta = (1\otimes \xi) \Delta$ for all $\xi\in H$;
    \item Fixed elements $c_1\in H^2$, $c_2\in H^4$, satisfying $\Delta^2 = t_1 t_2\Delta$. We formally write $c_1=t_1+t_2$ and $c_2=t_1 t_2$ and identify symmetric functions in $t_1, t_2$ with polynomials in $c_1, c_2$.
    Geometrically, they correspond to the Chern classes of the tangent bundle of $S$.
\end{itemize}

In the above setting, we consider the graded super-commutative algebra $\Lambda_H=\bigoplus_{n \geq 0} \Lambda_H^n$ generated over $\Bk$ by elements $p_n(\xi)$, $ n \geq 0, \xi \in H$ subject to the relations~\eqref{eq:taut-ring-rels}.
The degree of $p_n(\xi)$ is $2n-4+i$ for $\xi\in H^i$. We extend the notation $f(\xi)$ to any symmetric function $f$ by requiring
\[
1(\xi) = \eps(\xi),\qquad (fg)(\xi) = (f\otimes g)(\Delta \xi).
\]

Finally, we define the Fock space $$\bfF_H\coloneqq \Lambda_H[s] \simeq  \bigoplus_{n\geq 0} \Lambda_Hs^n.$$
This is a $\BN^2$-graded vector space with $\bfF_H[a,b]=\Lambda_H^a s^b$. We call the first, resp. second grading the \textit{vertical}, resp. \textit{horizontal} grading.

\subsection{The operators}
Consider the ring morphism
$R\colon \Lambda_H\to \Lambda_H\otimes_\Bk H[u]$ defined by 
$$R(p_n(\eta)) = p_n(\eta)-\frac{u^n - (u-t_1)^n - (u-t_2)^n + (u-t_1-t_2)^n}{t_1 t_2} \eta, \qquad n \geq 0, \eta \in H,
$$
and a $\Lambda_H$-linear map
$$Q\colon\Lambda_H\otimes_\Bk H[u] \to \Lambda_H, \qquad 1 \otimes \eta u^n \mapsto h_n(\eta), \qquad n \geq 0, \eta \in H.$$ 
For each $\xi\in H$, $n\in \BN$ we define $T_n(\xi):\Lambda_H\to\Lambda_H$ to be the operator
\begin{equation}\label{eq:T_n action}
T_n(\xi)(f) = Q\left(\xi u^n R(f)\right).
\end{equation}
Define also for each $\xi\in H$, $n\geq 0$ an element $\psi_n(\xi)\in \Lambda_H$ by the relation
\begin{equation}\label{eq:psi definition}
    \psi_n(\xi) = \sum_{1\leq m\leq n+1} \frac{n!}{m!} p_m(\Td_{n+1-m} \xi),
\end{equation}
where $\Td_{k}\in H^{2k}$ are the coefficients of the (abstract) Todd class:
\[
\sum_{k\geq 0} \Td_{k} x^k = \frac{t_1 t_2 x^2}{(1-e^{-t_1 x})(1-e^{-t_2 x})} \in H[[x]].
\]
We will regard $\psi_n(\xi)$ as an operator on $\Lambda_H$ by left multiplication.

\begin{rem}
When $H=H^*(S)$, the $\psi_n$'s correspond to the coefficients of $\Td_S \ch(\CF_\alpha)$, and the generating series of $\psi$ is
\[
\sum_{n\geq 0} \frac{x^n \psi_n}{n!} = \left(\sum_{n\geq 1} \frac{x^n p_n}{n!}\right) \frac{t_1 t_2 x}{(1-e^{-t_1 x})(1-e^{-t_2 x})},
\]
Observe that any $p_n$ for $n\geq 1$ is a linear combination of $\psi_m$'s. The remaining element $p_0$ will not play a role in this work (but see \cite[1.6]{MMSV} for its geometric meaning).    
\end{rem}

We regard the operators $T_n(\xi), \psi_m(\xi)$ as acting on $\bfF_H$ in the following way:
$$T_n(\xi): \bfF_H[a,b] \to \bfF_H[a+2n+\xi,b+1],$$
$$\psi_m(\xi):\bfF_H[a,b] \to \bfF_H[a+2m-2+\xi,b],$$
i.e. $T_n(\xi), \psi_m(\xi)$ are of respective horizontal degrees $1$ and $0$.

\begin{thm}\label{thm:surface W relations} Set $s_2=c_1^2 - c_2$. The action of the operators $\psi_m(\xi), T_n(\xi)$, $m,n\geq 0$, $\xi\in H$ on $\bfF_H$ satisfies the following relations:
    \begin{equation}\label{eq:Q -1}
    [\psi_m(\xi), \psi_n(\eta)] = 0,\quad \psi_m(a\xi + b\eta) = a\psi_m(\xi) + b\psi_m(\eta),
    \end{equation}
    \begin{equation}\label{eq:Q0}
        [\psi_m(\eta), T_n(\xi)] = m T_{m+n-1}(\eta\xi),
    \end{equation}
    \begin{equation}\label{eq:Q1}
        [T_m(\xi\xi'), T_n(\xi'')] = [T_m(\xi), T_n(\xi'\xi'')],
    \end{equation}
    \begin{multline}\label{eq:Q2}
        [T_{m}(\xi), T_{n+3}(\xi')] - 3 [T_{m+1}(\xi), T_{n+2}(\xi')] + 3 [T_{m+2}(\xi), T_{n+1}(\xi')] - [T_{m+3}(\xi), T_{n}(\xi')]\\
        - [T_{m}(\xi), T_{n+1}(s_2 \xi')] + [T_{m+1}(\xi), T_{n}(s_2 \xi')] + \{T_m, T_n\}(c_1\Delta\xi \xi')=0,
    \end{multline}
    \begin{equation}\label{eq:Q3}
        \sum_{\pi\in S_3} \pi [T_{m_3}(\xi_3), [T_{m_2}(\xi_2), T_{m_1+1}(\xi_1)]] = 0.
    \end{equation}
\end{thm}
\begin{proof} This results from a direct computation, which is performed in \cite[Sec.~3]{MMSV} in the case of $H=H^*(S)$. The general case is identical. 
\end{proof}

\begin{rem}
    All the commutators above a super-commutators.
    In \eqref{eq:Q2}, $\{T_m, T_n\}(c_1\Delta\xi \xi')$ stands for the super-anticommutator of the operators $T_m$, $T_n$ whose arguments are taken from the tensor $s_1\Delta\xi \xi'\in H\otimes H$. In \eqref{eq:Q3}, $\pi$ permutes both the indices $m_i$ and elements $\xi_i$. Moreover, when we permute $\xi_i$ the sign changes according to the usual rules depending on the parity of $\xi_i$.
\end{rem}

\subsection{Surface $W$-algebra}
Let us now return to the setting of a smooth quasiprojective surface $S$. Let $H=H^*_{\pure}(S)$, and write $\bfF_S,\Lambda_S,\ldots$ instead of $\bfF_H, \Lambda_H,\ldots$. Note that there is a canonical evaluation morphism $\mathrm{ev}\colon\bfF_S \to \V_S$.
\begin{defn}
    The \emph{surface $W$-algebra}\footnote{There is a notational clash with \cite{MMSV}, where $W_S$ is denoted $W^+_S$, as it corresponds to \textit{positive} punctual modifications.} $W_S$ is the algebra generated by elements $\psi_m(\pi)$, $T_m(\pi)$, where $m\geq 0$ and
    $\pi$ runs over a basis of $H$, modulo relations
    \eqref{eq:Q -1}--\eqref{eq:Q3}.
\end{defn}
Note that $W_S$ is naturally $\mathbb{N}$-graded, where we put $\psi_m(\xi)$ and $T_n(\xi)$ in degrees $0$ and $1$ respectively. 
From the results of Section~\ref{sec:hecke}, we have 
\begin{cor}\label{cor:surface W} The canonical evaluation morphism $\text{ev}\colon\bfF_S \to \V_S$, together with the assignment
$$\psi_n(\xi) \mapsto \psi_n(\xi), \qquad T_m(\xi) \mapsto T(\xi u^{m-1+\rank\;\CF_\alpha})$$
intertwines the action of $W_S$ on $\bfF_S$ and of the corresponding tautological and Hecke operators on $\V_S$.
\end{cor}
The setup of Section~\ref{sec:Fock general} only applies to projective surfaces, since otherwise we do not have an augmentation map $\eps$. However, in the quasiprojective case one may fix a compactification of $S$ and argue as in Section~\ref{sec:open surface}. As the relations \eqref{eq:Q -1}--\eqref{eq:Q3} are invariant under index shifts in the generators $T_m$, the assignment
$$\psi_n(\xi) \mapsto \psi_n(\xi), \qquad T_m(\xi) \mapsto T(\xi u^{m})$$
still yields a representation of $W_S$ on $\V_S$. From now on, we consider this renormalized action. Beware that with this renormalization, formula \eqref{eq:T_n action} only holds up to an index shift of $(1-\rank \;\CF_\alpha)$.


\subsection{The structure of $W_S$}\label{sec:undeformed}

Let us begin with the simpler case:
\begin{thm}[{\cite[Thm.~3.5]{MMSV}}] \label{thm:W undeformed}
Suppose $S$ is such that both $s_2\in H^4(S)$ and $s_1\Delta\in H^6(S\times S)$ vanish. Then $W_S$ is isomorphic to the universal enveloping algebra of the Lie algebra with basis $D_{m,n}(\pi)$, where $m,n\in\BZ_{\geq 0}$ and $\pi$ runs over a basis of $H^*(S)$, and Lie bracket given by
    \begin{equation}\label{eq:W relations}
        [D_{m,n}(\xi), D_{m',n'}(\eta)] = (n m' - m n') D_{m+m',n+n'-1}(\xi\eta).
    \end{equation}
    The generators are related by $D_{0,n}(\xi) = \psi_{n}(\xi)$, $D_{1,n}(\xi) = T_n(\xi)$.
More generally, for any $\xi \in H$ and $n,m \geq 0$ we have
 \[\label{eq:def D m n undeformed}
    D_{m,n}(\xi) = \frac{n!}{(m+n)!}(-\Ad_{T_0(1)})^m \psi_{m+n}(\xi).
    \]
\end{thm}

\begin{rem}\label{rem:undeformed}
    The assumptions of the theorem are satisfied in the following cases:
    \begin{itemize}
        \item $S$ is an abelian surface,
        \item $S$ is a K3 surface without a point,
        \item $S$ is the total space of a line bundle over a curve.
    \end{itemize}
\end{rem}

The same description works for an arbitrary $S$, but only after passing to the associated graded with respect to a suitable filtration. We write $W_S =\bigoplus_m W_S[m]$ for the decomposition into graded pieces for the horizontal grading. Let $F_\bullet$ be the smallest filtration on $W_S$ such that for all $m,n\in\BZ$ and $\xi\in H$ we have
$$\psi_n(\xi), T_n(\xi) \in F_n,\qquad F_m F_n \subset F_{m+n}, \qquad [F_m, F_n] \subset F_{m+n-1}.$$
\begin{thm}[{\cite[Sec. 3]{MMSV}}]\label{thm:deformed W}
There exist elements $D_{m,n}(\xi)\in W_S[m]$, $m,n\geq 0, \xi\in H$ such that
    \begin{enumerate}
        \item $D_{0,n}(\xi) = \psi_n(\xi)$, $D_{1,n}(\xi)=T_n(\xi)$;
        \item $F_{-1}=0$ and $F_N$ is spanned by products $D_{m_1, n_1}(\xi_1)\cdots D_{m_1, n_k}(\xi_k)$ satisfying $n_1+\cdots+n_k\leq N$;
        \item $[D_{m,n}(\xi), D_{m',n'}(\eta)] = (n m' - m n') D_{m+m',n+n'-1}(\xi\eta)$ modulo $F_{n+n'-3}$.
    \end{enumerate}
\end{thm}

The following elements will play an important role in the next section:
$$q_m(\xi) \coloneqq D_{m,0}(\xi), \qquad L_m(\xi) \coloneqq D_{m,1}(\xi), \qquad  \frakd\coloneqq \frac12 \psi_2(1); \qquad m\geq 0, \xi\in H.$$ 
Although we will not be using this, we quote for completeness a simple corollary of Theorem~\ref{thm:W undeformed}, which explains the link of $W_S$ with (a half of) the Virasoro algebra and the element $W_3(0)$ of the Zamolodchikov algebra.

\begin{cor}\label{cor:q L}
The elements $q_m(\xi)$, $L_m(\xi)$, $m\geq 0$, $\xi\in H$ and $\frakd=\frac12 \psi_2(1)$ satisfy relations
\[
[q_m(\xi), q_n(\eta)] = 0,\quad [L_m(\xi), q_n(\eta)] = n q_{m+n}(\xi\eta), \quad [L_m(\xi), L_n(\eta)] = (n-m) L_{m+n}(\xi\eta),
\]
\[
[\frakd, q_m(\xi)] = m L_m(\xi).
\]
\end{cor}

\section{Degeneration and the $\liesl_2$}\label{sec:sl2}
We keep the geometric setup of Section~\ref{sec:hecke}; namely, a smooth quasiprojective surface $S$ with a smooth compactification $\overline{S}$. We will write classes in $H^{\mathrm{ev}}$ as $\alpha=\alpha^{(0)}+\alpha^{(1)}+\alpha^{(2)}$, where $\alpha^{(i)}\in H^{2i}(\overline{S})$. Note that only $\alpha^{(2)}$ changes when we replace $\alpha$ by $\alpha+\delta$. From now on, we will fix some $\alpha_0=\alpha_0^{(1)} + \alpha_0^{(2)}$ and consider only classes $\alpha \in \alpha_0 + \BZ \delta$. In particular, for any moduli space $\mathcal{M}_\alpha$ we have $\rank \CF_\alpha=0$, and $\alpha^{(1)}$ is a class of an effective divisor. For any $\eta \in H^2$ we have $\psi_0(\eta) = c_1(\CF_\alpha)\cdot\eta = \alpha^{(1)} \cdot \eta \in \mathbb{Q}$, and so there exists an effective divisor class $\eta\in H^2$ such that $\psi_0(\eta)>0$. We fix such $\eta$ and set
    \[
    r \coloneqq \psi_0(\eta) = \alpha^{(1)} \cdot \eta > 0.
    \]
We make the following further assumptions:
\begin{enumerate}[label=(A\arabic*),ref=A\arabic*]
    \setcounter{enumi}{2}
    \item \label{assumption-c} For any $\alpha$ there is an isomorphism $\phi:M_\alpha \times B\BG_m \simeq \CM_\alpha$ for some smooth scheme $M_\alpha$.
    \item \label{assumption-d} Tensoring by $\CO(\eta)$ induces isomorphisms $\CM_\alpha \simeq \CM_{\alpha+r\delta}$.
\end{enumerate}
It follows by \cite[Lemma 3.10]{heinloth2010lectures} from assumption~\eqref{assumption-c} that there exists a sheaf $F_\alpha$ on $M_\alpha\times S$ such that $(\phi\times\Id_S)^* \CF_\alpha\cong F_\alpha(1)$, where $(1)$ denotes tensoring with the weight one character of $\BG_m$. Note that $F_\alpha$ depends on the choice of isomorphism $\phi$ and is therefore not canonical. We have an isomorphism
\begin{equation}\label{eq:trivial gerbe in cohomology}
    \phi^*:H^*(\CM_\alpha) \to H^*(M_\alpha\times B\BG_m)=H^*(M_\alpha)[v].
\end{equation}

Using the sheaf $F_\alpha$, we can define the tautological classes $f(\gamma)\in H^*(M_\alpha)$ for any symmetric function $f$ and $\gamma\in H$ as in Section~\ref{subs:taut-classes}.
We denote the subring generated by these classes by $H^*_\taut(M_\alpha)\subset H^*(M_\alpha)$.

\begin{lem}\label{lem:taut-coarse}
    Under the assumption~\eqref{assumption-c}, we have $H^*_\taut(\mathcal{M}_\alpha)\simeq H^*_\taut(M_\alpha)[v]$.
\end{lem}
\begin{proof}
    From the definition of $F_\alpha$, we have $\ch(\mathcal{F}_\alpha) = \ch(F_\alpha)e^v$; in particular, we have an inclusion $H^*_\taut(\mathcal{M}_\alpha)\subset H^*_\taut(M_\alpha)[v]$.
    For the opposite direction, note that
    \[
        \psi_0([\mathrm{pt}]) = \int_{\overline{S}} c_1(\mathcal{F}_\alpha)\cdot [\mathrm{pt}] = v + \int_{\overline{S}} c_1(F_\alpha)\cdot [\mathrm{pt}] = v.
    \]
    Thus $v\in H^*_\taut(\mathcal{M}_\alpha)$, and we conclude by the equality $\ch(F_\alpha) = \ch(\mathcal{F}_\alpha)e^{-v}$.
\end{proof}

For the future use, we note the following expressions of some tautological classes of low degree, in terms of the identification \eqref{eq:trivial gerbe in cohomology}.
$$ \psi_0(\eta)=r, \qquad \psi_1(\eta)=\frac{1}{2}p_2(\eta)+ \int_{\overline{S}} c_1(F_\alpha) \cup (t_1+t_2) \cup \eta , \qquad
\frac{1}{2}p_2(\eta) \in rv + H^2(M_\alpha).$$

\subsection{Periodicity}
Observe that by assumption~\eqref{assumption-d}, tensoring with $\CO(\eta)$ induces an isomorphism $H^*(\CM_\alpha)\cong H^*(\CM_{\alpha+ r\delta})$ preserving the tautological rings.
The following result provides a canonical identification between the cohomology spaces $H^*_{\taut}(\CM_\alpha)$ for all $\alpha \in \alpha_0+\BN \delta$.
     \begin{prop}\label{prop:Hecke is iso}
         The Hecke operator $T_0(\eta)$ induces an isomorphism of vector spaces $H^*_\taut(\CM_\alpha)\cong H^*_\taut(\CM_{\alpha+\delta})$.
     \end{prop}
     \begin{proof}
    Let us show that $T_0(\eta)$ is surjective; this will imply the chain of inequalities
    \[
         \dim H^i_\taut(\CM_\alpha) \geq \dim H^i_\taut(\CM_{\alpha+\delta}) \geq \cdots \geq \dim H^i_\taut(\CM_{\alpha+r\delta}),
         \]
         and we will deduce that $T_0(\eta)$ is an isomorphism from \eqref{assumption-d}.

         Let $f$ be a polynomial in the generators $\psi_i(\xi)$. By Theorem \ref{thm:deformed W} we know that some power $n$ of $\Ad_{T_0(\eta)}$ annihilates $f$. We show by induction on $n$ that $f(1)$ is in the image of $T_0(\eta)$. The induction base is not needed because the case $f=0$ is trivial. Now suppose that we have proven the claim for $[T_0(\eta), f]$, so that we have
         \[
         [T_0(\eta), f](1) = T_0(\eta)(g)
         \]
         for some $g\in H^*_\taut(\CM_{\alpha})$. Then we have
         \[
         T_0(\eta)(f(1) - g) = f  T_0(\eta)(1) = (\eta\cdot\alpha^{(1)}) f (1)=r f(1),
         \]
         hence $f(1)$ is in the image of $T_0(\eta)$.
     \end{proof}

Combining Proposition~\ref{prop:Hecke is iso}, Lemma~\ref{lem:taut-coarse} and \eqref{eq:trivial gerbe in cohomology} we obtain an isomorphism $\V^{\taut}_S \simeq H^*_\taut(M_{\alpha})[v,s]$, valid for any $\alpha \in \alpha_0 + \BN \delta$. Our next goal is to extract from the action of $W_S$ on $\V^{\taut}_S$ an action of a smaller algebra $\Wred$ on $H^*_\taut(M_\alpha)$. We will achieve this in two steps. In Sections~\ref{sec:pol psi}-\ref{sec:def wt D} we construct an algebra $\widetilde{W}_{\mathcal{M}}$ acting on $H_\taut^*(\CM_\alpha) \simeq H_\taut^*(M_\alpha)[v]$. The construction of $\Wred$ is then given in Section~\ref{ssec:reduction weyl}, by reduction with respect to a suitable action of a Weyl algebra.
     
\subsection{Polynomiality of $\psi_k(\xi)$}\label{sec:pol psi}
    
    \begin{lem}\label{lem:finite-renorm}
        Let $A = \bigoplus_{i\in \BN} A_i$ be a graded connected finite dimensional $\BQ$-algebra, $B = A[v]$ with $v$ of degree $2$, and $\{b_i\in B_{2i}\}_{i\in\BN}$ a collection of elements.
        Write $y = v + a$, where $a\in A$ is of degree $2$.
        Then there exists a finite collection of elements $\{Q_i\in A_{2i}\}_{i\in \BN}$ such that for all $n$
        \[
            b_n = \sum_{i=0}^n \binom{n}{i} Q_i y^{n-i},
        \]
        and $Q_i$'s are polynomials in $b_i$'s and $a$.
    \end{lem}
    \begin{proof}
        Consider the sum $b = \sum_{i\in \BN} b_i$, and expand it in the powers of $v$.
        Consider also the Laurent series $P_0(x) = 1/(1-x)$, and let $P_{k} = k^{-1}\frac{\partial}{\partial x}P_{k-1}$ for $k>0$.
        Define $Q_k\in A^{2k}$ inductively to be the constant term of $v^{-(k-1)}\left(b - \sum_{i=0}^{k-1} Q_i P_i(y)\right)$.
        Since $A$ is finite dimensional, $Q_i = 0$ for $i>N$ big enough, and so
        \[
            \sum_n b_n = \sum_{i=0}^N Q_i P_i(y) = \sum_n \left(\sum_{i=0}^n \binom{n}{i}Q_i y^{n-i}\right).\qedhere
        \]
    \end{proof}

Let us set
$$y=\frac{\psi_1(\eta)}{r}.$$
Using the above Lemma with $A = H^*_{\taut}(M_\alpha\times S)$ and applying $(\pi_{M_\alpha})_*(-\cup \xi)$ yields the following result.

\begin{cor}\label{prop:polynomiality1}
    There exists a family of elements $Q_i\in H^{2i}_\taut(M_\alpha\times S)$, $i\geq 0$ such that for any $n \geq 0$, we have the following equality in $H^*_\taut(\mathcal{M}_\alpha)$:
        \[
            \psi_n(\xi) = \sum_{i=0}^n \binom{n}{i} Q_i(\xi) y^{n-i},\qquad Q_i(\xi)\coloneqq (\pi_{M_\alpha})_*(Q_i\cup \xi).
        \]
    Moreover, $Q_n(\xi)$ is zero for $n$ large enough.
    \end{cor}

\subsection{Polynomiality of $D_{m,n}(\xi)$}\label{sec:pol D}
Let us now set
    \[
    X = \frac{T_0(\eta)}r = \frac{q_1(\eta)}{r}. 
    \]
By Proposition~\ref{prop:Hecke is iso}, the operator $X$ identifies all the spaces $H^*_\taut(\CM_{\alpha})$ for $\alpha \in \alpha_0+\BZ\delta$. Let us furthermore define
    \[
    D_{m,n}(\xi) \coloneqq \frac{n!}{(m+n)!}(-\Ad_{q_1(1)})^m \psi_{m+n}(\xi);
    \]
compare this with~\eqref{eq:def D m n undeformed}. These elements satisfy the properties listed in Theorem~\ref{thm:deformed W}.

 \begin{prop}\label{prop:D m n polynomial}
For any $\xi\in H$, $n\geq 0$ and any $\alpha \in \alpha_0+\BZ\delta$, the sequence of operators $X^{-m} D_{m,n}(\xi) \in \End(H^*_\taut(\CM_\alpha))$ depends polynomially on $m$. In the case $n=0$ the non-constant coefficients of the polynomial are nilpotent.
    \end{prop}
    \begin{proof}
    Using Corollary~\ref{prop:polynomiality1} we may write
    \[
    \psi_{m+n}(\xi) = \sum_{i=0}^N \binom{m+n}{i} y^{m+n-i} Q_i(\xi) ,
    \]
    where $N=\dim(M_\alpha)+2$. Let us denote $A \coloneqq -\Ad_{q_1(1)}$. Applying the Leibniz rule, we obtain
    \begin{equation}\label{eq:intermediate1}
    D_{m,n}(\xi) = \frac{n!}{(m+n)!} \sum_{i,j} \binom{m+n}{i} \binom{m}{j} \left(A^{m-j} y^{m+n-i}\right) \left( A^j Q_i(\xi)\right).
    \end{equation}
    Observe that $Q_i(\xi)$, viewed as a tautological class and hence as an element of $W_S$, belongs to $F_i$. By Theorem~\ref{thm:deformed W}, the summands of \eqref{eq:intermediate1} vanish for $j>i$. The sum is therefore finite, and we only need to check that for each fixed triple $(i,j,n)$ the sequence
    \[
     X^{j-m} \frac{n!}{(m+n)!} \binom{m+n}{i} \binom{m}{j} A^{m-j} y^{m+n-i}
    \]
    depends polynomially on $m$.
     Since $X$ and $q_1(1)$ commute, and $\binom{m}{j}$ is manifestly polynomial in $m$, it is enough to show the polynomiality of
    \begin{equation}\label{eq:to show}
     \frac{n!}{(m+n)!} \binom{m+n}{i} \left(X^{-1} A\right)^{m-j} y^{m+n-i}.
    \end{equation}
    Using Corollary~\ref{cor:q L}, we have $[y, q_1(1)] = r^{-1} [\psi_1(\eta), q_1(1)] = r^{-1} q_1(\eta) = X$ and similarly $[y, X] = r^{-2} q_1(\eta^2)$, which commutes with $X$ and $y$.
    We deduce that for any $k\geq 0$
    \[
    -X^{-1} \Ad_{q_1(1)} y^k = X^{-1} \sum_{i=0}^{k-1} y^i X y^{k-1-i} = k y^{k-1} + \binom{k}2 X^{-1}[y,X] y^{k-2}
    \]
    \[
    = \left(\frac{\partial}{\partial y} + \frac{1}{2}X^{-1}[y,X] \frac{\partial^2}{\partial^2 y}\right) y^k.
    \]
    Write $z=\frac{1}{2}X^{-1}[y,X]$. The binomial formula gives
    \[
    (-X^{-1} \Ad_{q_1(1)})^{m-j} y^{m+n-i} = \sum_{k=0}^{\min(m-j, n-i+j)} \binom{m-j}k z^k \frac{(m+n-i)!}{(n-i+j-k)!} y^{n-i+j-k}.
    \]
    The upper bound of the summation can be replaced by $n-i+j$. As a result we obtain that \eqref{eq:to show} is a linear combination of operators of the form $z^k y^{n-i+j-k}$, and each such operator enters with a coefficient equal to
    \[
    \frac{n!}{(m+n)!} \binom{m+n}{i} \binom{m-j}k \frac{(m+n-i)!}{(n-i+j-k)!} = \frac{n!}{i!(n-i+j-k)!} \binom{m-j}k,
    \]
    which is a polynomial in $m$.

    Let us now consider the special case $n=0$. The inequalities $j\leq i$ and $0\leq k \leq n-i+j$ imply $k=0$ and $i=j$. So the result is a linear combination of terms of the form
    \[
    \frac{1}{i!} \binom{m}{i} \left( (-\Ad_{q_1(1)})^i Q_i(\xi)\right).
    \]
    Only the terms with $i\geq 1$ contribute to the non-constant coefficients of the polynomial. If $Q_i(\xi)$ has positive cohomological degree, then it is nilpotent, say $Q_i(\xi)^M=0$. Applying $(-\Ad_{q_1(1)})^{Mi}$ and using $(\Ad_{q_1(1)})^{i+1} Q_i(\xi)=0$ we obtain $\left((-\Ad_{q_1(1)})^i Q_i(\xi)\right)^M=0$. If $Q_i(\xi)$ has cohomological degree zero (which only happens for $i=1$, $\xi\in H^0$), then $(-\Ad_{q_1(1)})^i Q_i(\xi)$ has negative cohomological degree as an operator and is therefore nilpotent.
    \end{proof}

    \subsection{Construction of $\wt D_{m,n}(\xi)$ and relations}\label{sec:def wt D}
    Proposition~\ref{prop:D m n polynomial} gives rise to operators on each individual $H^*_\taut(\CM_\alpha)$, by taking the coefficients in the polynomial expansion of $X^{-m}D_{m,n}(\xi)$ in $m$. We will need a slight renormalization.
    Let $\theta$ be the linear term in the polynomial expansion of $X^{-k} q_k(\eta)$. By Proposition~\ref{prop:D m n polynomial} $\theta$ is nilpotent, and thus $e^{k\theta}$, $e^{-k\theta}$ are polynomials in $k$. So Proposition~\ref{prop:D m n polynomial} remains valid when $X^{-k}$ is replaced by $u^{-k}$, where
    \[
    u= X e^{\theta/q_0(\eta)}=X e^{\theta/r}.
    \]
    This choice of $u$ ensures that the linear term in the expansion of $u^{-k} q_k(\eta)$ vanishes. Let us now define operators $\wt D_{i,n}(\xi)$ for all $n,m \in \BN$ and $\xi \in H$ by
    \begin{equation}\label{eq:def td D n m}
    D_{m,n}(\xi) = u^m \sum_i \frac{m^i}{i!} \wt D_{i,n}(\xi).
    \end{equation}
    In particular, note that $\psi_n(\xi)=D_{0,n}(\xi)=\wt D_{0,n}(\xi)$.
    As in Section~\ref{sec:undeformed}, we set 
    $$\wt q_i(\xi)=\wt D_{i,0}(\xi), \qquad \wt L_i(\xi)=\wt D_{i,1}(\xi)$$
    for $i \geq 0$; observe that by construction $\wt q_1(\eta)=0$ and $\wt q_i(\xi) = \wt L_i(\xi) = 0$ for $i\gg 0$. Define $\wt W_{\mathcal{M}}$ as the subalgebra of $\End(H^*_\taut(\CM_\alpha))$ generated by all the operators $\wt D_{m,n}(\xi)$. Finally, denote by $\wt F_n$ the filtration of  $\wt W$ obtained by placing $\wt D_{m,n}(\xi)$ in weight $n$.


\begin{prop}\label{prop:relations rational}
The following relation holds modulo $\wt F_{n+n'-3}$:
\begin{equation*}
\begin{split}
    [\wt D_{m,n}(\xi)&, \wt D_{m',n'}(\xi')] =(n m'-m n') \wt D_{m+m'-1,n+n'-1}(\xi\xi')\\
    &- r^{-1} n m' \wt D_{m,n-1}(\xi\eta) \wt D_{m'-1,n'}(\xi') 
\pm r^{-1} m n' \wt D_{m',n'-1}(\xi'\eta) \wt D_{m-1,n}(\xi)\\
&- 2 r^{-2} \binom{n}2\binom{m'}2 \wt D_{m,n-2}(\xi\eta^2) \wt D_{m'-2,n'}(\xi')\pm 2  r^{-2} \binom{m}2 \binom{n'}2 \wt D_{m',n'-2}(\xi'\eta^2) \wt D_{m-2,n}(\xi),
\end{split}
\end{equation*}
where the sign is $-$ if both $\xi$ and $\xi'$ are odd. When $s_2 = c_1\Delta_S = 0$, the relations hold on the nose.
\end{prop}
\begin{proof}
Let us first find commutation relations involving $u$. From $[\psi_1(\xi), q_k(\eta)] = k q_{k}(\xi\eta)$ and \eqref{eq:def td D n m} we obtain
    \[
    k u^{k-1} [\psi_1(\xi), u] \sum_i \wt q_i(\eta) \frac{k^i}{i!} + u^k \sum_i [\psi_1(\xi), \wt q_i(\eta)] \frac{k^i}{i!} = k u^{k} \sum_i \wt q_i(\xi \eta) \frac{k^i}{i!}.
    \]
Dividing by $u^k$ and taking the linear terms in $k$ we obtain
    \[
        q_0(\eta) u^{-1} [\psi_1(\xi), u] + [\psi_1(\xi), \wt q_1(\eta)] = q_0(\xi\eta).
    \]
    As $\wt q_1(\eta)=0$, this yields
    \begin{equation}\label{eq:psi-u-comm}
        [\psi_1(\xi), u] = u \frac{q_0(\xi\eta)}{r} = u \frac{\psi_0(\xi\eta)}{r}.
    \end{equation}
    Recall the operator $\frakd = \frac{D_{0,2}(1)}{2}$.
    Since $\Ad_{q_i(\xi)}$ lowers the $F$-filtration degree by $1$, we have $\Ad_{q_i(\xi)}^3 \frakd=0$, and so in particular
    \[
        [\frakd, u^k] = k u^{k-1} [\frakd, u] + \binom{k}{2} u^{k-2} [[\frakd,u],u].
    \]
    Expanding $q$ in terms of $\wt q$ in $[\frakd, q_k(\eta)] = k L_{k}(\eta)$ and applying the equation above, we obtain
    \[
        \left(k u^{k-1} [\frakd, u] + \binom{k}{2} u^{k-2} [[\frakd,u],u]\right) \sum_i \wt q_i(\eta) \frac{k^i}{i!} + u^k \sum_i [\frakd, \wt q_i(\eta)]\frac{k^i}{i!}
        = k u^{k} \sum_i \wt L_i(\eta) \frac{k^i}{i!}.
    \]
Dividing by $u^k$, using the relations $\wt q_0(\eta)=r, \wt L_0(\eta)=L_0(\eta)$ and collecting the coefficient of $k^1$ produces
\[
u^{-1} [\frakd, u] - \frac12  u^{-2} [[\frakd,u],u] = \frac{L_0(\eta)}r = \frac{\psi_1(\eta)}r.
\]
Applying $-\Ad_u$ and using~\eqref{eq:psi-u-comm} we thus obtain
\[
u^{-2} [[\frakd, u], u] = \frac{\psi_0(\eta^2)}{r^2} \qquad\Rightarrow\qquad u^{-1} [\frakd, u] = \frac{\psi_1(\eta)}{r} + \frac12\frac{\psi_0(\eta^2)}{r^2}.
\]
Using $[\frakd, D_{m,n}(\xi)] = m D_{m,n+1}(\xi)\mod F_{n-1}$ we obtain by induction on $n$
\[
[D_{m,n}(\xi), u] = u \left( n \frac{D_{m,n-1}(\xi\eta)}r + \binom{n}2 \frac{D_{m,n-2}(\xi\eta^2)}{r^2}\right)\mod F_{n-3},
\]
and then by induction on $k$
\[
[D_{m,n}(\xi), u^k] = u^k \left( k n \frac{D_{m,n-1}(\xi\eta)}r + k^2 \binom{n}2 \frac{D_{m,n-2}(\xi\eta^2)}{r^2}\right)\mod F_{n-3}.
\]
Since this holds for all $m\geq 1$, by linearity the same identity is true with $\wt D$ in place of $D$.

Finally we expand the identity~\eqref{eq:W relations} modulo $F_{n+n'-3}$:
\[
(m' n - m n') u^{m+m'} \sum_k \wt D_{k,n+n'-1}(\xi)\frac{(m+m')^k}{k!}
= \sum_{i,j}\frac{m^i}{i!}\frac{m'^j}{j!} \left[u^m \wt D_{i,n}(\xi), u^{m'} \wt D_{j,n'}(\xi')\right].
\]
The commutator expands as follows:
\begin{align*}
    u^{m+m'}\Big([\wt D_{i,n}(\xi),& \wt D_{j,n'}(\xi')]
+ \left(\frac{m' n}r \wt D_{i,n-1}(\xi\eta) + \frac{m'^2}{r^2} \binom{n}2 \wt D_{i,n-2}(\xi\eta^2)\right) \wt D_{j,n'}(\xi')\\
&\mp \left(\frac{m n'}r \wt D_{j,n'-1}(\xi'\eta) + \frac{m^2}{r^2} \binom{n'}2 \wt D_{j,n'-2}(\xi'\eta^2)\right)  \wt D_{i,n}(\xi)\Big),
\end{align*}
where the sign is $+$ if both $\xi$ and $\xi'$ are odd. Collecting monomials in $m$, $m'$ yields the first claim.
Finally, if $s_2 = c_1\Delta_S = 0$, the identity~\eqref{eq:W relations} holds on the nose by Theorem~\ref{thm:W undeformed}, which proves the second claim.
\end{proof}

\begin{cor}The algebra $\wt W_{\mathcal{M}}$ is generated by $\{\psi_2(1), \wt q_k(\xi)\,|\, k \geq 0, \xi \in H\}$.\qed
\end{cor}

\subsection{Reduction with respect to a Weyl subalgebra}\label{ssec:reduction weyl}
We now come to the second reduction step. Observe that we have
\[
[\psi_1(\eta), \wt q_1(1)] = [\wt D_{0,1}(\eta), \wt D_{1,0}(1)] = D_{0,0}(\eta) - r^{-1} D_{0,0}(\eta^2) D_{0,0}(1) = r
\]
(recall that $D_{0,0}(1)=\psi_0(1)=0$ for degree reasons). Thus the pair of elements 
$$y=\frac{\psi_1(\eta)}{r}, \qquad \partial_y\coloneqq-\wt q_1(1)$$ generate a Weyl subalgebra of $\wt W_{\mathcal{M}}$, i.e. we have $[\partial_y, y]=1$. 

\begin{lem}\label{lem: weyl acts nilp}
The operator $\partial_y$ acts locally nilpotently on $H^*_\taut(\CM_\alpha)$.
The operators $\Ad_y$ and $\Ad_{\partial_y}$ act locally nilpotently on $\wt W_{\mathcal{M}}$.
\end{lem}
\begin{proof}
The first statement is a consequence of the fact that $\partial_y$ is of cohomological degree $-2$. Next, because $\partial_y\in\wt F_0$, the operator $\Ad_{\partial_y}$ acts locally nilpotently on $\wt W_S$. Finally, from the relations in Proposition~\ref{prop:relations rational} one sees that $\Ad_y$ acts nilpotently on $\wt F_m / \wt F_{m-2}$, and hence locally nilpotently on $\wt W_S$.
\end{proof}

Consider the following general setup. Let $A$ be an associative algebra defined over a field $\mathbb{K}$ of characteristic $0$, acting on a $\mathbb{K}$-vector space $V$. Assume that the Weyl algebra $B=\mathbb{K} y \oplus \mathbb{K} \oplus \mathbb{K} \partial_y$ is a Lie subalgebra of $A$, that $\Ad_y$ and $\Ad_{\partial_y}$ both act locally nilpotently on $A$ and $\partial_y$ acts locally nilpotently on $V$.

\begin{lem}\label{lem:reduction weyl general} Let $A,B,V$ be as above. Then
\begin{enumerate}
    \item Multiplication induces an isomorphism of vector spaces $m:\mathbb{K}[y] \otimes A_{\red} \otimes \mathbb{K}[\partial_y] \xrightarrow{\sim} A$, where $A_{\red}$ is the subalgebra of $A$ of elements commuting with both $y$ and $\partial_y$,
    \item The action map defines an isomorphism of vector spaces $V_\red [y] \xrightarrow{\sim} V$, where $V_\red=\kernel(\partial_y)$,
    \item The action of $A$ on $V$ restricts to an action of $A_\red$ on $V_\red$.
    \item The natural projections 
    $A \to A_\red, \; f=\sum_{i,j}f_{i,j}y^i\partial_y^j \mapsto f_\red\coloneqq f_{0,0}$ and
    $V \to V_\red, \; v=\sum_i y^i v_i \mapsto v_\red \coloneqq v_0 $
    are given by the following formulas
    \begin{equation}\label{eq:formula expansion}
    f_\red=\sum_{i,j}\frac{1}{i! j!} y^i\left( \Ad_{y}^j (-\Ad_{\partial_y})^i f\right)  \partial_y^j , \qquad v_\red=\sum_i \frac{y^i(-\partial_y)^i}{i!}v\ .
    \end{equation}
\end{enumerate}
\end{lem}
\begin{proof}We will first show that the map $m:\mathbb{K}[y] \otimes A_\red\otimes \mathbb{K}[\partial_y] \to A$ is both injective and surjective. To begin, a standard argument via applying $\Ad_{y}$ shows that the multiplication map $m_1:A_\red\otimes \mathbb{K}[\partial_y] \to\kernel(\Ad_{\partial_y})$ is injective. To see that it is also surjective we argue by induction on $n$ to prove that $\kernel(\Ad_{\partial_y}) \cap \kernel(\Ad_y^n) \subset \image(m_1)$, using the observation that for $f \in \kernel(\Ad_y^n) \backslash \kernel(\Ad_y^{n-1})$ and $u=\frac{1}{(n-1)!}\Ad_y^{n-1}f$ we have $\Ad_y^{n-1}(u \partial_y^{n-1}-f)=0$. We conclude using the fact that $\Ad_y$ is locally nilpotent. The same argument with $y$ and $\partial_y$ swapped implies that $m$ is an isomorphism. Statement (ii) is classical, and shown in the same fashion; statement (iii) immediately follows. For $v = y^a u$ with $u \in V_\red$, we compute
$$\sum_i \frac{y^i (-\partial_y)^i}{i!}v=\sum_{i=0}^a (-1)^i \frac{a!}{(a-i)!i!}y^a u=\sum_{i=0}^a (-1)^i \binom{a}{i} y^a u=\delta_{a,0}u,$$
proving the second formula of (iv). The first equality of (iv) is shown by a double application of the second one.
\end{proof}

In our situation, Lemma~\ref{lem:reduction weyl general} yield canonical decompositions
\begin{equation}\label{eq:identi stack reduced coarse}
H^*_\taut(\CM_\alpha) = H^*_\taut(\CM_\alpha)_\red [y], \qquad \wt W_{\mathcal{M}} = \Wred[y][\partial_y]
\end{equation}
where $H^*_\taut(\CM_\alpha)_\red = \kernel \partial_y$, and $\Wred$ consists of operators commuting with $y$ and $\partial_y$. Moreover,  $\Wred$ preserves $H^*_\taut(\CM_\alpha)_\red$. Observe that because $y-v \in H^2(M_\alpha)$ is nilpotent, there are identifications
$$H_\taut^*(\CM_\alpha)_\red \simeq H_\taut^*(\CM_\alpha)/ yH_\taut^*(\CM_\alpha) \simeq H_\taut^*(\CM_\alpha)/vH_\taut^*(\CM_\alpha) \simeq H_\taut^*(M_\alpha).$$ 
We have thus obtained an action of $\Wred$ on $H_\taut^*(M_\alpha)$.

\begin{rem}\label{rmk:killing-y}
There are two ways of viewing $H_\taut^*(\CM_\alpha)_\red\simeq H_\taut^*(M_\alpha)$: as the subspace $H_\taut^*(\CM_\alpha)_\red = \kernel \partial_y$ or as the quotient space $H_\taut^*(\CM_\alpha)_\red = \cokernel y$. Since $\psi_m(\xi)$ commutes with $y$, we have
\[
\psi_m(\xi)_\red = \sum_{i} \frac{y^i(-\Ad_{\partial_y})^i}{i!} \psi_m(\xi),
\]
and the action of $\psi_m(\xi)_\red$ is compatible with that of $\psi_m(\xi)$ via the quotient identification. 
Because of this, we will omit subscript $\red$ for the tautological classes in $H^*(M_\alpha)$.
On the other hand, $\wt q_m(\xi)$ commutes with $\partial_y$, and so we have
\[
\wt q_m(\xi)_\red = \sum_{i} \left(\Ad_{y}^i \wt q_m(\xi)\right) \frac{\partial_y^i }{i!}.
\]
Since $\partial_y$ annihilates $H_\taut^*(\CM_\alpha)_\red$, the action coincides with the induced action of $\wt q_m(\xi)$ with respect to the subspace identification. In general, for any operator $f$ the induced action from $\kernel \partial_y$ to $\cokernel y$ coincides with $f_\red$.
\end{rem}

\subsection{$\liesl_2$-triple}\label{ssec:sl2}
Let us extract an $\liesl_2$-triple in $\End(H_\taut^*(M_\alpha))$, which will play a key role later, out of the algebra $\Wred$. Set $\frakh$ to be the operator $-\wt D_{1,1}(1)_\red$. Explicitly, by~\eqref{eq:formula expansion} we have
\[
\frakh = -\wt D_{1,1}(1) - (y - r^{-2}\psi_0(\eta^2) \psi_1(1)) \partial_y.
\]
Taking into account that $\wt D_{1,0}(\eta) = \wt q_1(\eta)=0$, we obtain by Proposition~\ref{prop:relations rational} that 
\[
[\wt D_{1,1}(1), \wt D_{m,n}(\xi)] = (m-n) \wt D_{m,n}(\xi) + r^{-1} n \wt D_{m,n-1}(\eta\xi) \psi_1(1)\mod \wt F_{n-2}.
\]
In particular, using commutation relations between $\psi_1(1)$ and $\wt q_k(\xi)$ we get
\[
[\frakh, -\wt q_m(\xi)] = -m \wt q_m(\xi) - [y, \wt q_m(\xi)] \partial_y,
\]
and taking the constant terms
\begin{equation}\label{eq:h-psired-comm}
    [\frakh, \wt q_m(\xi)_\red] = -m \wt q_m(\xi)_\red.    
\end{equation}
As a consequence, the following holds.
\begin{prop}
    The operator $\Ad_{\frakh}$ restricted to $\wt F_0\cap W_{\red}$ is diagonalizable and its eigenvalues are non-positive integers.\qed
\end{prop}

Consider
\[
\frakd_\red = \frac{\wt D_{0,2}(1)_\red}2 = \frac{\psi_2(1)}{2} - y \psi_1(1).
\]
As $\psi_1(1)\in H^0(\CM_\alpha)$ is a scalar, it commutes with all operators. Since $\frakd_\red$ commutes with both $y$ and $\partial_y$ we have
\[
[\frakh, \frakd_\red] =  -\left[\wt D_{1,1}(1), \frac{\psi_2(1)}{2} - y \psi_1(1)\right] =  \psi_2(1) - 2 y \psi_1(1) - r^{-2} \psi_0(\eta^2)\psi_1(1)^2 = 2\frakd_\red \mod{\wt F_0}.
\]
Write
\[
[\frakh, \frakd_\red] = 2\frakd_\red + \sum_{i\geq 0} f_i,
\]
where each $f_i\in \wt F_0\cap \Wred$ is an eigenvector of $\Ad_{\frakh}$ of eigenvalue $(-i)$, and set
\begin{equation}\label{eq:frake}
    \frake = \frakd_\red + \sum_{i}\frac{f_i}{i+2}.
\end{equation}
Then we have
\[
[\frakh, \frake] = 2\frakd_\red + \sum_{i\geq 0} \left(1-\frac{i}{i+2}\right)f_i = 2 \frake.
\]
Finally, set $\frakf = -\frac{\wt q_2(1)_\red}{2}$. From Proposition~\ref{prop:relations rational}, we have
\[
[\frake, \wt q_2(1)] = [\frakd_\red, \wt q_2(1)] = 2 \wt D_{1,1}(1) - 2 r^{-1} \wt D_{0,1}(\eta) \wt D_{1,0}(1) + 2 r^{-2}\psi_0(\eta)^2 \wt D_{1,0}(1) \psi_1(1).
\]
Taking the constant terms we obtain $[\frake, \wt q_2(1)_\red] = -2 \frakh$, and hence we have
\[
[\frake, \frakf] = \frakh.
\]

We summarize the above discussion:
\begin{thm}\label{thm:sl2-general}
\begin{enumerate}
    \item The operators $(\frake,\frakh, \frakf)$ form a $\liesl_2$-triple in $\End(H^*_\taut(M_\alpha))$, for any $\alpha \in \alpha_0+\BZ \delta$.
    \item Define a filtration $P_\bullet$ of $H^*_\taut(M_\alpha)$ by setting $P_i$ to be the sum of the $\frakh$-eigenspaces with eigenvalues $\leq i$. Then:
    \begin{enumerate}
        \item the tautological classes satisfy $\psi_k(\xi) P_i\subset P_{i+k}$,
        \item the Hecke operators satisfy $\wt q_{k}(\xi)_\red P_i\subset P_{i-k}$,
        \item on the associated graded, the operator $\psi_2(1)$ coincides with $2\frake$ and therefore satisfies the Hard Lefschetz property, that is $\psi_2(1)^k:P_{-k}/P_{-k-1} \to P_{k}/P_{k-1}$ is an isomorphism.
    \end{enumerate}
        \item For any sequence of open subspaces $\CM_{\alpha_0+\BZ\delta}'\subset \CM_{\alpha_0+\BZ\delta}$ preserved by the Hecke correspondences the restriction maps commute with the $\liesl_2$-action and are therefore strictly compatible with the filtrations $P'_\bullet, P_\bullet$.
    \end{enumerate}
\end{thm}
\begin{proof}
    All statements save for (ii.a-b) follow from our construction.
    The statement (ii.b) follows from the commutation relation~\eqref{eq:h-psired-comm}.
    Finally, for (ii.a) we use Proposition~\ref{prop:relations rational}.
    Since the relation $[D_{1,1}(1),\psi_k(\xi)]$ holds on the nose in $W_S$, the corresponding relation in $\wt W_{\mathcal{M}}$ holds on the nose as well:
    \[
        [\wt D_{1,1}(1), \psi_k(\xi)] = -k\psi_k(\xi) + r^{-1}k\psi_{k-1}(\xi\eta)\psi_1(1).
    \]
    Let $f$ be an eigenvector of $\frakh$ with eigenvalue $i$. We have
    \begin{align*}
        \frakh(\psi_k(\xi) f) &= \psi_k(\xi)\frakh f + [\frakh,\psi_k(\xi)]f
        = i(\psi_k(\xi) f) - [\wt D_{1,1}(1),\psi_k(\xi)]f\\
        &= (i+k)\psi_k(\xi) f - (r^{-1}k\psi_1(1))\psi_{k-1}(\xi\eta)f.
    \end{align*}
    Reasoning by induction on $k$, the second term belongs to $P_{i+k-1}$, and so we conclude that $\psi_k(\xi) f\in P_{i+k}$.
\end{proof}
For any $\liesl_2$-triple acting on $H^*(M_\alpha)$ we will refer to the filtration $P_\bullet$ defined in (ii) as the \textit{$\frakh$-degree filtration}.

\section{Higgs bundles}\label{sec:Higgs-bun}
From now on and until the end of the paper, we restrict to the following situation. Let $C$ be a smooth projective curve of genus $g\geq 0$, and let $D$ be a (possibly zero) effective simple divisor. We denote by $S=\mathbb{P}(\Omega(D) \oplus \mathcal{O})$ the projective completion of the line bundle $\Omega(D)$ over $C$, and let $\pi_C:S\to C$ be the projection. We also let $S^\circ \subset S$ be the total space of $\Omega(D)$. Recall that a \textit{$D$-twisted Higgs bundle} on $C$ consists of a pair $(\CE, \theta)$ where $\CE$ is a vector bundle on $C$ and $\theta:\CE\to\CE \otimes \Omega(D)$. By the BNR correspondence one may equivalently view $(\CE,\theta)$ as a purely one-dimensional coherent sheaf $\CF$ on $S$ whose support is disjoint from the boundary divisor $\partial S=S \backslash S^\circ$; the equivalence is given by $\CE=\pi_*(\CF)$. The support of $\CF$ is called the \textit{spectral curve} of $(\CE,\theta)$.

Let us fix a basis 
\[
\Pi=\{1, \gamma_1,\ldots,\gamma_{2g}, \omega\},
\]
of the cohomology ring $H=H^*(C)$ in which the product is given by
\[
1\cdot x = x \cdot 1 = x\quad(x\in\Pi),\qquad \gamma_i \gamma_{i+g} = -\gamma_{i+g} \gamma_i = \omega\quad(1\leq i\leq g),
\]
and all other products of generators vanish.
Note that we have canonical isomorphisms 
$H^*_\pure(S^\circ) \simeq H^*(S^\circ) \simeq H^*(C)=H$. 

\begin{rem}\label{rmk:ci-from-rd}
We have $H^*(S)=H^*(C)[\xi]/\langle \xi^2+(|D|+2g-2)\omega \xi\rangle$, where $\xi=c_1(\CO(\partial S))$. If $\CF$ corresponds to a Higgs bundle $(\CE,\theta)$ of rank $r$ and degree $d$, then
$$c_1(\CF)=r(\xi+(|D|+2g-2)\omega), \qquad c_2(\CF)=(r(r+1)(1-g)-d)\omega\xi.$$
It will be more convenient to work with $(r,d)$, so we will not use these formulas.
\end{rem}

In this section, we specialize the results of Section~\ref{sec:sl2} to twisted Higgs bundles and their parabolic versions.

\subsection{Algebras in the twisted case}
We apply the construction of Section \ref{sec:hecke} to the following moduli space. Fix $r>0$. For each $d\in\BZ$, let $\CM_{r,d,D}^\ellipt$ be the moduli stack of $D$-twisted Higgs bundles of rank $r$ and degree $d$ whose spectral curve is reduced and irreducible. We call it the \textit{elliptic} moduli stack.

It is well-known that $\CM_{r,d,D}^\ellipt$ is of finite type and is a $\BG_m$-gerbe over its coarse moduli space $M_{r,d,D}^\ellipt$, which is smooth; more precisely, it is the relative Jacobian over the family of spectral curves. Note that Hecke modifications and tensoring by line bundles change the degree $d$ but not the support of purely one-dimensional sheaves, hence assumptions \eqref{assumption-a} and \eqref{assumption-b} of Section~\ref{ssec:hecke notations} as well as \eqref{assumption-c}, \eqref{assumption-d} of Section~\ref{sec:sl2} are satisfied. By Theorems \ref{thm:surface W relations} and \ref{thm:W undeformed} we have
\begin{cor}\label{cor:trigonometric}
    For fixed $r$ and $D$, there is an action of the (super) Lie algebra with basis $D_{m,n}(\pi)$, $m,n\geq 0$, $\pi\in\Pi$ and Lie bracket
    \[
    [D_{m,n}(\pi), D_{m',n'}(\pi')] = (m'n-m n') D_{m+m', n+n'-1}(\pi\pi')
    \]
    on the direct sum
    \[
    \bigoplus_{d\in\BZ} H^*_\taut(\CM_{r,d,D}^\ellipt).
    \]
    For $\pi\in H^i$, $D_{0,n}(\pi)$ is the operator of cup product by the tautological class $\psi_n(\pi)\in H^{2n-2+i}(\CM_{r,d,D}^\ellipt)$ and
    \[
    D_{1,n}(\pi) = T_n(\pi): H^{j}(\CM_{r,d-1,D}^\ellipt) \to H^{j+2n-2+i}(\CM_{r,d,D}^\ellipt)
    \]
    is the Hecke operator.
\end{cor}

Next, we make explicit the results of Section~\ref{sec:sl2}. Set $\eta = \omega$ so that $\alpha^{(1)}\cdot\eta = r$, i.e. the number $r$ in Section \ref{sec:sl2} is precisely the rank of Higgs bundles under consideration. The relations of the algebra $\wt W_{\mathcal{M}}$, which are given in Proposition \ref{prop:relations rational}, hold not only modulo $F_{n+n'-3}$, but on the nose by Remark~\ref{rem:undeformed}.

\begin{cor}\label{cor:rational reduced}
For fixed $r$, $d$ and $D$, there is an action of the algebra generated by $\wt D_{m,n}(\pi)$, $m,n\geq 0$, $\pi\in\Pi$ modulo relations
		\[
[\wt D_{m,n}(\pi), \wt D_{m',n'}(\pi')] = (n m'-m n') \wt D_{m+m'-1,n+n'-1}(\pi\pi')
\]
\[
- r^{-1} n m' \wt D_{m,n-1}(\pi\omega) \wt D_{m'-1,n'}(\pi')
+ r^{-1} m n' \wt D_{m',n'-1}(\pi'\omega) \wt D_{m-1,n}(\pi)
\]
on $H^*_\taut(\CM_{r,d,D}^\ellipt)$,
where for any $\pi\in H^i$, $\wt D_{0,n}(\pi) = D_{0,n}(\pi)$ is the operator of cup product $\psi_n(\pi)$.\qed
\end{cor}


We next move on to the reduced version $\Wred$ of $\wt W_{\mathcal{M}}$. Recall that the elements $y = \frac{\psi_1(\omega)}r$, $\partial_y = -\wt D_{1,0}(1)$ generate a Weyl algebra and induce a canonical decomposition
\[
H^*_\taut(\CM_{r,d,D}^\ellipt) = H^*_\taut(\CM_{r,d,D}^\ellipt)_\red[y],
\]
where $H^*_\taut(\CM_{r,d,D}^\ellipt)_\red$ is the subring annihilated by $\partial_y$. From the relation
\[
[\partial_y, \psi_n(\pi)] = [\wt D_{0,n}(\pi), \wt D_{1,0}(1)] = n \wt D_{0,n-1}(\pi) = n \psi_{n-1}(\pi)
\]
it follows that coefficients of the generating series
\[
\sum_{n=0}^\infty \frac{\psi_n(\pi)_\red}{n!} = e^{-y} \sum_{n=0}^\infty \frac{\psi_n(\pi)}{n!}
\]
generate $H^*_\taut(\CM_{r,d,D}^\ellipt)_\red$. Recall the identification
\[
H^*_\taut(\CM_{r,d,D}^\ellipt)_\red = H^*_\taut(M_{r,d,D}^\ellipt)
\]
(see \eqref{eq:identi stack reduced coarse}). The subspace $H^*_\taut(\CM_{r,d,D}^\ellipt)_\red$ is preserved by the operators $\wt D_{m,n}(\pi)_\red$ which commute with both $\partial_y$ and $y$; see Section~\ref{ssec:reduction weyl}.

\begin{prop}\label{prop:rational reduced twice}
    For fixed $r$, $d$ and $D$, the algebra generated by $\wt D_{m,n}(\pi)_\red$, $m,n\geq 0$, $\pi\in\Pi$ modulo relations
\begin{equation}\label{eq:ws red higgs}
\begin{split}
    [\wt D_{m,n}&(\pi)_\red, \wt D_{m',n'}(\pi')_\red] = (n m'-m n') \wt D_{m+m'-1,n+n'-1}(\pi\pi')_\red\\
   & - r^{-1} n m' \left(\wt D_{m,n-1}(\pi\omega)_\red \wt D_{m'-1,n'}(\pi')_\red + \wt D_{m'-1,n'}(\pi'\omega)_\red \wt D_{m,n-1}(\pi)_\red\right)\\
    & + r^{-1} m n' \left(\wt D_{m-1,n}(\pi\omega)_\red \wt D_{m',n'-1} (\pi')_\red + \wt D_{m',n'-1}(\pi'\omega)_\red \wt D_{m-1,n}(\pi)_\red\right).
\end{split}
\end{equation}
acts on $H^*_\taut(M_{r,d,D}^\ellipt)$,
where $\wt D_{0,n}(\pi) = \psi_n(\pi)_\red$ for $\pi\in H^i$ is the operator of cup product by $\psi_n(\pi)_\red\in H^{2n-2+i}(M_{r,d,D}^\ellipt)$.
\end{prop}
\begin{proof}
    Let $f$ be an operator. We have the unique decomposition
    \[
    f = \sum_{i,j} y^i f_{i,j} \partial_y^j,
    \]
    where $f_{i,j}$ commutes with both $y$ and $\partial_y$. Recall that $f_\red = f_{0,0}$. Applying $\Ad_y$, $\Ad_{\partial_y}$ to the above identity we obtain
    \[
    f_{i,j} = \left(\frac{1}{i! j!} \Ad_{\partial_y}^i (-\Ad_y)^j f\right)_\red.
    \]
    For any two operators $f, g$ we write
    \[
    f g = \sum_{i,j} y^i f_{i,j} \partial_y^j \sum_{i',j'} y^{i'} g_{i',j'} \partial_y^{j'}.
    \]
    The terms with $i>0$, $j'>0$ or $i'\neq j$ do not contribute to the constant term. The remaining terms with $i=j'=0$ and $i'=j$ contribute $j! f_{0,j} g_{j,0}$. Thus we have
    \[
    (f g)_\red = \sum_i i! f_{0,i} g_{i,0} = \sum_i \frac{1}{i!} \left((-\Ad_y)^i f\right)_\red (\Ad_{\partial_y}^i g)_\red.
    \]
    Applying this identity to each term in Corollary \ref{cor:rational reduced} and using
    \[
    [\partial_y, \wt D_{m,n}(\pi)] = n \wt D_{m,n-1}(\pi),\qquad [y, \wt D_{m,n}(\pi)] = \frac{m}{r} \wt D_{m-1,n}(\pi\omega)
    \]
    we obtain the required identities.
\end{proof}

We finish this section by relating the algebra $\Wred$ with cumbersome relations of Proposition~\ref{prop:rational reduced twice} to the simpler algebra $\CH_2$ of Hamiltonian vector fields on the plane. Although not directly used in the rest of the paper, it was one of our main motivations to study Hecke operators and their relation with tautological classes. The symmetry between $m$ and $n$ is evident in the relations \eqref{eq:ws red higgs} of $\Wred$. 
Let us introduce a formal variable $x$ and define the ``unreduced'' operators $\wt D_{m,n}(\pi)_\unred$ by
    \[
    \wt D_{m,n}(\pi)_\unred = \sum_{i,j} x^i \binom{m}{i} \binom{n}{j} (-r)^{-j} \wt D_{m-i,n-j}(\pi\omega^j) \partial_x^j.
    \]
    This produces an action of the Lie algebra with bracket
    \[
    [\wt D_{m,n}(\pi)_\unred, \wt D_{m',n'}(\pi')_\unred] = (m'n - m n') \wt D_{m+m'-1,n+n'-1}(\pi\pi')_\unred
    \]
    on the space $H^*_\taut(\CM_{r,d,D}^\ellipt)[x]$. By direct computation, the subalgebra spanned by elements $D_{m,n}(1)$ is isomorphic to the Lie algebra $\CH_2$ of polynomial Hamiltonian vector fields on the plane.

    \begin{cor}\label{cor:rational}
        The Lie algebra $\CH_2$ acts on
        \[
        H^*_\taut(\CM_{r,d,D}^\ellipt)[x] = H^*_\taut(M_{r,d,D}^\ellipt)[x,y]
        \]
        in such a way that $\psi_n(1)$ corresponds to the operator with Hamiltonian $y^n$.\qed
    \end{cor}

\subsection{Algebras in the parabolic case}
Our argument to prove the $P=W$ conjecture treats simultaneously the classical and parabolic cases. Let us explain how to extend the constructions of Sections \ref{sec:hecke}-\ref{sec:sl2} to the parabolic setup.
Let $\CM_{r,d,D}^\parell$ resp. $M_{r,d,D}^\parell$ be the moduli stack resp. coarse moduli space parameterizing $D$-twisted Higgs bundles of rank $r$ and degree $d$ such that the spectral curve is reduced and irreducible, the residue of the Higgs field over each point of $D$ has distinct eigenvalues, together with an ordering of the eigenvalues at each point. Equivalently, one can view this as a purely 1-dimensional sheaf $\CF$ on $S^\circ$ whose support $\supp(\CF)$ is proper, reduced, irreducible, and intersects the fibers $\pi^{-1}(p)$ transversally for every $p \in D$ at $r$ points, together with an ordering of $\pi^{-1}(p) \cap \supp(\CF)$ for every $p\in D$. We call it the \textit{parabolic elliptic stack}. There is an obvious forgetful map
\[
\mathsf{For}:\CM_{r,d,D}^\parell \to \CM_{r,d,D}^\ellipt,
\]
which is an unramified covering of degree $(r!)^{|D|}$ over its image.
As Hecke operators do not change the support, the results of the previous section apply to these moduli spaces by base change; in particular we obtain actions of the respective algebras on $H^*_\taut(\CM_{r,d,D}^\parell)$, $H^*_\taut(M_{r,d,D}^\parell)$, which are the subrings generated by the tautological classes of the form $\psi_k(\pi)$, $\psi_k(\pi)_\red$ respectively, for $\psi \in \Pi$. However, due to the presence of marked points we can introduce additional tautological classes and Hecke operators.

\smallskip

The tautological sheaf $\CF_{r,d,D}^\parell$ is isomorphic to the pullback $\mathsf{For}^*(\CF_{r,d,D})$ of the tautological sheaf on $\CM_{r,d,D}$, so we will omit the superscript for simplicity. 
For each $p\in D$, $i=1,\ldots,r$, denote by $q_{p,i}:\CM_{r,d,D}^\parell \to S^\circ\subset S$ the map sending a parabolic Higgs bundle to the $i$-th point of $\supp(\CF)\cap \pi^{-1}(p)$.
We have a tautological sheaf
\[
    \mathcal{U}^d_{p,i} \coloneqq (\Id\times q_{p,i})_*(\Id\times q_{p,i})^*\CF_{r,d,D}^\parell,
\]
whose restriction to $\{\CF\}\times S$, $\CF\in \CM_{r,d,D}^\parell$ is the skyscraper sheaf $\delta_{q_{p,i}}$.
Note that $(\Id\times q_{p,i})^*\CF_{r,d,D}^\parell$ is a line bundle.
In particular, we have
\[
    \Hom(\CF_{r,d,D}^\parell, \mathcal{U}^d_{p,i}) = \Hom((\Id\times q_{p,i})^*\CF_{r,d,D}^\parell,(\Id\times q_{p,i})^*\CF_{r,d,D}^\parell) = \mathbb{C}.
\]
As a consequence, there is a canonical subsheaf $\CG \subset \CF_{r,d,D}^\parell$ with $\CF_{r,d,D}^\parell/\CG \simeq \mathcal{U}^d_{p,i}$.
This gives rise to an isomorphism of stacks
$\Mod_{p,i}:\CM_{r,d,D}^\parell \to \CM_{r,d-1,D}^\parell$ and a canonical short exact sequence
\begin{equation}\label{eq:taut classes par ell Mod}
0 \xrightarrow{} \Mod_{p,i}^*(\CF_{r,d-1,D}) \xrightarrow{} \CF_{r,d,D} \xrightarrow{} \Hom_S(\CF_{r,d,D},\mathcal{U}^d_{p,i})^\vee\otimes \mathcal{U}^d_{p,i} \xrightarrow{} 0.
\end{equation}
Observe that $\Mod_{p,i}^*(\mathcal{U}^{d-1}_{p,i}) \simeq \mathcal{U}^d_{p,i}$.
We set
\[
X_{p,i} \coloneqq \Mod_{p,i}^*: H^*(\CM_{r,d-1,D}^\parell) \xrightarrow{\sim} H^*(\CM_{r,d,D}^\parell).
\]
Further, set $L^d_{p,i}\coloneqq \Hom_S(\CF_{r,d,D},\mathcal{U}^d_{p,i})^\vee$, which is a line bundle on $\CM_{r,d,D}^{\parell}$. 
In terms of Higgs bundles, the fiber of $L^d_{p,i}$ at a pair $(\CE,\theta)$ is the $i$-th eigenspace of the residue of the Higgs field $\theta$ at $p$.
We set 
$$y_{p,i}=c_1(L^d_{p,i})\in H^2(\CM_{r,d,D}^\parell)$$
and denote by the same symbol the operator of cup product with $c_1(L^d_{p,i})$.

\begin{prop}
    For each $p\in D$, $1\leq i,i'\leq r$, $n\geq 0$, $\pi\in H^{>0}$ we have
    \[
    [X_{p,i}, y_{p',i'}] = 0, \quad [\psi_n(1), X_{p,i}] = n y_{p,i}^{n-1} X_{p,i}, \quad [\psi_n(\pi), X_{p,i}] = 0, \quad [y_{p,i}, T_0(1)] = X_{p,i}.
    \]
\end{prop}
\begin{proof}
The first three relations follow easily from~\eqref{eq:taut classes par ell Mod}. The last relation is less straightforward. Recall that $T_0(1)$ is given by the Hecke correspondence $\CZ$ which parametrizes colength one inclusions $\CF'\subset \CF$ with $\CF \in \CM_{r,d,D}^\parell$, $\CF'\in \CM_{r,d-1,D}^\parell$. Denoting by $\pi_d, \pi_{d-1}$ the maps to $\CM_{r,d,D}^\parell, \CM_{r,d-1,D}^\parell$ we have
$$T_0(1)(c)=\pi_{d*}\left([\CZ]^\mathsf{vir} \cap \pi_{d-1}^*(c)\right).$$
It follows that
$$[y_{p,i},T_0(1)](c)=\pi_{d*}\left( c_1(\pi_{d}^*(L_{p,i}^d)-\pi^*_{d-1}(L_{p,i}^{d-1})) \cap \pi_{d-1}^*(c) \cap [\CZ]^{\mathsf{vir}}\right).$$
By construction, there is a canonical inclusion $\pi_{d-1}^*(L^{d-1}_{p,i}) \to \pi^*_{d}(L_{p,i}^d)$ whose cokernel is supported on the closed substack
$$\CZ_{p,i}\coloneqq \{\CF' \subset \CF\;|\; \supp(\CF/\CF')=\{q_{p,i}\}\} \subset \CZ.$$
Observe that $\pi_d$ restricts to an isomorphism $\CZ_{p,i} \simeq \CM_{r,d,D}^\parell$ under which $\pi_{d-1}$ gets identified with $\Mod_{p,i}$. Put $c_{p,i}=[y_{p,i},T_0(1)](1) \in H^0(\CM_{r,d,D}^\parell)=\BC$. By the projection formula we have $[y_{p,i},T_0(1)]=c_{p,i}X_{p,i}$, hence it only remains to determine the constant $c_{p,i}$. Note that $c_{p,i}$ is independent of $i$ by $\mathfrak{S}_r$-symmetry relabeling the points $\{q_{p,i}\}_i$. To determine this constant we note that $\sum_{i=1}^r y_{p,i}$ is the first Chern class of the rank $r$ bundle 
\begin{equation}\label{eq:sum pi n}
\CF_{r,d,D}/\CF_{r,d,D}(-\omega)=\bigoplus_i L_{p,i}^d
\end{equation}
i.e. $\sum_{i=1}^ry_{p,i}=\psi_1(\omega)$. It follows that
\begin{equation}
     \sum_{i=1}^p c_{p,i}X_{p,i} = [\psi_1(\omega), T_0(1)] = T_0(\omega).
\end{equation}
Evaluating this at $1$, we obtain $\sum_i c_{p,i} = r$ and thus $c_{p,i}=1$ as desired.
\end{proof}

\begin{defn}\label{def:Hrd}
    Let $H_{r,D}$ be the ring extension of $H$ obtained by adding generators $\tau_{p,i}$ of degree $2$ where $p\in D$, $i=1,\ldots,r$ modulo the following relations:
\[
\sum_i \tau_{p,i} = \omega,\qquad \tau_{p,i} H^{>0}=0,\qquad \tau_{p,i} \tau_{q,j} = 0 \qquad (p,q\in D, 1 \leq i,j \leq r).
\]
\end{defn}

Set for each $n\geq 0$
\[
\psi_n(\tau_{p,i}) = y_{p,i}^n, \qquad T_n(\tau_{p,i}) = y_{p,i}^n X_{p,i}.
\]
This assignment is compatible with the relations $\sum_i \tau_{p,i} = \omega$ because by~\eqref{eq:sum pi n}
\[
\sum_{i=1}^r y_{p,i}^n = \psi_n(\omega),
\]
and
\[
\sum_{i=1}^r y_{p,i}^n X_{p,i} = \sum_{i=1}^r \frac{1}{n+1} [y_{p,i}^{n+1}, T_0(1)] = \frac{1}{n+1} [\psi_{n+1}(\omega), T_0(1)] = T_n(\omega).
\]
Now we have, for $\pi\in H$,
\begin{equation}\label{eq:compatibility parab H}
[y_{p,i}^n, T_{n'}(\pi)] = \frac{1}{n'+1} [y_{p,i}^n, [\psi_{n'+1}(\pi), T_0(1)]] = \frac{n}{n'+1} y_{p,i}^{n-1} [\psi_{n'+1}(\pi), X_{p,i}].
\end{equation}
The r.h.s. of \eqref{eq:compatibility parab H} is equal to $n y_{p,i}^{n+n'-1} X_{p,i} = n T_{n+n'-1}(\tau_{p,i})$ for $\pi = 1$ and vanishes for $\pi\in H^{>0}$. In addition, we have
\[
[\psi_n(1), y_{p,i}^{n'} X_{p,i}] = n y_{p,i}^{n+n'-1} X_{p,i} = n T_{n+n'-1}(\tau_{p,i}).
\]
All together, we see that the commutation relations
\[
[\psi_n(\pi), T_{n'}(\pi')] = n T_{n+n'-1}(\pi\pi')
\]
hold for all $\pi,\pi'\in H_{r,D}$. Finally, from the definition of $T_n(\tau_{p,i})$ we have
\[
T_n(\tau_{p,i})(1) = y_{p,i}^n.
\]
This is compatible with the computation carried out in Section~\ref{sec: hecke on one}. Note that the diagonal class, coming from the surface $S^\circ$ equals $\omega\otimes\omega$, and therefore annihilates $\tau_{p,i}$. This implies that for any two symmetric functions $f$, $g$ of positive degree we have $(fg)(\tau_{p,i}) = 0$. Hence
\[
h_{n+1}(\tau_{p,i}) = \frac{p_{n+1}(\tau_{p,i})}{n+1} = \psi_n(\tau_{p,i}) = y_{p,i}^n.
\]

We have shown that the action of $T$ on the polynomial ring generated by $\psi_n(\pi)$, $\pi\in H_{r,D}$ fits into the framework of Section~\ref{sec:fock space}.
In particular, the results of that section apply, such as the commutation relations between operators $T$'s, and Theorems \ref{thm:surface W relations} and \ref{thm:W undeformed}. We summarize this as follows.

\begin{thm}\label{thm:rational parabolic}
Corollaries \ref{cor:trigonometric}, \ref{cor:rational reduced}, \ref{cor:rational} and Proposition \ref{prop:rational reduced twice} extend \textit{verbatim} to the spaces $\CM_{r,d,D}^\parell$ and $M_{r,d,D}^\parell$ when $H$ is replaced by $H_{r,D}$ and the tautological ring is enlarged by adding the classes $y_{p,i}$.
\end{thm}

From now on, we denote by $H^*_\taut(\CM_{r,d,D}^\parell)\subset H^*(\CM_{r,d,D}^\parell)$,  $H^*_\taut(M_{r,d,D}^\parell)\subset H^*(M_{r,d,D}^\parell)$ the subrings generated by $\psi_n(\pi)$, $\pi\in H_{r,D}$.

\subsection{$\liesl_2$-triples}\label{sec: sl 2 triples}
In Section~\ref{ssec:sl2} we explicitly constructed an $\liesl_2$-triple acting on $H^*_\taut(M_\alpha)$ from the algebra $\Wred$. As explained in Remark \ref{rem:undeformed}, the situation simplifies in the case of Higgs bundles on a curve. Let us spell out the form of the $\liesl_2$-triple in our case. As this applies both to the parabolic and the classical cases, we write $\CM$, resp. $M$ for either $\CM_{r,d,D}^\parell$ or $\CM_{r,d}$, resp. $M_{r,d,D}^\parell$ or $M_{r,d}$.

Recall that by construction $\wt D_{1,0}(\eta)=\wt D_{1,0}(\omega)$ vanishes. Hence $\wt D_{1,0}(\omega)_\red=0$. Likewise, we have $y_\red=0$ and $(\partial_y)_\red=0$. The element $D_{0,1}(1) = \psi_1(1)\in H^0(\CM)$ being the Euler characteristic of a vector bundle of rank $r$ and degree $d$, we may fix $D_{0,1}(1)=0$ by choosing $d$ appropriately\footnote{Recall that the operators $\wt D_{m,n}(\xi)_\red$ act on the cohomology $H^*(M_\alpha)$ for each $\alpha \in \alpha_0 + \BZ\delta$, and that these have all been identified; in particular, we may freely choose the degree $d$. By Remark~\ref{rmk:ci-from-rd}, our choice of $d$ is a multiple of $r$.}. Thus,
\[
\wt D_{0,1}(1)_\red = \wt D_{1,0}(1)_\red = \wt D_{0,1}(\omega)_\red = \wt D_{1,0}(\omega)_\red=0
\]
and $\wt D_{0,2}(1)=\psi_2(1)$. This implies that all the extra terms in Proposition \ref{prop:rational reduced twice} vanish for the operators $\wt D_{1,1}(1)_\red$, $\wt D_{0,2}(1)_\red$, $\wt D_{2,0}(1)_\red$, so that we have
\[
[\wt D_{2,0}(1)_\red, \wt D_{m,n}(\pi)_\red] = -2n \wt D_{m+1,n-1}(\pi)_\red,
\]
\[
[\wt D_{1,1}(1)_\red, \wt D_{m,n}(\pi)_\red] = (m-n) \wt D_{m,n}(\pi)_\red,
\]
\[
[\wt D_{0,2}(1)_\red, \wt D_{m,n}(\pi)_\red] = 2m \wt D_{m-1,n+1}(\pi)_\red.
\]

\begin{prop}\label{prop:sl2}
    The operators $\frake = \frac12 \wt D_{0,2}(1)_\red$, $\frakh = -\wt D_{1,1}(1)_\red$, $\frakf = -\frac12 \wt D_{2,0}(1)_\red$ form an $\liesl_2$-triple acting on $H_\taut^*(M)$. \qed
\end{prop}
We will need a slightly more general statement. The $\liesl_2$-triple constructed above will be called the \emph{original} $\liesl_2$-triple. The space $H_\taut^2(M)$ is spanned by $\psi_2(1)$, $\psi_1(\pi)$ for $\pi\in H^2_{r,D}$ and the products $\psi_1(\gamma_i)\psi_1(\gamma_j)$ for $1 \leq i,j \leq 2g$ (recall that $\gamma_i$'s form a basis of $H^1$).
\begin{prop}\label{prop:many sl_2}
    Fix $\alpha\in H^2_\taut(M)$ and write
    \begin{equation}\label{eq:proof variation sl 2}
    \alpha = A\left(\psi_2(1) + \frac{1}r \sum_{i=1}^g \psi_1(\gamma_i)\psi_1(\gamma_{i+g})\right) + \sum_{i<j} B_{i,j} \psi_1(\gamma_i)\psi_1(\gamma_j) + \sum_{p,i} C_{p,i} \psi_1(\tau_{p,i}),
    \end{equation}
    for some $A,B_{i,j},C_{p,i}\in\BC$, where the third term only occurs in the parabolic case. If $A\neq 0$ and the antisymmetric matrix with off-diagonal entries $B_{i,j}$ is non-degenerate, then there exists an $\liesl_2$-triple $(\frake',\frakh',\frakf')$ acting on $H_\taut^*(M)$ with $\frake'=\alpha$, whose associated $\frakh'$-degree filtration coincides with the $\frakh$-degree filtration of the original $\liesl_2$-triple.
\end{prop}
\begin{proof}
Consider the operators $y(\pi) = \psi_1(\pi)$ and $x(\pi) = \wt D_{1,0}(\pi)_\red$ for $\pi\in H^{>0}_{r,D}$. By Theorem~\ref{thm:sl2-general}, the operators $y(\pi)$ increase the $\frakh$-degree by $1$, and the operators $x(\pi)$ decrease it by $1$. In particular, they are both nilpotent as $H^*(M)$ is finite-dimensional. Proposition~\ref{prop:rational reduced twice} yields
\begin{equation}\label{eq:y-x-comm}
    [y(\pi), x(\pi')] = \psi_0(\pi \pi').
\end{equation}
Note that $\psi_0(\pi \pi')=r \pi\cdot \pi'$ if both $\pi,\pi'$ are in $H^1$ (and the intersection product is taken in $H^*(C)$) while $\psi_0(\pi \pi')=0$ if one of $\pi,\pi'$ belongs to $H_{r,D}^{>1}$. In addition we have
    \[
    [\frake, x(\pi)] = y(\pi).
    \]
 For $\pi\in H^2_{r,D}$ let $X(\pi)=e^{x(\pi)}$.
 By the equation above it satisfies $[\frake, X(\pi)] = y(\pi) X(\pi)$, and so
    \[
    X(\pi)^{-1} \frake X(\pi) = \frake + y(\pi).
    \]
Starting from the original $\liesl_2$ triple and conjugating by a product of complex powers of $X(\tau_{p,i})$ (as $x(\tau_{p,i})$ is nilpotent, the complex powers of $X(\tau_{p,i})$ are well-defined) we can obtain a new $\liesl_2$-triple $(\frake',\frakh',\frakf')$ in which $\frake'=\frake + \sum_{p,i} C_{p,i} \psi_1(\tau_{p,i})$ for any given complex values of the coefficients $C_{p,i}$. Moreover, since $x(\pi)$ has negative $\frakh$-degree, the operators $X(\tau_{p,i})$ preserve the $\frakh$-degree filtration, i.e. the $\frakh$-degree and $\frakh'$-degree filtrations coincide.

In order to obtain the general form, we again argue in a similar fashion, this time using operators of the form $x(\gamma) y(\gamma')$ for $\gamma, \gamma'\in H^1$. Notice that these operators preserve the $\frakh$-degree and hence the $\frakh$-degree (and $\frakh'$-degree) filtration. A direct computation yields
    \[
    [\frake, x(\gamma) y(\gamma')] = y(\gamma) y(\gamma'), \qquad\left[\sum_i y(\gamma_i) y(\gamma_{i+g}), x(\gamma) y(\gamma')\right] = - r y(\gamma) y(\gamma').
    \]
As $[x(\gamma)y(\gamma'),y(\tau_{p,i})]=0$ for any $\gamma,\gamma',p,i$, the element $\frake'+\frac{1}{r} \sum_i y(\gamma_i) y(\gamma_{i+g})$ commutes with any element of the form $x(\gamma) y(\gamma')$. Equation~\eqref{eq:y-x-comm} implies that the elements of the form $x(\gamma) y(\gamma')$ generate a Lie algebra isomorphic to $\liegl_{2g}$. The adjoint action by this Lie algebra can be integrated to an action of $\GL_{2g}(\BC)$ on $H^2_\taut(M)$ which fixes elements of the form $\frake'+\frac{1}{r} \sum_i y(\gamma_i)y(\gamma_{i+g})$ and transforms elements of the form $y(\gamma) y(\gamma')$ according to the exterior square of the standard representation. Since any two non-degenerate antisymmetric forms are $\GL_{2g}(\BC)$-equivalent, the orbit of $-\sum_{i\leq g}y(\gamma_i)y(\gamma_{i+g})$ contains all the nondegenerate elements. Writing $\frake''=(\frake'+\frac{1}{r} \sum_i y(\gamma_i) y(\gamma_{i+g}))-\frac{1}{r} \sum_i y(\gamma_i) y(\gamma_{i+g})$ we see that we can transform the $\liesl_2$-triple $(\frake',\frakh',\frakf')$ into another $\liesl_2$-triple of the desired form. We conclude by noting that the $\GL_{2g}(\BC)$-action preserves the $\frakh$-degree.
\end{proof}

\begin{rem}
    It is clear from the proof that the symplectic group $\Sp_{2g}(\BC)\subset\GL_{2g}(\BC)$ acts on $H_\taut^*(M)$ respecting the $\liesl_2$-action, preserving the cohomological degree, and acting via the standard representation on the span of classes $\psi_1(\gamma_i)$.
\end{rem}

\begin{cor}\label{cor:generic sl_2}
    For any $\alpha\in H^2_\taut(M)$ and for all but finitely many values $\lambda\in\BC$ there exists an $\liesl_2$-triple $(\frake,\frakh,\frakf)$ with $\frake=\alpha+\lambda \psi_2(1)$ for which the $\frakh$-degree filtration coincides with the $\frakh$-degree filtration of the original $\liesl_2$-triple.\qed
\end{cor}

\section{Proof of the P=W conjectures}\label{sec:PW}
Throughout this section, we fix a smooth complex projective curve $C$ of genus $g>1$ and a positive integer $r$. We will consider Higgs bundles on $C$ of rank $r$ and degree $d$; the latter will be allowed to vary. We also fix a (nontrivial) simple effective divisor $D$ which we specialize in Section~\ref{sec:classical case} to a divisor of degree one.

\subsection{Strategy of the proof}\label{sec:PW1} Let us briefly explain the steps of our proof of the $P=W$ conjectures, in both classical and parabolic settings. First of all, thanks to a classical result of Markman (\cite{markman2002generators}, see \cite{parabolicmarkman} for the parabolic case), the cohomology of the moduli spaces of stable Higgs bundles is generated by the tautological classes. The $P=W$ conjectures then reduce, by work of Shende \cite{shende2016weights} (see \cite{mellit2019cell} for the parabolic case) to the $P=C$ conjecture, see Introduction for the statement. 
Our proof of the $P=C$ conjecture proceeds in four steps, each one establishing a version of $P=C$ in a different setup. 

Recall that a $D$-parabolic Higgs bundle\footnote{this is sometimes referred to as a \textit{twisted} parabolic Higgs bundle.} is a triple $(\CE,\theta,(F_p)_p)$ where $\CE$ is a vector bundle on $C$, $\theta: \CE \to \CE \otimes \Omega(D)$ and for each $p \in D$, $F_p: F_{p,1} \subset F_{p,2} \subset \cdots \subset F_{p,r}=\CE_{|p}$ is a full flag in the fiber of $\CE$ at $p$ which is preserved by $\text{res}_p(\theta)$. A $\textit{nilpotent}$ $D$-parabolic Higgs bundle\footnote{this is sometimes simply called a parabolic or a strongly parabolic Higgs bundle.} is a triple $(\CE,\theta,(F_p)_p)$ as above, for which $\text{res}_p(F_{p,i}) \subseteq F_{p,i-1}$ for all $i$. Finally, note that a parabolic Higgs bundle $(\CE, \theta, (F_p)_p)$ with $\text{res}_p=0$ for all $p$ is simply a Higgs bundle $(\CE,\theta)$ together with a collection of full flags in each of the fibers $\CE_{|p}$. 

We proceed as follows:
\begin{enumerate}
\item Identify the $\frakh$-degree filtration on $H^*(M^\parell_{r,d,D})$ from Section~\ref{sec: sl 2 triples} with the perverse filtration $P$, and deduce $P=C$ for the parabolic elliptic moduli $M^\parell_{r,d,D}$ from commutation relations in $\Wred$ (Section~\ref{ssec:elliptic});
\item Check that the restriction map $H^*(M^\parab_{r,d,D})\to H^*(M^\parell_{r,d,D})$ is injective and compatible with $P$, deducing $P=C$ for stable parabolic Higgs bundles (Section~\ref{ssec:twisted parabolic});
\item Check that the restriction map $H^*(M^\parab_{r,d,D})\to H^*(M^{\parab,0}_{r,d,D})$ to the nilpotent parabolic Higgs bundles is an isomorphism compatible with $P$, hence $P=C$ for $M^{\parab,0}_{r,d,D}$ (Section~\ref{ssec:nil-par});
\item Finally, for the classical moduli of Higgs bundles $M_{r,d}$, $\gcd(r,d)=1$ we realize $H^*(M_{r,d})$ as a direct summand of $H^*(M^{\parab,0}_{r,d,D})$, deducing $P=C$ by another compatibility check (Section~\ref{sec:classical case}).
\end{enumerate}

\begin{rem}
    Note that we only use the $W_S$-action and its consequences in the first step. The reason is that our construction \textit{a priori} does not give an action of $W_S$ on $H^*(M^\parab_{r,d,D})$ and other moduli spaces.
\end{rem}

\subsection{Elliptic case}\label{ssec:elliptic}
In an effort to unburden the notation, we will suppress the subscript $D$. Fix a generic stability parameter and let $M^\parab_{r,d}$ be the moduli space of stable $D$-parabolic Higgs bundles. Let $\chi^\parab: M^\parab_{r,d} \to \BA^\parab$ be the Hitchin map; here $\BA^\parab$ is an affine space which only depends on $r, D$ and $C$.
There is a cartesian square
$$
\begin{tikzcd}
    M_{r,d}^\parell \arrow{r}{i} \arrow[swap]{d}{\chi^\parell} & M_{r,d}^\parab \arrow{d}{\chi^\parab}\\
    \BA^\parell \arrow{r}{i'} & \BA^\parab
\end{tikzcd}
$$
where $i':\BA^\parell \to \BA^\parab$ and $i$ are open immersions ($\BA^\parell$ is the parabolic elliptic locus). The map $\chi^{\parell}$ is projective and hence yields a perverse filtration $P_\bullet$ on $H^*(M_{r,d}^\parell)$.

By \cite[Thm. 1.2, Cor. 1.3]{parabolicmarkman}, we have the following parabolic version of Markman's theorem:
\begin{equation}\label{eq:pure = tauto}
H^*_\taut(M_{r,d}^\parab) = H^*_\pure(M^\parab_{r,d}),
\end{equation}
from which we deduce the same result for $M_{r,d}^\parell$ by restricting along $i$.

As Hecke correspondences do not change the support of purely $1$-dimensional sheaves, the convolution diagram \eqref{diag:Hecke corr} is defined over $\BA^\parab$. Applying Propositions \ref{prop:perverse cup product} and \ref{prop:functoriality} we deduce that for $\xi\in H^i(S)$ and $m\geq 0$
\begin{itemize}
    \item $\psi_m(\xi) P_j \subseteq P_{j+2m-2+i}$;
    \item $T_m(\xi) P_j \subseteq P_{j+2m-2+i}$;
    \item More generally, the operator $D_{m,n}(\xi)$  (of vertical degree $2n-2+i$) satisfies  $D_{m,n}(\xi) P_j \subseteq P_{j+2n-2+i}$.
\end{itemize}
In particular, the operator $q_1(\omega)$, which was used to identify the cohomologies of $M^\parell_{r,d}$ for different values of $d$ respects the filtration, i.e. $q_1(\omega) P_iH^*(M^\parell_{r,d})\subseteq P_iH^*(M^\parell_{r,d+1})$. Tensoring by a line bundle of degree one yields an isomorphism $M^\parell_{r,d} \simeq M^\parell_{r,d+r}$ which is compatible with the Hitchin map; it follows that $H^*(M^\parell_{r,d})$ and $H^*(M^\parell_{r,d+r})$ have the same perverse filtrations; applying $q_1(\omega)^r$, we easily deduce that $q_1(\omega)P_iH^*(M^\parell_{r,d})= P_iH^*(M^\parell_{r,d+1})$ for any $d$ (hence the same holds for $q_1(\omega)^{-1}$).
Since the operators $\wt D_{m,n}(\xi)_\red$ have the same vertical degree as $D_{m,n}(\xi)$, we obtain the following:

\begin{prop}\label{prop:compatibility sl 2 perverse}
    The operators $\frake, \frakh, \frakf$ of the original $\liesl_2$-triple (see Proposition~\ref{prop:sl2}, as well as any of the triples constructed in Proposition~\ref{prop:many sl_2} change perversity by $2,0,-2$ respectively.
\end{prop}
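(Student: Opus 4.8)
The plan is to reduce the entire statement to a single principle already recorded in the three displayed containments above: every operator $D_{m,n}(\xi)$ (in particular each $\psi_m(\xi)$ and each Hecke operator $T_m(\xi)$) shifts the perverse filtration by exactly its cohomological degree $2n-2+|\xi|$, where $|\xi|$ is the degree of $\xi\in H$. Granting this, it remains only to check that the two operations used to assemble the $\liesl_2$-triples, namely the passage $D_{m,n}(\xi)\rightsquigarrow\wt D_{m,n}(\xi)$ and the reduction $f\mapsto f_\red$, preserve this shift-equals-degree property, and then to read off cohomological degrees.

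First I would propagate the property to the operators $\wt D_{m,n}(\xi)$. Recall $D_{m,n}(\xi)=u^m\sum_i\tfrac{m^i}{i!}\wt D_{i,n}(\xi)$ with $u=X e^{\theta/r}$ and $X=q_1(\omega)/r=D_{1,0}(\omega)/r$. Since $\omega\in H^2$, the operator $D_{1,0}(\omega)$ has cohomological degree $0$; moreover $q_1(\omega)^{\pm1}P_i=P_i$ as already noted, and the linear coefficient $\theta$ of the degree-$0$ series $X^{-k}q_k(\omega)$ again has degree $0$, so $u$ and $u^{-1}$ preserve $P_\bullet$ exactly. Multiplying the displayed identity by $u^{-m}$ and extracting the coefficient of $m^i$ (a finite $\BQ$-linear combination of the values $u^{-m}D_{m,n}(\xi)$) shows that each $\wt D_{i,n}(\xi)$ shifts perversity by its cohomological degree $2n-2+|\xi|$. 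For the reduction, note that $y=\psi_1(\omega)/r$ and $\partial_y=-\wt D_{1,0}(1)$ have cohomological degrees $+2$ and $-2$, so $\Ad_y$ and $\Ad_{\partial_y}$ raise and lower the perversity shift by $2$; inserting this into the explicit expansion $f_\red=\sum_{i,j}\tfrac{1}{i!j!}y^i(\Ad_y^j(-\Ad_{\partial_y})^i f)\partial_y^j$, every summand contributes the same net shift as $f$, so $\wt D_{m,n}(\xi)_\red$ shifts perversity by $2n-2+|\xi|$ too.

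With this in hand the original triple is immediate: $\frake=\tfrac12\wt D_{0,2}(1)_\red$, $\frakh=-\wt D_{1,1}(1)$ and $\frakf=\tfrac12\wt D_{2,0}(1)$ have cohomological degrees $2\cdot2-2$, $2\cdot1-2$ and $2\cdot0-2$, i.e. $2$, $0$ and $-2$, hence shift perversity by $2$, $0$ and $-2$. For a triple produced by Proposition \ref{prop:many sl_2}, the member $\frake=\alpha$ lies in $H^2_\taut(M)$, so Proposition \ref{prop:perverse cup product} gives $\alpha P_j\subset P_{j+2}$ directly. The other two members are the conjugates of the original $\frakh,\frakf$ by the operators used in that proof: the exponentials $X(p_i)=e^{x(p_i)}$ with $x(p_i)=\wt D_{1,0}(p_i)_\red$ (cohomological degree $2\cdot0-2+2=0$ as $p_i\in H^2_{r,D}$) and the $\GL_{2g}(\BC)$-elements generated by $x(\gamma)y(\gamma')$ with $\gamma,\gamma'\in H^1$ (degree $(2\cdot0-2+1)+(2\cdot1-2+1)=0$). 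Each generator shifts perversity by $0$ and is invertible with inverse doing the same, so the resulting group elements $g$ satisfy $gP_j=P_j$; conjugation by them therefore leaves the perversity shifts of $\frakh$ and $\frakf$ unchanged, again $0$ and $-2$.

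The hard part is entirely in the second paragraph. The geometry, correspondences respecting the Hitchin map between equidimensional moduli spaces, only delivers the shift-equals-degree property for $D_{m,n}(\xi)$, $\psi_m(\xi)$ and the Hecke operators, so the real work is propagating it through the purely algebraic steps: showing that $u,u^{-1}$ act as filtered isomorphisms and that the reduction projector is filtration-preserving. The one point needing genuine care is the \emph{exact} invariance $q_1(\omega)^{\pm1}P_i=P_i$ rather than a mere inclusion, since this is what upgrades ``preserves perversity'' to ``invertibly preserves perversity'' and thereby licenses the conjugation argument for the triples of Proposition \ref{prop:many sl_2}.
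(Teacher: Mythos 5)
Your proposal is correct and is essentially the paper's own argument: the paper derives the proposition directly from the three displayed containments ($D_{m,n}(\xi)$ shifts perversity by its cohomological degree) together with the exact invariance $q_1(\omega)^{\pm1}P_i=P_i$, leaving the propagation through $u^{\pm m}$, the extraction of $\wt D_{m,n}(\xi)$ as polynomial coefficients, the reduction, and the conjugations of Proposition \ref{prop:many sl_2} as implicit bookkeeping, which is exactly what you spell out. You also correctly isolate the one genuinely non-formal input, namely that $q_1(\omega)$ preserves the filtration \emph{exactly} via the isomorphism $M_{r,d}\cong M_{r,d+1}$, which is what the paper records immediately before the proposition.
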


This completely pins down the perverse filtration:
\begin{prop}\label{prop:P and sl2}
    The perverse filtration on $H^*_\pure(M^\parell_{r,d})$ coincides with the $\frakh$-degree filtration induced by the original $\liesl_2$-triple.
\end{prop}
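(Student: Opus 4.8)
The plan is to identify both filtrations with the canonical weight filtration of one and the same nilpotent operator, and then to invoke the uniqueness of that weight filtration. The guiding principle is that if a filtration $P_\bullet$ together with a nilpotent operator $\beta$ forms a Lefschetz structure, then $P_\bullet$ is forced to be the canonical weight filtration of $\beta$ defined above, whose formula depends only on $\beta$. Hence it is enough to exhibit a single operator $\beta$ for which both the perverse filtration $P^{\mathrm{perv}}_\bullet$ and the filtration $P_\bullet$ coming from the original $\liesl_2$-triple of Proposition \ref{prop:sl2} are Lefschetz structures.

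To produce $\beta$, I would start from a relatively ample class for the Hitchin map. In the setup of Section \ref{ssec:elliptic} the perverse filtration on $H^*_\pure(M_{r,d})$ is cut out, via restriction from a smooth projective model over $\ol A_r$, by cup product with a relatively ample class, and any such class restricts into $H^2_\pure(M_{r,d}) = H^2_\taut(M_{r,d})$. Fix one and call its restriction $\alpha \in H^2_\taut(M_{r,d})$. By Corollary \ref{cor:generic sl_2}, for all but finitely many $\lambda\in\BC$ there is an $\liesl_2$-triple whose raising operator is cup product by $\alpha + \lambda\,\psi_2(1)$ and whose $\frakh$-degree filtration is exactly $P_\bullet$. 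Since the tautological classes are pure and the relatively ample cone is open, for small $\lambda$ the class $\alpha + \lambda\,\psi_2(1)$ is still relatively ample; choosing such a $\lambda$ outside the finite exceptional set, we obtain $\beta = \alpha + \lambda\,\psi_2(1)$ which is simultaneously relatively ample and equal to the raising operator $\frake$ of an $\liesl_2$-triple with $\frakh$-filtration $P_\bullet$.

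With such a $\beta$ the conclusion is immediate from two applications of the Lefschetz structure formalism. On the one hand, $\beta$ is relatively ample, so by Theorem \ref{thm:decomposition} the pair $(P^{\mathrm{perv}}_\bullet, \beta)$ is a Lefschetz structure on $H^*_\pure(M_{r,d})$, and therefore $P^{\mathrm{perv}}_\bullet$ is the canonical weight filtration of $\beta$. On the other hand, $\beta = \frake$ is the raising operator of an $\liesl_2$-triple with $\frakh$-filtration $P_\bullet$; representation theory of $\liesl_2$ then gives $\beta\, P_i \subset P_{i+2}$ and isomorphisms $\beta^k\colon P_{-k}/P_{-k-1} \to P_k/P_{k-1}$, so $(P_\bullet, \beta)$ is likewise a Lefschetz structure and $P_\bullet$ is also the canonical weight filtration of $\beta$. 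Uniqueness of the weight filtration then yields $P^{\mathrm{perv}}_\bullet = P_\bullet$.

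The main obstacle is the construction of $\beta$ in the second step: one must check that a relatively ample class can be taken inside the tautological ring and perturbed by $\psi_2(1)$ while both remaining relatively ample and avoiding the finitely many values of $\lambda$ excluded by Corollary \ref{cor:generic sl_2}, and one must make sure that the resulting $\frake$ is genuinely cup product by the class $\beta$ rather than agreeing with it only on an associated graded. The latter is what makes the comparison with $P^{\mathrm{perv}}_\bullet$ legitimate: it holds because we are in the undeformed situation (Remark \ref{rem:undeformed}) with $\psi_1(1)=0$, so the original $\frake$ equals $\tfrac12\psi_2(1)$ on the nose, while the symmetries of Proposition \ref{prop:many sl_2} modify $\frake$ only by adding cup-product operators $\psi_1(\pi)$; thus every $\frake$ arising here is cup product by an honest degree-two class.
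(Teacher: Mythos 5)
Your argument is correct and follows essentially the same route as the paper: both proofs hinge on using Corollary \ref{cor:generic sl_2} to produce a single class $\beta=\alpha+\lambda\psi_2(1)$ that is simultaneously relatively ample and the raising operator $\frake$ of an $\liesl_2$-triple whose $\frakh$-filtration is the original one, and your closing observation that $\frake$ is an honest cup-product operator (because $\psi_1(1)=0$ and the symmetries of Proposition \ref{prop:many sl_2} only add cup products) is exactly the point that makes the comparison legitimate. The only difference is the last step: where you invoke uniqueness of the monodromy weight filtration of the nilpotent operator $\beta$, the paper instead passes to the associated graded of the perverse filtration and shows the two $\frakh$-operators of the resulting pair of $\liesl_2$-actions with common $\frake$ agree via a trace argument---two standard packagings of the same uniqueness principle.
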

\begin{proof}
Choose a relatively ample class $\alpha\in H^2(M^\parell_{r,d})$ and replace it by some linear combination $\alpha'=\alpha + \lambda \psi_2(1)$ with $\lambda\in\BQ$ which is relatively ample and for which the operator $\frake$ of cup product with $\alpha'$ is a part of an $\liesl_2$-triple $(\frake,\frakh',\frakf')$ (see Corollary \ref{cor:generic sl_2}). By Proposition~\ref{prop:compatibility sl 2 perverse}, $(\frake,\frakh',\frakf')$ descends to an $\liesl_2$-triple of operators on $\mathrm{Gr}^P_\bullet H^*(M^\parell_{r,d})$, for which $\frakh'$ is of degree zero. By Theorem~\ref{thm:decomposition}, one can complete the operator $\frake$ to an $\liesl_2$-triple $(\frake,\frakh,\frakf)$ acting on $\mathrm{Gr}^P_\bullet H^*(M^\parell_{r,d})$ for which $\frakh_{|\mathrm{Gr}^P_i}=i\Id_{\mathrm{Gr}^P_i}$. In particular, $[\frakh,\frakh']=0$. The elementary lemma below allows us to conclude that $\frakh=\frakh'$, from which Proposition~\ref{prop:P and sl2} readily follows. 
\end{proof}
\begin{lem} Let $V$ be a finite-dimensional vector space and let $(\frake,\frakh,\frakf), (\frake,\frakh',\frakf')$ be two $\liesl_2$-triples in $\End(V)$. Assume that $[\frakh,\frakh']=0$. Then $\frakh=\frakh'$.
\end{lem}
\begin{proof} By the Jacobson-Morozov theorem, there exists $g \in \text{Stab}_{GL(V)}(\frake)$ such that $\Ad_g\frakh=\frakh'$. Write $V=\bigoplus_i V[i]$ for the $\frakh$-weight decomposition and $V^\frakf=\bigoplus_i V^\frakf[i]$ for the space of lowest weight vectors. Then $g$ is determined by its action on each $V^\frakf[i]$ and we have
$$g(V^\frakf[i]) \subseteq (U(\frake) \cdot V^\frakf[>i]) \oplus V^\frakf[i].$$
This reflects the decomposition $\text{Stab}_{GL(V)}(\frake) \simeq \prod_i GL(V^\frakf[i]) \times \text{Rad}(\text{Stab}_{GL(V)}(\frake))$. As $[\frakh,\frakh']=0$, $\frakh'$ preserves each $V[i]$. But this is only possible if $g \in \prod_i GL(V^\frakf[i])$, which implies that 
$g$ commutes with $\frakh$ and thus $\frakh'=\frakh$.
\end{proof}

The following statement is the version of $P=C$ for the parabolic elliptic locus:
\begin{thm}\label{prop:PW elliptic}
    The subspace $P_m H^*_\pure(M_{r,d}^\parell)$ is the span of products $\prod_i \psi_{m_i}(\xi_i)$ satisfying $\sum_i m_i\leq m+N$, where $N=(g-1)r^2+ |D|\binom{r}{2} +1$.
\end{thm}
\begin{proof}
Observe that $1 \in H^0_\pure(M_{r,d}^\parell)$ is of perverse degree equal to the dimension of the Hitchin map, which is equal to $(g-1)r^2+|D|\binom{r}{2} +1$. The statement is now an obvious consequence of the relation $[\frakh, \psi_m(\xi)] = m \psi_m(\xi)$.
\end{proof}

\subsection{Parabolic case}\label{ssec:twisted parabolic}
Next, we consider the entire space $M^\parab_{r,d}$. As the stability parameter considered is primitive, there are no strictly semistables and the Hitchin map $\chi^\parab$ is projective. The group $\BC^*$ acts on $M^\parab_{r,d}$ by scaling the Higgs field, turning it into a semi-projective variety, see \cite{hausel2015cohomology}. The cohomology of $M^\parab_{r,d}$ is pure by~\cite[Corollary~1.3.2]{hausel2015cohomology}, hence~\eqref{eq:pure = tauto} yields
\[
H^*_\taut(M^\parab_{r,d}) = H^*(M^\parab_{r,d}).
\]

There is a decomposition $\BA^\parab = \BA_D \times \BA^{\parab,0}$, and correspondingly $\chi^\parab = \chi_D \times \chi'$, where $\BA^{\parab,0}$ is the Hitchin base for the $D$-twisted Higgs bundles, $\BA_D\coloneqq\BC^{r |D|-1}$ and
\[
\chi_D:M^\parab_{r,d} \to \BA_D
\]
is the map sending $(\CE, \theta, F)$ to the ordered collection of the eigenvalues of $\res_p \theta$ for all $p\in D$. Note that the sum of all the eigenvalues is always zero, hence the rank of $\BA_D$.

The map $\chi_D$ is smooth, i.e. it is a surjective submersion. In fact, $M^\parab_{r,d}$ is known to be a Poisson variety and the fibers of $\chi_D$ are precisely the symplectic leaves. Moreover, the image of the tangent map at every point is dual to the kernel of the Poisson tensor, so the dimension of the image of the tangent map is constant. From Corollary 1.3.3 in \cite{hausel2015cohomology} we deduce
\begin{prop}\label{prop:restriction iso}
For each $\Bmu\in \BA_D$ the restriction map $H^*(M^\parab_{r,d})\to H^*(\chi_D^{-1}(\Bmu))$ is an isomorphism.
\end{prop}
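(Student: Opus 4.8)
The plan is to deduce the isomorphism from the preceding proposition — all fibers of $\chi_D$ have isomorphic and pure cohomology — together with the tautological generation of $H^*(\ol M_{r,d})$ and a dimension count, rather than trying to establish local triviality of the non-proper map $\chi_D$ head-on. Concretely, I would show that the restriction map $\rho_{\Bmu}\colon H^*(\ol M_{r,d})\to H^*(\chi_D^{-1}(\Bmu))$ is surjective for every $\Bmu$, and that its source and target have equal (finite) dimension; a surjection of vector spaces of equal finite dimension is an isomorphism.

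For surjectivity I would argue as follows. The fiber $\chi_D^{-1}(\Bmu)$ is itself a moduli space of stable parabolic Higgs bundles, now with the residue eigenvalues pinned to $\Bmu$; by the preceding proposition its cohomology is pure, so Markman's argument \cite{markman2002generators} (exactly as in Sections \ref{ssec:elliptic} and \ref{ssec:twisted parabolic}) applies and shows that $H^*(\chi_D^{-1}(\Bmu))$ is generated by tautological classes. These tautological classes — the K\"unneth components of the Chern classes of the universal sheaf on $\ol M_{r,d}\times S$, together with the classes $y_{p,i}=c_1(L_{p,i})$ — are the restrictions of the corresponding global classes on $\ol M_{r,d}$, since the universal sheaf and the eigen-line-bundles $L_{p,i}$ restrict from $\ol M_{r,d}$ to $\chi_D^{-1}(\Bmu)$. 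Hence $\rho_{\Bmu}$ hits a generating set and is therefore surjective.

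It remains to match dimensions, and this is the main obstacle. By the preceding proposition it suffices to treat the central fiber, i.e. to show $\dim H^*(\ol M_{r,d})=\dim H^*(\chi_D^{-1}(0))$. Here I would use the scaling $\BC^*$-action, under which $\ol M_{r,d}$ is semiprojective \cite{hausel2015cohomology} and $\chi_D$ is equivariant for the weight-one linear action on $A_D$. Every $\BC^*$-fixed point has nilpotent residues, so the fixed locus lies in $\chi_D^{-1}(0)$, which is itself $\BC^*$-invariant with the same fixed locus. Comparing the Bialynicki--Birula (Morse) computations of the Poincar\'e polynomials of $\ol M_{r,d}$ and of $\chi_D^{-1}(0)$, the two differ only by the contribution of the normal directions to $\chi_D^{-1}(0)$ inside $\ol M_{r,d}$; along the fixed locus these map isomorphically, via the differential of the submersion $\chi_D$, onto $T_0 A_D$, on which $\BC^*$ acts with positive weight. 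Positive-weight normal directions are attracting and so do not change the Morse index of any fixed component; hence the two Poincar\'e polynomials coincide, and combining with the equality of fiber dimensions from the preceding proposition gives $\dim H^*(\ol M_{r,d})=\dim H^*(\chi_D^{-1}(\Bmu))$. Together with surjectivity this shows $\rho_{\Bmu}$ is an isomorphism. (Alternatively one could run the whole argument through the Leray spectral sequence of $\chi_D$: contractibility of $A_D=\BC^{r|D|-1}$ collapses it onto the edge map $\rho_{\Bmu}$, provided the sheaves $R^q\chi_{D*}\BC$ are locally constant — which is again the delicate point for the non-proper $\chi_D$ and is precisely what the $\BC^*$-equivariance and semiprojectivity supply.)
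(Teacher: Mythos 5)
Your second half --- the Bia{\l}ynicki--Birula comparison of $\ol M_{r,d}$ with $\chi_D^{-1}(0)$ --- is sound: the $\BC^*$-fixed locus lies in $\chi_D^{-1}(0)$ because $\chi_D$ is equivariant of weight one, the normal bundle of $\chi_D^{-1}(0)$ along the fixed locus is identified with $T_0A_D$ carrying only positive weights, so no fixed component changes its index and the Poincar\'e polynomials agree. But this computation is essentially the \emph{proof} of Corollary 1.3.3 of \cite{hausel2015cohomology}, which is exactly what the paper invokes; that corollary already asserts that the restriction map to every fiber of a $\BC^*$-equivariant map to $\BC$ (positive weight on the target) from a smooth semiprojective variety is an isomorphism, so iterating it over the coordinates of $A_D$ finishes the proof outright. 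In other words, the correct part of your argument, pushed slightly further (total space and central fiber share the same core, onto which both deformation retract; the non-central fibers are handled by the cited corollary), makes your surjectivity step unnecessary.

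That surjectivity step is where the genuine gap sits. You claim that Markman's argument applies to the fiber $\chi_D^{-1}(\Bmu)$ ``exactly as in Sections~\ref{ssec:elliptic} and~\ref{ssec:twisted parabolic}'', but in those sections it is applied to $M^{\parell}_{r,d,D}$ and to $\ol M_{r,d}$, neither of which has the residue eigenvalues pinned. The hard input to Markman's argument is not purity (which you do have from the preceding proposition) but the expression of the diagonal class in $\ol Y'\times Y$, for a suitable compactification $\ol Y'$ carrying a universal family, as a polynomial in tautological classes. For the constrained moduli problem ``parabolic Higgs bundles with $\res_p\theta$ having prescribed eigenvalues'' --- under the spectral correspondence, sheaves on the surface whose support is forced through prescribed points --- this is not established in \cite{markman2002generators} and is not a routine transcription; indeed the paper deliberately avoids it, deducing all properties of the fibers (e.g.\ of $M^0_{r,d}$ in the later subsections) from $\ol M_{r,d}$ via the very restriction isomorphism you are trying to prove. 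As written, your surjectivity step assumes something at least as strong as the proposition itself. Replace it by the $\BC^*$-equivariance argument above and the proof goes through.
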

In particular, the cohomology of all the fibers of $\chi_D$ are pure.
\begin{prop}\label{prop:elliptic injective}
    The restriction map $i^*:H^*(M^\parab_{r,d}) \to H^*(M^\parell_{r,d})$ is injective, i.e. $H^*( M^\parab_{r,d})=H^*_\pure(M^\parell_{r,d})$.
\end{prop}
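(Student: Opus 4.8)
The plan is to exploit Proposition \ref{prop:restriction iso}, which identifies $H^*(\ol M_{r,d})$ with the cohomology of a single fibre $\chi_D^{-1}(\Bmu)$, by choosing $\Bmu$ so that this entire fibre lies inside the open subvariety $M_{r,d}\subset\ol M_{r,d}$. Recall that $M_{r,d}=M_{r,d,D}^\parell$ is the locus in $\ol M_{r,d}$ where the residue of $\theta$ at each $p\in D$ has distinct eigenvalues and the spectral curve is reduced and irreducible; over the distinct-eigenvalue locus a complete flag preserved by $\res_p\theta$ is the same datum as an ordering of the eigen-lines, so $M_{r,d}$ is genuinely an open subvariety of $\ol M_{r,d}$.

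First I would choose $\Bmu\in A_D$ with all components distinct, so that every Higgs bundle in $\chi_D^{-1}(\Bmu)$ has regular semisimple residues; then the points of the spectral curve over the divisor at infinity above $D$ are exactly these eigenvalues and are distinct, and the curve is automatically reduced, since a repeated factor of the characteristic polynomial would force a repeated eigenvalue at each $p$. The main point, and the principal obstacle, is to arrange that $\Bmu$ also forces every such spectral curve to be irreducible. If the spectral curve $\Sigma$ of a bundle in the fibre splits as $\Sigma_1\cup\Sigma_2$ with $\Sigma_1\to C$ of degree $r_1$, $1\le r_1\le r-1$, then over each $p\in D$ the $r$ eigenvalues partition into a subset $S(p)$ of size $r_1$ lying on $\Sigma_1$, and the coefficient of $\lambda^{r_1-1}$ in the monic equation of $\Sigma_1$, being minus the trace of its sheets, is a global section of $\Omega(D)$ whose residue at $p$ is $\sum_{j\in S(p)}\mu_{p,j}$. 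By the residue theorem the sum of these residues over all $p$ vanishes, giving a linear condition $\sum_{p}\sum_{j\in S(p)}\mu_{p,j}=0$ on $\Bmu$. Since $1\le r_1\le r-1$, this is not proportional to the defining relation $\sum_{p,j}\mu_{p,j}=0$ of $A_D$, hence cuts out a proper hyperplane; as there are only finitely many choices of the partitions $S(\cdot)$, a generic $\Bmu$ avoids all of them and no splitting is possible. For such $\Bmu$ the whole fibre $\chi_D^{-1}(\Bmu)$ consists of elliptic bundles, that is $\chi_D^{-1}(\Bmu)\subseteq M_{r,d}$.

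With this $\Bmu$ fixed, the fibre $\chi_D^{-1}(\Bmu)$ is a closed subvariety of the open set $M_{r,d}$, so the restriction of Proposition \ref{prop:restriction iso} factors as
\[
H^*(\ol M_{r,d}) \To H^*(M_{r,d}) \To H^*(\chi_D^{-1}(\Bmu)).
\]
The composite is an isomorphism, hence the first arrow is injective, which is the desired injectivity of the restriction map.

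Finally, to identify the image with $H^*_\pure(M_{r,d})$, I would use that $H^*(\ol M_{r,d})=H^*_\taut(\ol M_{r,d})$ is generated by tautological classes, and that these restrict to tautological classes of $M_{r,d}$: the classes $\psi_k(\pi)$ come from the universal sheaf, which is already defined on $\ol M_{r,d}$, while each $y_{p,i}$ is the restriction of $c_1(F_p^i/F_p^{i-1})$ for the flag $F_p$ on $\ol M_{r,d}$, the ordering of the flag matching the ordering of eigen-lines under the embedding $M_{r,d}\hookrightarrow\ol M_{r,d}$. Hence the image is exactly $H^*_\taut(M_{r,d})$, which by the elliptic case of Section \ref{ssec:elliptic} equals $H^*_\pure(M_{r,d})$. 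Combined with injectivity this yields $H^*(\ol M_{r,d})=H^*_\pure(M_{r,d})$.
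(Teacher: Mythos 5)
Your proof is correct and follows essentially the same route as the paper: choose $\Bmu$ generic, use the residue theorem applied to a trace (you phrase it via the coefficient of $\lambda^{r_1-1}$ of a factor of the characteristic polynomial, the paper via $\trace(\theta)$ on a $\theta$-invariant subbundle, i.e.\ simplicity) to show $\chi_D^{-1}(\Bmu)\subset M_{r,d}$, and then factor the isomorphism of Proposition \ref{prop:restriction iso} through $H^*(M_{r,d})$. Your closing paragraph identifying the image with $H^*_\pure(M_{r,d})$ via tautological generators is a correct elaboration of what the paper leaves implicit from Sections \ref{ssec:elliptic} and \ref{ssec:twisted parabolic}.
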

\begin{proof}
For a generic (in the sense of~\cite[Def.~2.1.1]{hausel2011arithmetic}) $\Bmu\in \BA_D$, the identity
    \[
    \sum_{p} \res_p \trace(\theta)=0
    \]
implies that any parabolic Higgs bundle in $(\CE,\theta, (F_p)_p) \in \chi_D^{-1}(\Bmu)$ is simple, which in turn implies that the spectral curve of $(\CE,\theta)$ is reduced and irreducible, hence $\chi_D^{-1}(\Bmu)\subset M^\parell_{r,d}$. The composition
    \[
    H^*(M^\parab_{r,d}) \xrightarrow{i^*} H^*(M^\parell_{r,d}) \to H^*(\chi_D^{-1}(\Bmu))
    \]
    is an isomorphism by Proposition~\ref{prop:restriction iso}, and so $i^*$ is injective.
\end{proof}

We deduce from~\eqref{eq:pure = tauto} that $i^*$ induces an isomorphism of tautological rings $H^*(M^\parab_{r,d}) \simeq H^*_\taut(M^\parell_{r,d})$. By Theorem~\ref{thm:decomposition}, $i^*$ preserves the perverse filtrations and becomes a map of Lefschetz structures once a relatively ample class $\alpha \in H^2(M^\parab_{r,d})$ has been chosen. It therefore has to be an isomorphism of Lefschetz structures on $H^*(M^\parab_{r,d}) \simeq H^*_\taut(M^\parell_{r,d})$. Theorem~\ref{prop:PW elliptic} implies 

\begin{cor} 
   The $P=C$ conjecture holds for $H^*(M^\parab_{r,d})$, i.e. $P_mH^*(M^\parab_{r,d})$ is the span of products of tautological classes $\prod_i \psi_{m_i}(\xi_i)$ satisfying $\sum_i m_i\leq m+N$, where $N=(g-1)r^2+ |D|\binom{r}{2} +1$.
\end{cor}

\subsection{Nilpotent parabolic case}\label{ssec:nil-par}
Let us now denote by $M_{r,d}^{\parab,0}\coloneqq\chi_D^{-1}(0)\subset M^\parab_{r,d}$ the moduli space parameterizing stable nilpotent parabolic Higgs bundles. The restriction of the Hitchin morphism to $M_{r,d}^{\parab,0}$ induces the perverse filtration on $H^*(M_{r,d}^{\parab,0})$. Recall that by Proposition \ref{prop:restriction iso}, the restriction map $H^*(M^\parab_{r,d})\to H^*(M_{r,d}^{\parab,0})$ is an isomorphism.
\begin{prop}
The restriction isomorphism $H^*(M^{\parab}_{r,d})\xrightarrow{\sim} H^*(M_{r,d}^{\parab,0})$ identifies the perverse filtrations on two sides.
\end{prop}
\begin{proof}
    For simplicity, let us write $X=M_{r,d}^{\parab}$, $X_0 = M_{r,d}^{\parab,0}$, $N=\dim X$, $N_0=\dim X_0$, and $n=N-N_0$. We have $X_0=\chi^{-1} (\BA_0)$ for $\BA_0=\{0\} \times \BA^{\parab,0}\subset \BA^\parab$ a linear subspace of codimension $n$.
    
    Let $\ol X$ be the projectivization with respect to the $\BC^*$-action on $X$ (see \cite{hausel1998compactification}):
    \[
    \ol X = \left(X \times \BC \setminus \chi^{-1}(0)\times \{0\}\right) / \BC^*,
    \]
    and let $\partial X = \ol X\setminus X$. The spaces $\ol X$ and $\partial X$ are projective. They are not necessarily smooth, but the singularities are finite quotient singularities, so they are rationally smooth. In particular, their cohomology is pure, and may be identified with their Borel-Moore homology. We define $\ol X_0$ in an analogous fashion. The Hitchin map extends to a projective morphism $\ol \chi:\ol X\to \ol \BA$, where $\ol \BA\coloneqq \left(\BA \times \BC \setminus \{0\}\right)/ \BC^*$ is the corresponding weighted projective space. This map restricts to $\ol \chi_0: \ol X_0 \to \ol \BA_0$, where $\ol \BA_0 \subset \ol \BA$ is the projective subspace associated to $\BA_0$. Let $L\in H^2(\ol X)$, resp. $L_0 \in H^2(\ol X_0)$, be the pullback of the hyperplane class from $\ol \BA$.  
     
    \begin{lem}We have
    \[
    H^*(X) = H^*(\ol X)/L H^*(\ol X), \qquad H^*(X_0) = H^*(\ol X_0)/L_0 H^*(\ol X_0)
    \]
    \end{lem}
    \begin{proof}
    Let $U= \ol X\setminus\chi^{-1}(0)$. Since $U$ is the total space of a line bundle over $\partial X$, its cohomology is pure, so the restriction map
    \[
    H^*(\ol X) \to H^*(U) = H^*(\partial X)
    \]
    is surjective. The kernel of this map is identified with the homology of $\chi^{-1}(0)$, and therefore $H_*(\chi^{-1}(0))$. Since $\chi$ is proper, $H^*(\chi^{-1}(0))$ is pure by duality. The restriction map $H^*(X)\to H^*(\chi^{-1}(0))$ is an isomorphism because the $\BC^*$-action retracts $X$ onto $\chi^{-1}(0)$. Hence $H^*(X)$ is pure, which implies the surjectivity of the restriction map $H^*(\ol X)\to H^*(X)$. By purity,  the long exact sequence in Borel-Moore homology for $X\subset \ol X$ yields the collection of short exact sequences
    \[
    0\to H^{*-2}(\partial X) \to H^*(\ol X) \to H^*(X) \to 0.
    \]
    The composition $H^*(\ol X) \to H^*(\partial X) \to H^{*+2}(\ol X)$ is the multiplication by the fundamental class $[\partial X]=L$. We obtain a short exact sequence
    \[
    H^{*-2}(\ol X)\xrightarrow{L} H^{*}(\ol X) \to H^*(X) \to 0,
    \]
    as claimed. By dualizing this sequence and using the Poincar\'e duality we get a longer exact sequence:
    \[
    0\to H_{2N-*+2}(X) \to H^{*-2}(\ol X)\xrightarrow{L} H^{*}(\ol X) \to H^*(X) \to 0.
    \]
Finally, the same arguments apply for $X_0, \ol X_0$. 
\end{proof}

Let $\iota:\ol X_0\to \ol X$ be the closed embedding. The operators $\iota_*, \iota^*$ both commute with the multiplication by $L$, i.e. $\iota^* L =L_0 \iota^*$ and $\iota_* L_0=L \iota_*$. The composition $\iota_*\iota^*$ is the operator of multiplication by $[\ol X_0]\in H^{2n}(\ol X)$. On the other hand, since $\ol X_0 = \ol\chi^{-1} (\ol \BA_0)$ and $\ol \BA_0$ is a projective subspace, we have $L^n = c [\ol X_0]$ where $c>0$ is the multiplicity. So we have
    \[
    \iota_* \iota^* = c^{-1} L^n.
    \]

    We claim that $\image \iota_* \subset \image L^n$. To see this, let $\alpha = \iota_* \beta$, where $\beta\in H^*(\ol X_0)$. Using the isomorphism
    \[
    \iota^*:H^*(\ol X)/L H^*(\ol X) \to H^*(\ol X_0)/L_0 H^*(\ol X_0),
    \]
    we can lift $\beta$ and write $\beta = \iota^* \beta' + L_0 \beta''$. This implies
    \[
    \alpha = c^{-1} L^n \beta' + L \iota_*\beta''.
    \]
    Let $\alpha' = \iota_*\beta''$. Repeating the argument for $\alpha'$ and continuing in this fashion $n$ times we obtain $\alpha\in L^n H^*(\ol X)$.

    Next, we claim that for any $i\geq 0$ we have $(\iota^*)^{-1} (\kernel L_0^{i} + \image L_0)\subset \kernel L^{i+n} + \image L$. Indeed, suppose $\alpha\in H^*(\ol X)$ is such that
    \[
    \iota^* \alpha = \beta + L_0 \beta',\qquad L_0^{i}\beta = 0.
    \]
    Applying $\iota_*$ and using the previous statement we obtain
   $ c^{-1} L^n \alpha = \iota_* \beta + L \iota_*\beta'$
    hence
   $ c^{-1} L^n \alpha = \iota_*\beta + L^{n+1} \beta''$
    and
    \[
    L^{i+n}(\alpha - c L\beta'') = c \iota_* L^i \beta = 0,
    \]
    which shows that $\alpha\in \kernel L^{i+n} + \image L$.

    Now suppose $\alpha\in P_i H^j(X_0)$. By Theorem~\ref{thm:decomposition} this means that $\alpha$ can be represented in $H^*(\ol X_0)$ by an element of $W_{i+{N_0-j}}\subset \kernel L^{i+N_0-j+1} + \image L$. By the above, inside $H^*(\ol X)$ the same element $\alpha$ can be represented by an element of $\kernel L^{i+N-j+1}$, which implies $\alpha\in P_i H^j(X)$. Thus, the inverse of the pullback map $\iota^*:H^*(X_0)\to H^*(X)$ preserves the perverse filtration. Choosing a relatively ample line class $\omega \in H^2(X)$, we obtain a map of Lefschetz structures, which is an isomorphism of vector spaces, hence also an isomorphism of Lefschetz structures.
\end{proof}

\begin{cor}\label{cor:parabolic}
    The $P=C$ conjecture holds for $H^*(M_{r,d}^{\parab,0})$, i.e. $P_mH^*(M^{\parab,0}_{r,d})$ is the span of products of tautological classes $\prod_i \psi_{m_i}(\xi_i)$ satisfying $\sum_i m_i\leq m+N$, where $N=(g-1)r^2+ |D|\binom{r}{2} +1$.
\end{cor}

\begin{rem}
From $H^*(X)\cong H^*(\chi^{-1}(0))\cong H^*(X_0)$ we obtain another proof of the fact that the restriction $H^*(M^{\parab}_{r,d})\to H^*(M^{\parab,0}_{r,d})$ is an isomorphism.
\end{rem}

\subsection{The classical case}\label{sec:classical case}
We now make the assumption that $\text{gcd}(r,d)=1$. Let $M_{r,d}$ be the moduli space of stable Higgs bundles of rank $r$ and degree $d$. Choose a point $p\in C$ and let $D=(p)$. Let $M_{r,d}^{\parab,0}$ be the moduli space of stable nilpotent $D$-parabolic Higgs bundles $(\CE,\theta, (F_p))$, with respect to a generic stability parameter. Let $\iota:\wt M^{\parab,0}_{r,d}\hookrightarrow M^{\parab,0}_{r,d}$ be the closed subvariety defined by the condition $\res_p\theta=0$. Then we have the forgetful morphism $\pi:\wt M^{\parab,0}_{r,d}\to M_{r,d}$, which turns $\wt M^{\parab,0}_{r,d}$ into a relative flag variety over $M_{r,d}$ (indeed, a parabolic bundle $(\CE,\theta,(F_p))$ satisfying $\mathrm{res}_p(\theta)=0$ is stable if and only if the pair $(\CE,\theta)$ is stable in the usual sense). The codimension of $\wt M^{\parab,0}_{r,d}$ in $M_{r,d}^\parab$ equals to $\binom{r}{2}$. The relative dimension of $\wt M_{r,d}^{\parab,0}$ over $M_{r,d}$ is also $\binom{r}2$. Thus we have maps
\[
A=\pi_* \iota^*: H^*(M_{r,d}^{\parab,0}) \to H^{*-2\binom{r}2}(M_{r,d}),\qquad B=\iota_* \pi^*: H^*(M_{r,d}) \to H^{*+2\binom{r}2}(M_{r,d}^{\parab,0}).
\]
Let $\chi:M_{r,d}\to \BA$, $\chi^\parab:M^{\parab,0}_{r,d}\to \BA^{\parab,0}$ be the corresponding Hitchin maps, which are both projective. The space $\BA$ is identified with a linear subspace of $\BA^{\parab,0}$ so we may view both Hitchin morphisms as taking values in $\BA^{\parab,0}$.
Applying Proposition~\ref{prop:functoriality}, we see that both $A$ and $B$ preserve the perverse filtrations. Let
\[
\Delta \coloneqq \prod_{1\leq i<j\leq r} (y_{p,i} - y_{p,j})\in H^{2\binom{r}2}(M_{r,d}^{\parab,0}).
\]
We have $\pi_*(\Delta) = \pm r!$ because the class $\Delta$ is, up to a sign, the Euler class of the relative tangent bundle of $\wt M^{\parab,0}_{r,d}\to M_{r,d}$. The composition $\iota^* \iota_*$ is the multiplication by the Euler class of the normal bundle of $\wt M^{\parab,0}_{r,d}$ in $M_{r,d}^{\parab,0}$, which is also given by $\Delta$ up to a sign. 
So we obtain
\[
A B = \pi_* \iota^* \iota_* \pi^* = \pi_* \Delta \pi^* = \pm r!.
\]
As $\wt M^{\parab,0}_{r,d}\subset M^{\parab,0}_{r,d}$ is the zero set of a vector bundle with Euler class $\pm \Delta$, $\Delta$ has to be a multiple of the fundamental class $[\wt M^{\parab,0}_{r,d}]\in H^{2\binom{r}2}(M_{r,d}^{\parab,0})$, and from $\iota^*\iota_*=\pm\Delta$ we obtain $[\wt M^{\parab,0}_{r,d}]=\pm \Delta$.
\begin{thm}\label{thm:PW classical}
    $P=C$ holds for $M_{r,d}$, i.e the subspace $P_m H^*(M_{r,d})$ is the span of products $\prod_i \psi_{m_i}(\xi_i)$ satisfying $\sum_i m_i\leq m+N$, where $N=(g-1)r^2 +1$.
\end{thm}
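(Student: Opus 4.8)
The plan is to transfer the statement from the parabolic moduli space $X^\parab$ (which is the space $M_{r,d}^0$ for $D=(p)$, and for which $P=W$ is already known by Corollary \ref{cor:parabolic}) to $X$ by means of the two maps $A=\pi_*\iota^*$ and $B=\iota_*\pi^*$. The crucial structural input is the identity $AB=\pm r!$ established above, which shows that $B$ is split injective with left inverse $\tfrac{1}{\pm r!}A$. Since both $A$ and $B$ preserve the perverse filtration (Proposition \ref{prop:functoriality}, applied after replacing the Hitchin base of $X$ by that of $X^\parab$ as noted above), this already yields strict compatibility:
\[
\alpha\in P_m H^*(X)\iff B(\alpha)\in P_m H^*(X^\parab).
\]
The forward implication is just preservation of $P$ by $B$; the backward one follows by writing $\alpha=\tfrac{1}{\pm r!}AB(\alpha)$ and using preservation of $P$ by $A$.

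Write $W^{X}_m$ for the span of products $\prod_i\psi_{m_i}(\xi_i)$ with $\sum_i m_i\le m+N$, and $W^{\parab}_m$ for the analogous span inside $H^*(X^\parab)$ built from the enlarged tautological ring (including the classes $y_{p,i}=\psi_1(p_i)$), with $N$ replaced by $N^\parab$. By Corollary \ref{cor:parabolic} we have $P_m H^*(X^\parab)=W^{\parab}_m$, and the dimension count $\dim X^\parab=\dim X+2\binom{r}{2}$ gives $N^\parab=N+\binom{r}{2}$ (equivalently this can be read off from $B(1)=\iota_*(1)=[\wt X]=\pm\Delta$). I would next record two compatibilities between these tautological filtrations. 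Since the universal bundle on $\wt X$ is simultaneously pulled back from $X$ along $\pi$ and restricted from $X^\parab$ along $\iota$, we have $\pi^*\psi^{X}_m(\xi)=\iota^*\psi^{X^{\parab}}_m(\xi)$ for $\xi\in H^*(C)$; hence by the projection formula
\[
B\Big(\prod_i\psi^{X}_{m_i}(\xi_i)\Big)=\pm\Big(\prod_i\psi^{X^{\parab}}_{m_i}(\xi_i)\Big)\cup\Delta,
\]
and since $\Delta$ has tautological weight $\binom{r}{2}$ this shows $B(W^{X}_m)\subseteq W^{\parab}_m$. Combined with the strict compatibility above, $W^{X}_m\subseteq P_m H^*(X)$, which is the easy inclusion.

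For the reverse inclusion I would show that $A$ sends $W^{\parab}_m$ into $W^{X}_m$. The restriction $\iota^*y_{p,i}$ is the first Chern class of the $i$-th tautological line of the relative full-flag bundle $\pi:\wt X\to X$; thus $\iota^*$ of an enlarged tautological monomial becomes $\pi^*$ of a tautological class on $X$ times a polynomial in these Chern roots, and the Gysin map $\pi_*$ turns the latter into a symmetric function in the roots, i.e. a polynomial in $c_k(\CE(p))$ — which are tautological classes $\psi(\omega)$ on $X$. As $\pi_*$ lowers the tautological weight by exactly $\binom{r}{2}$ (consistently with $\pi_*\Delta=\pm r!$), one gets $A(W^{\parab}_m)\subseteq W^{X}_m$. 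Now for $\alpha\in P_m H^*(X)$ we have $B(\alpha)\in P_m H^*(X^\parab)=W^{\parab}_m$, whence $\pm r!\,\alpha=AB(\alpha)\in W^{X}_m$, so $P_m H^*(X)\subseteq W^{X}_m$. Together with the previous paragraph this gives $P_m H^*(X)=W^{X}_m$, which is the assertion.

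The main obstacle is this last step: controlling the Gysin map $A$ on the enlarged tautological ring. Concretely one must identify $\iota^*y_{p,i}$ with the flag-bundle Chern roots, compute $\pi_*$ of arbitrary polynomials in them via the standard flag-variety pushforward, verify that the outputs lie in the non-enlarged tautological ring generated by the $\psi_m(\omega)$, and check that the tautological weight drops by precisely $\binom{r}{2}$; the bookkeeping $N^\parab=N+\binom{r}{2}$ must match this drop, which is why the dimension count is needed. The inclusion $B(W^{X}_m)\subseteq W^{\parab}_m$ and the strict $P$-compatibility of $B$ are, by contrast, immediate consequences of the projection formula and of $AB=\pm r!$.
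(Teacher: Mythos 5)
Your proposal is correct and follows essentially the same route as the paper: transfer along $A=\pi_*\iota^*$ and $B=\iota_*\pi^*$ using $AB=\pm r!$, $[\wt X]=\pm\Delta$, the identity $N^{\parab}=N+\binom{r}{2}$, and $P=W$ for $X^{\parab}$. The only divergence is in the reverse inclusion, where you compute $A$ directly on the enlarged tautological ring via the flag-bundle Gysin pushforward, while the paper antisymmetrizes over $S_r$ and factors out $\Delta$ inside $H^*(X^{\parab})$ before lifting along $B$ and applying $A$ --- these are the same computation, since the flag pushforward is antisymmetrization divided by the Vandermonde.
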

\begin{proof}
    Let $-N'$ be the perversity of $1\in H^0(M^{\parab,0}_{r,d})$. We have $\pm r! = A(\Delta)\in P_{-N'+\binom{r}2} H^0(M_{r,d})$, so $-N\leq -N'+\binom{r}2$. Conversely, $\Delta = B(1) \in P_{-N}$. If $-N< -N'+\binom{r}2$, since $\Delta$ is a non-zero $\frakh$-homogeneous element of weight $-N'+\binom{r}2$, we have 
    \[
    N' = N + \binom{r}2.
    \]
    Consider a product of the form $f = \prod_i \psi_{m_i}(\xi_i)\in H^*(M_{r,d})$. Then
    \[
    B(f) = \pm \Delta f\in P_{\sum_i m_i -N'+\binom{n}2} = P_{\sum_i m_i -N}\quad\Rightarrow\quad f = \pm\frac{1}{r!}ABf\in P_{\sum_i m_i -N},
    \]
    so that $C\subset P$.
    Conversely, let $f\in P_m H^*(M_{r,d})$. By Markman's theorem, $f$ can be explicitly written as a polynomial in tautological classes. Write
    \[
    B(f) = \Delta f \in P_m H^*(M^{\parab,0}_{r,d}).
    \]
    By $P=C$ for $M^{\parab,0}_{r,d}$ we can write
    \[
    \Delta f = \sum_k \lambda_k g_k\qquad(\lambda_k\in\BC),
    \]
    where each $g_k$ is a monomial in tautological classes of the form $\prod_i \psi_{m_i}(\xi_i)$ with $\sum_i m_i\leq N'+m$. Using the identification of $H^*(M^{\parab,0}_{r,d})$ with the pure part of the cohomology of the corresponding moduli space $M^{\parell}_{r,d}$ we see that the symmetric group $\mathfrak{S}_r$ acts on $H^*(M^{\parab,0}_{r,d})$ permuting the generators $y_{p,r} = \psi_1(\tau_{p,r})$. Let $\ASym:H^*(M^{\parab,0}_{r,d}) \to H^*(M^{\parab,0}_{r,d})$ be the corresponding antisymmetrization operator,
    \[
    \ASym = \frac{1}{r!} \sum_{\sigma\in \mathfrak{S}_r} (-1)^{|\sigma|} \sigma.
    \]
    We obtain
    \[
    \Delta f = \sum_k \lambda_k \ASym(g_k).
    \]
    Each $\ASym(g_k)$ is explicitly a product of $\Delta$ and a linear combination of monomials of the form $\prod_i \psi_{m_i}(\xi_i)$ with $\sum_i m_i\leq N+m$ not containing the generators $y_{p,i}$. Therefore we can lift each of these and write
    \[
    B(f) = \Delta f = B\left(\sum_k \lambda_k' g_k'\right),\qquad \lambda_k'\in\BC,
    \]
    where each $g_k$ is a monomial in tautological classes of the form $\prod_i \psi_{m_i}(\xi_i)$ with $\sum_i m_i\leq N+m$.
    Applying $A$ we see that
    \[
    f = \sum_k \lambda_k' g_k',
    \]
    so the converse inclusion $P\subset C$ is proved.
\end{proof}

\begin{rem}
The number $N$, i.e. the negative of the perversity of $1$ in the above statements is always the dimension of the fibers of the Hitchin map.
\end{rem}

\begin{rem}
	It is clear from the above proof that $H^*(M_{r,d})$ is the anti-invariant part of $H^*(\wt M^{\parab,0}_{r,d})$. The corresponding statement on the Betti side was established in \cite{mellit2019cell}. The anti-invariant part is clearly preserved by the operators $\wt D_{m,n}(\xi)_\red$ for $\xi\in H$; in particular, we have an action of $\CH_2$ on $H^*(M_{r,d})[x,y]$ as in Corollary \ref{cor:rational}.
\end{rem}

\subsection{The weight filtration}\label{ssec:weight}
Finally, let $X_{r,d}$ be the Betti moduli space, i.e. the character variety for a compact or non-compact curve $C$ with generic local monodromies, see \cite{hausel2011arithmetic}. There is a decomposition turning $H^*(X_{r,d})$ into a graded ring
\[
H^*(X_{r,d}) = \bigoplus_i W_{2i} H^*(X_{r,d}) \cap F^i H^*(X_{r,d}),
\]
where $W$ is the weight filtration and $F$ is the Hodge filtration. The classes $f\in W_{2i} H^*(X_{r,d}) \cap F^i H^*(X_{r,d})$ are said to be of \emph{pure weight} $i$. In the case of a compact curve it is explained in \cite{shende2016weights} that the standard tautological classes are of pure weight.
More precisely, let $\CE$ be the universal (topological) vector bundle on $X_{r,d}\times C$; then the Chern classes $e'_k(\xi) \coloneqq \int_C c_k(\CE)\cup\gamma$ are of pure weight $k$ for all $\gamma \in H^*(C)$.
The non-abelian Hodge theory provides a homeomorphism $X_{r,d} \simeq M_{r,d}$ which is compatible with the universal (topological) vector bundle.
In particular, the tautological classes above can be seen as Chern characters of the universal vector bundle $E$ on $M_{r,d}\times C$ instead.

\begin{lem}\label{lem:psi-GRR}
    We have $\psi_n(\gamma) = \frac{p'_n(\gamma)}{n+1} + \eps$, where $\eps$ is a linear combination of $p'_{i}(\gamma')$ with $i < n$, $\gamma'\in H^*(C)$.
\end{lem}
\begin{proof}
    Let $S = \mathrm{P}_C(\Omega(D) \oplus \CO)$, and $\pi:S\to C$ the natural projection.
    Recall that the tautological sheaf $F$ on $M_{r,d}\times S$, is related to the tautological vector bundle $E$ via $\pi_*(F) = E$.
    The relative Todd class of $\pi$ is given by $\Td_\pi = 1 + (\xi+(g-1)\omega)$, see Remark~\ref{rmk:ci-from-rd} for the notations.
    In particular, by Grothendieck-Riemann-Roch and projection formula we have
    \begin{align*}
        p'_k(\gamma) & = \int_C \ch_k (E)\cup \gamma = \int_S \ch_{k+1}(F)\cup \pi^*\gamma + \int_S \ch_k(F)\cup (\xi+(g-1)\omega)\cup \pi^*\gamma\\
        & = p_{k+1}(\pi^*\gamma) + p_k((\xi+(g-1)\omega)\cup \pi^*\gamma).
    \end{align*}
    On the other hand, by the definition~\eqref{eq:psi definition} of $\psi_n(\gamma)$ we have 
    \[
        \psi_n(\gamma) = \frac{p_{n+1}(\gamma)}{n+1} + \sum_{1\leq i\leq n} \frac{n!}{i!}p_i(\Td_{n+1-i}\gamma) = \frac{p'_n(\gamma)}{n+1} + \ldots,
    \]
    where dots stand for a linear combination of terms $p'_{i}(\gamma')$ with $i < n$,  $\gamma'\in H^*(C)$.
\end{proof}

Shende's result was extended to the parabolic character varieties in~\cite{mellit2019cell}.
Since the parabolic tautological classes are already expressed in terms of the vector bundle $E$, see Definition~\ref{def:Hrd} and the discussion afterwards, the analogue of Lemma~\ref{lem:psi-GRR} (without the factor $\frac{1}{n+1}$) holds for $\gamma \in H_{r,D}$.
Thanks to relations between Chern classes and Chern characters induced by~\eqref{eq:prod-of-sym-fts}, we see that
\begin{align*}
    \mathrm{Span} & \left\{\prod_i e'_{n_i}(\gamma_i) : \sum_i n_i \leq n\right\} = 
    \mathrm{Span}\left\{\prod_i p'_{n_i}(\gamma_i) : \sum_i n_i \leq n\right\} \\
    &=\mathrm{Span}\left\{\prod_i \psi_{n_i}(\gamma_i) : \sum_i n_i \leq n\right\}.
\end{align*}
Altogether, we obtain

\begin{prop}
The subspace $W_{2m} H^*(X_{r,d})$ is the span of products $\prod_i \psi_{m_i}(\xi_i)$ satisfying $\sum_i m_i\leq m$.	\qed
\end{prop}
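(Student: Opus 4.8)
The plan is to introduce the ``$\psi$-weight filtration'' $W'_\bullet$, where $W'_{2m}$ is the span of all products $\prod_i \psi_{m_i}(\xi_i)$ with $\sum_i m_i\le m$, and to prove $W'_{2m}=W_{2m}$ for all $m$. The filtration $W'_\bullet$ is exhaustive because the classes $\psi_n(\xi)$ generate $H^*(X)$ (under the non-abelian Hodge identification this is Markman's argument, exactly as in Section \ref{ssec:elliptic}), and it is multiplicative by construction: $W'_{2a}W'_{2b}\subset W'_{2(a+b)}$. The whole point is that both $W'_\bullet$ and $W_\bullet$ are multiplicative exhaustive filtrations sharing the same generators, so the content is a triangular change-of-generators comparison rather than anything about the perverse filtration; in particular this proof stays entirely on the Betti side.

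For the inclusion $W'_{2m}\subset W_{2m}$, recall that cup product is a morphism of mixed Hodge structures, so $W_\bullet$ is multiplicative as well. First I would show $\psi_n(\xi)\in W_{2n}$ by induction on $n$. The pure-weight statement recalled just above the proposition (from \cite{shende2016weights} in the compact case and \cite{mellit2019cell} in the parabolic case) expresses $\psi_n(\xi)$ as a class of pure weight $n$, which lies in $W_{2n}$ by definition, plus a linear combination of products $\prod_i \psi_{n'_i}(\xi'_i)$ with $\sum_i n'_i<n$; by the inductive hypothesis together with multiplicativity, each such product lies in $W_{2(n-1)}\subset W_{2n}$. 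Multiplicativity of $W_\bullet$ then yields $\prod_i \psi_{m_i}(\xi_i)\in W_{2\sum_i m_i}\subset W_{2m}$ whenever $\sum_i m_i\le m$, which is the desired inclusion.

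For the reverse inclusion I would invoke the full description of the weight filtration from \cite{shende2016weights} and \cite{mellit2019cell}: $W_{2m}$ is spanned by products of \emph{pure-weight} tautological generators of total weight $\le m$. Writing $\sigma_n(\xi)$ for such a generator of weight $n$, the same pure-weight statement says precisely that $\sigma_n(\xi)\equiv\psi_n(\xi)\pmod{W'_{2n-2}}$, and since $\psi_n(\xi)\in W'_{2n}$ we conclude $\sigma_n(\xi)\in W'_{2n}$. Multiplicativity of $W'_\bullet$ then places every spanning product $\prod_i \sigma_{m_i}(\xi_i)$ with $\sum_i m_i\le m$ in $W'_{2m}$, giving $W_{2m}\subset W'_{2m}$ and hence $W_{2m}=W'_{2m}$. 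The only genuine difficulty lies in the pure-weight statement itself, namely in unwinding the Todd correction in \eqref{eq:psi definition} to verify that the change of generators between the $\psi_n(\xi)$ and the pure-weight classes $\sigma_n(\xi)$ is upper-triangular with respect to total weight with invertible leading part; granting the cited results, this bookkeeping is the main obstacle and is precisely what \cite{shende2016weights} and \cite{mellit2019cell} supply.
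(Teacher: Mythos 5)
Your proposal is correct and follows essentially the same route as the paper, which itself only sketches the argument: it cites the description of $W_\bullet$ as spanned by products of pure-weight tautological classes (from \cite{shende2016weights}, extended to the parabolic case in \cite{mellit2019cell}), notes that each $\psi_n(\xi)$ agrees with a pure-weight class modulo products $\prod_i\psi_{n_i'}(\xi_i')$ with $\sum_i n_i'<n$, and concludes by the same triangular change of generators combined with multiplicativity of the weight filtration. Your two-inclusion writeup is simply a more explicit version of that same argument, resting on the same cited inputs.
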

Therefore the $P=C$ statements in Corollary~\ref{cor:parabolic} and Theorem~\ref{thm:PW classical} are equivalent to the claim $P_i=W_{2(i+N)}$.
\begin{cor}
    The $P=W$ conjecture holds both in classical and parabolic setup.\qed
\end{cor}

\section*{Acknowledgments}
We are thankful to Mark Andrea de Cataldo, Ben Davison, Eugene Gorsky, Lothar G\"ottsche, Andrei Negu{\c{t}}, Alexei Oblomkov, Lev Rozansky, Kostya Tolmachov, Eric Vasserot for useful discussions.

Tam\'as Hausel was partially supported by FWF grant “Geometry of the tip of the global nilpotent cone” no. P 35847.
Anton Mellit is supported by the consolidator grant No. 101001159 ``Refined invariants in combinatorics, low-dimensional topology and geometry of moduli spaces'' of the European Research Council.
Alexandre Minets was supported by the starter grant ``Categorified Donaldson-Thomas Theory'' No. 759967 of the European Research Council. 
Olivier Schiffmann is partially supported by the PNRR grant CF 44/14.11.2022 and would like to thank the Simion Stoilow Institute of Mathematics of the Romanian Academy for its hospitality and great working conditions. 

\bibliographystyle{amsalpha}

\end{document}